\newtheorem{theorem}{Theorem}[section]
\newtheorem{lemma}[theorem]{Lemma}
\newtheorem{conjecture}[theorem]{Conjecture}
\newtheorem{proposition}[theorem]{Proposition}
\newtheorem{corollary}[theorem]{Corollary}
\newtheorem{observation}[theorem]{Observation}
\theoremstyle{definition}\newtheorem{definition}[theorem]{Definition}
\theoremstyle{definition}\newtheorem{example}[theorem]{Example}
\theoremstyle{definition}\newtheorem{remark}[theorem]{Remark}
\newcommand{\rank}{\operatorname{rank}}
\newcommand{\spann}{\operatorname{span}}
\newcommand{\Iso}{\operatorname{Isom}}
\newcommand{\conv}{\operatorname{conv}}
\newcommand{\sgn}{\operatorname{sgn}}
\newcommand\scalemath[2]{\scalebox{#1}{\mbox{\ensuremath{\displaystyle #2}}}}
\definecolor{colR}{rgb}{.932,.172,.172}
\definecolor{colB}{rgb}{.255,.41,.884}
\definecolor{colG}{rgb}{0,0.7,0}
\tikzstyle{vertex}=[circle, draw, fill=black, inner sep=0pt, minimum size=4pt]
\tikzstyle{smallvertex}=[circle, line width=1.5pt, draw, fill=black, inner sep=0pt, minimum size=2pt]
\tikzstyle{edge}=[line width=1.5pt]
\tikzstyle{dedge}=[edge,dashed,gray]
\tikzstyle{redge}=[edge,colR]
\tikzstyle{bedge}=[edge,colB]
\tikzstyle{gedge}=[edge,colG]
\tikzstyle{lnode}=[circle,white,draw, fill=black,inner sep=1pt, font=\scriptsize]
\tikzstyle{hollow}=[circle,gray,draw, thick, fill=white,inner sep=0pt, minimum size=4pt]
\let\emph\relax 
\DeclareTextFontCommand{\emph}{\bfseries\em}
\begin{document}

\title{Infinitesimal rigidity and prestress stability for frameworks in normed spaces}

\author[Sean Dewar]{Sean Dewar*}\thanks{*Johann Radon Institute for Computational and Applied Mathematics, Austrian Academy of Sciences.\\Email: \nolinkurl{sean.dewar@ricam.oeaw.ac.at}\\The author was supported by the Austrian Science Fund (FWF): P31888.}

\begin{abstract}
	A (bar-and-joint) framework is a set of points in a normed space with a set of fixed distance constraints between them.
	Determining whether a framework is locally rigid -- i.e.~whether every other suitably close framework with the same distance constraints is an isometric copy -- is NP-hard when the normed space has dimension 2 or greater.
	We can reduce the complexity by instead considering derivatives of the constraints, 
	which linearises the problem.
	By applying methods from non-smooth analysis,
	we shall strengthen previous sufficient conditions for framework rigidity that utilise first-order derivatives.
	We shall also introduce the notions of prestress stability and second-order rigidity to the topic of normed space rigidity, two weaker sufficient conditions for framework rigidity previously only considered for Euclidean spaces.
\end{abstract}

\maketitle

\section{Introduction}

\subsection{Background}

Given a (finite dimensional real) normed space $X$ and a (finite simple) graph $G$,
we define a \emph{placement of $G$} to be a map $p :V \rightarrow X$,
and we call the pair $(G,p)$ a \emph{(bar-and-joint) framework in $X$}.
We define two frameworks $(G,p),(G,q)$ to be \emph{equivalent} if $\|p_v-p_w\|=\|q_v-q_w\|$ for all edges $vw$ of $G$,
and we define two placements $p,q$ to be \emph{congruent} if there exists an isometry $g:X \rightarrow X$ where $g \circ p = q$.
Given a framework $(G,p)$ in a normed space $X$,
we now wish to determine whether it is \emph{rigid} or \emph{flexible}, whatever those terms may mean.

Research into the rigidity and flexibility of frameworks in Euclidean spaces (finite-dimensional inner product spaces) can be seen to stem from the work of Cauchy \cite{cauchy}, Maxwell \cite{maxwell} and Kempe \cite{kempe}.
Research into rigidity and flexibility in other (finite-dimensional) normed spaces, however, is a much more recent endeavour.
The study of the rigidity of frameworks in normed spaces can be seen to date back to Kitson and Power \cite{kit-pow-1},
while research into flexible motions of frameworks in normed spaces was considered earlier by Cook, Lovett and Morgan \cite{clm}.
The closely related topic of graph flattenability -- determining whether all placements of a graph in an infinite-dimensional $\ell_p$ space can be embedded into a $d$-dimensional $\ell_p$ space whilst preserving edge lengths -- dates back even further to the work of Holsztynski \cite{hol78}, Witsenhausen \cite{wit86}, and Ball \cite{ball90},
and has been further continued in recent years by Willoughby and Sitharam \cite{SithWill} and Fiorini, Huynh, Joret, and Varvitsiotis \cite{fhjv2017}.

We have, as of yet, not defined what it means for a framework to be rigid.
At this point we have two reasonable definitions:
\begin{enumerate}[(i)]
    \item The framework $(G,p)$ is \emph{locally rigid} if there exists $\varepsilon >0$ so that every equivalent framework $(G,q)$ with $\|q_v-p_v\| < \varepsilon$ for every vertex $v$ is congruent;
    otherwise we say $(G,p)$ is \emph{locally flexible}.
    \item The framework $(G,p)$ is \emph{continuously rigid} if every \emph{continuous flex} $\gamma:[0,1]\rightarrow X^V$ (a continuous path with $\gamma(0)=p$ and $f_G(\gamma(t))=f_G(p)$ for all $t$) is \emph{trivial} ($\gamma(t)$ and $p$ are congruent for each value $t$);
    otherwise we say $(G,p)$ is \emph{continuously flexible}.
\end{enumerate}
Determining whether a framework has either of these rigidity conditions can be shown to be NP-hard in Euclidean spaces \cite{abbott},
and so will usually be equally difficult to determine in general normed spaces.
We can circumvent this issue by defining a stronger property as follows.

For a graph $G=(V,E)$ and normed space $X$,
let $f_G: X^V \rightarrow \mathbb{R}^E, ~ p \mapsto (\|p_v-p_w\|)_{vw \in E}$, be the \emph{rigidity map}.
An \emph{infinitesimal flex of $(G,p)$} is a vector $u \in X^V$ with 
\begin{align*}
    \lim_{t\rightarrow 0} \frac{1}{t}(f_G(p+tu) - f_G(p)) = \lim_{t\rightarrow 0} \left(\frac{\|p_v-p_w + t(u_v-u_w)\| - \|p_v-p_w\|}{t} \right)_{vw \in E} = (0)_{vw \in E}.
\end{align*}
Given $\Iso(X)$ is the Lie group of isometries of $X$ and $I:X \rightarrow X$ is the identity map for $X$,
every element of the linear space
\begin{align*}
    \mathcal{T}(p) : = \left\{ g \circ p : \text{ the map } g:X \rightarrow X \text{ lies in the tangent space of $\Iso(X)$ at $I$}  \right\}
\end{align*}
is an infinitesimal flex (see, for example, \cite{D21});
any such infinitesimal flex is called a \emph{trivial infinitesimal flex}.
We define a framework $(G,p)$ to be \emph{infinitesimally rigid} if every infinitesimal flex is trivial;
otherwise we say $(G,p)$ is \emph{infinitesimally flexible}.
If $(G,p)$ is \emph{well-positioned} -- i.e.~the rigidity map is (Fr\'{e}chet) differentiable at $p$ -- then this is equivalent to the (Fr\'{e}chet) derivative $df_G(p)$ of $f_G$ at $p$ (known as the \emph{rigidity operator} of $(G,p)$\footnote{In Euclidean spaces, the standard matrix representation of the rigidity operator for a well-positioned framework is exactly the \emph{rigidity matrix} (see, for example, \cite{gss}) after multiplying each row by $1/\|p_v-p_w\|$.}) having $\ker df_G(p) = \mathcal{T}(p)$.
We refer the reader to \cite{D21} for more details of the above definitions.

\subsection{Infinitesimal rigidity as a sufficient condition}\label{sec:intro2}

Asimow and Roth originally linked all three types of rigidity by showing that for a framework in a Euclidean space: (i) local and continuous rigidity are equivalent; (ii) all three types of rigidity are equivalent if the rigidity operator of the framework has maximal rank over all possible rigidity operators for placements of the same graph; and (iii) either almost all placements of a graph in a given Euclidean space are infinitesimally rigid, or none are \cite{asi-rot}.
It follows from (ii) that infinitesimal rigidity will imply local and continuous rigidity,
but the converse, however, is false; see \Cref{fig2}(i).

In \cite{D21},
the author proved the following analogues for all normed spaces:
(i) local rigidity implies continuous rigidity, but the converse statement is not true for all normed spaces; and (ii) all three types of rigidity are equivalent if the framework is \emph{constant} (i.e.~the derivative of the rigidity map will exist and its rank will remain constant on an open neighbourhood of the placement; see \cite[Section 4.2]{D21} for more details).
The third condition of Asimow and Roth was, however, shown by Kitson to be false in the case of polyhedral normed spaces (normed spaces where the unit ball forms a polytope); 
see \cite[Lemma 16]{kit}.
As the method implemented to prove point (ii) relied heavily on using differential geometry techniques to construct a differentiable manifold of equivalent placements,
it fairs badly in either of the following ``bad'' cases:
\begin{enumerate}
    \item\label{item1} the framework is \emph{badly-positioned}, i.e.~not well-positioned, or
    \item\label{item2} the framework is well-positioned but is not contained in a neighbourhood of well-positioned frameworks.
\end{enumerate}
(\ref{item1}) can be seen to be ``bad'' as badly-positioned infinitesimally rigid frameworks are not guaranteed to be locally/continuously rigid (see, for example, \Cref{fig2}(ii)).
(\ref{item2}), however, is only ``bad'' because the techniques used in differential geometry require maps that are continuously differentiable on open sets.
While we are always guaranteed that the set of well-positioned placements in a normed space will be dense (see \Cref{t:rad}),
we are not guaranteed that the set will always be open;
see, for example, \cite{govc} for a construction of an infinite family of normed spaces where every graph with at least one edge will have a dense set of badly-positioned placements.
In any such normed space we will not, with the current tools at our disposal, be able to utilise infinitesimal rigidity to find locally/continuously rigid frameworks.

\begin{figure}[tp]
	\begin{center}
        \begin{tikzpicture}[scale=2]
            \draw[->,red,thick] (1,0.5) -- (1.3,0.5);
            \draw[->,red,thick] (1,0.5) -- (0.7,0.5);
            
            \draw[->,red,thick] (0.5,1) -- (0.5,1.3);
            \draw[->,red,thick] (0.5,1) -- (0.5,0.7);
            
			\node[vertex] (1) at (0,0) {};
			\node[vertex] (2) at (1,0) {};
			\node[vertex] (2a) at (1,0.5) {};
			\node[vertex] (3) at (1,1) {};
			\node[vertex] (3a) at (0.5,1) {};
			\node[vertex] (4) at (0,1) {};
			
			\draw[edge] (1)edge(2);
			\draw[edge] (1)edge(3);
			\draw[edge] (2)edge(2a);
			\draw[edge] (2a)edge(3);
			\draw[edge] (2)edge(4);
			\draw[edge] (3)edge(3a);
			\draw[edge] (3a)edge(4);
			\draw[edge] (4)edge(1);
			
			\node at (0.5,-0.3) {(i)};
		\end{tikzpicture}
		\qquad\qquad\qquad\qquad
		\begin{tikzpicture}[scale=2]
			\draw[dashed,red,thick] (0.5,0.5) -- (-0.5,0.5) -- (-0.5,-0.5) -- (0.5,-0.5) -- (0.5,0.5);
			
			\node[vertex] (1) at (0,0) {};
			\node[vertex] (2) at (0.5,0.5) {};
			
			\draw[edge] (1)edge(2);
			\node at (0,-0.8) {(ii)};
		\end{tikzpicture}
	\end{center}
	\caption{(i) A framework in the Euclidean plane that is locally and continuously rigid but infinitesimally flexible; possible non-trivial infinitesimal flexes is indicated by the red arrows. (ii) A framework in the $\ell_\infty$ normed plane (see \Cref{sec:ex} for a definition) that is locally and continuously flexible but infinitesimally rigid; the dashed line represents other possible positions of the outer vertex.}\label{fig2}
\end{figure}
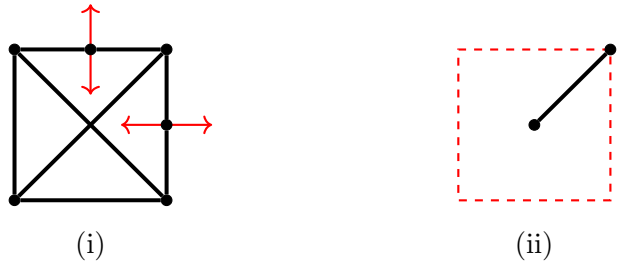

To solve the issues presented by (\ref{item1}) and (\ref{item2}),
we instead turn to non-smooth analytic methods;
i.e.~the study of convex and Lipschitz (see \Cref{def:lip}) maps and their properties.
We will apply these methods in \Cref{sec:main} to define \emph{strong infinitesimal rigidity} (\Cref{def:inf}).
Strong infinitesimal rigidity coincides with infinitesimal rigidity for well-positioned frameworks,
but has the added property that it will always imply local and continuous rigidity (\Cref{mainthm}),
hence fixing both (\ref{item1}) and (\ref{item2}).

\subsection{Prestress stability and second-order rigidity as sufficient conditions}

While determining whether a framework is infinitesimally rigid is relatively fast (it relies on determining the rank of a single matrix),
there are many locally/continuously rigid frameworks that are infinitesimally flexible;
for example, \Cref{fig2}(i).
One approach to solving the issue presented by the strictness of infinitesimal rigidity is to turn to second-order derivatives of the distance constraints.
Two properties of interest are \emph{prestress stability} and \emph{second-order rigidity} (see \Cref{def:prestress,def:2nd}), introduced by Connelly and Whiteley \cite{conn96} and Connelly \cite{conn80} respectively for Euclidean normed spaces.
Both of these conditions imply local/continuous rigidity in Euclidean normed spaces and can be determined quickly (although, unsurprisingly, not as quick as infinitesimal rigidity).

Unfortunately, many of the methods of Connelly and Whiteley fail to be applicable when we leave the relative safety of the Euclidean norm.
Indeed, it is not immediately obvious that there will even exist points where a non-Euclidean norm will be twice-differentiable,
although this was fortunately shown to be true in a very strong sense by Alexandrov's theorem (\Cref{t:alex}).
To get around many of these problems, we will once again turn to non-smooth analysis.
In \Cref{sec:2nd}, we will utilise non-smooth analysis results to extend prestress stability and second-order rigidity to general normed spaces.
We will prove that, like in Euclidean spaces, prestress stability will always imply both second-order and local rigidity,
although the author conjectures that second-order rigidity will not always imply local rigidity (\Cref{conj1}).

\subsection{Summary of results}\label{sec:summary}

We shall prove the following set of implications for framework rigidity in any normed space (the implication marked ``w.p.'' holds for well-positioned frameworks):
\begin{center}
\begin{tikzcd}
\text{Inf. rigid} \arrow[r, "\text{w.p.}", Rightarrow, bend right] & \text{Str. inf. rigid} \arrow[l, Rightarrow] \arrow[r, Rightarrow] & \text{Locally rigid} \arrow[r, Rightarrow] & \text{Cts. rigid}
\end{tikzcd}
\end{center}
We shall also prove that if the framework is also \emph{second-order well-positioned} (\Cref{def:2ndwp}),
we will have the following additional set of implications:
\begin{center}
\begin{tikzcd}
\text{(Str.) inf. rigid}   \arrow[r, Rightarrow] & \text{Pr. stable} \arrow[d, Rightarrow] \arrow[r, Rightarrow] & \text{Locally rigid} \\
                                                                      & \text{2nd-order rigid}                                        &                     
\end{tikzcd}
\end{center}
Towards the end of the paper, we will focus our results to particular normed spaces.
In \Cref{sec:norm}, we will strengthen our results for a variety of normed spaces.
We will follow this by showcasing some interesting examples in $\ell_p$ normed planes,
including the doubly-braced grids first investigated by Power in \cite{power20} (\Cref{sec:ex}). 
We shall conclude in \Cref{sec:end} with some final remarks and a conjecture regarding second-order rigidity.

\section{Non-smooth analysis}\label{sec:nonsmooth}

\subsection{Generalised derivatives}

For this section we will fix $X$ and $Y$ to be (finite-dimensional) normed spaces,
$D_1 \subset X$ and $D_2 \subset Y$ to be open sets and $f: D_1 \rightarrow D_2$ to be a continuous map.
We begin with the following definition.

\begin{definition}\label{def:lip}
    We say $f$ is \emph{Lipschitz} if there exists $K>0$ so that for every $x,y \in D_1$ we have $\|f(x)-f(y)\|_Y \leq K\|x-y\|_X$.
    We say $f$ is \emph{locally Lipschitz} if for every $x \in D_1$ there exists a neighbourhood $U$ so that the domain restricted map $f|_U$ is Lipschitz.
    If $f$ is a bijective Lipschitz map with a Lipschitz inverse map,
    then we say that $f$ is \emph{bilipschitz}.
\end{definition}

Every continuously differentiable map is locally Lipschitz, but the converse is not true.
We can say by a result of Rademacher that most points in the domain of a locally Lipschitz map will be differentiable.
For a proof of the following result, see \cite[Theorem 3.1.6]{federer}.

\begin{theorem}[Rademacher's theorem]\label{t:rad}
    Let $f: D_1 \rightarrow D_2$ be a locally Lipschitz map.
    Then the set of points in $D_1$ where $f$ is not differentiable has (Hausdorff) measure zero.
\end{theorem}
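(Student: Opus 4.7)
The plan is to reduce to the scalar-valued case and then carry out the classical three-step argument of Rademacher. Since $Y$ is finite-dimensional, a map into $Y$ is differentiable at a point if and only if each of its coordinates is, so it suffices to handle $f: D_1 \to \mathbb{R}$. Because all norms on a finite-dimensional space are equivalent (and Hausdorff measure on $X$ is comparable to Lebesgue measure after choosing a basis), I may further assume $X = \mathbb{R}^n$. A countable cover of $D_1$ by open balls on which $f$ is Lipschitz, together with a Lipschitz-extension lemma, reduces the claim to the case of a globally Lipschitz function $f: \mathbb{R}^n \to \mathbb{R}$ with constant $K$.

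The first substantive step is to prove that directional derivatives exist almost everywhere in each fixed direction. For a unit vector $v$, restrict $f$ to any line parallel to $v$: this restriction is Lipschitz on $\mathbb{R}$, hence absolutely continuous, hence differentiable almost everywhere by the classical Lebesgue theorem. By Fubini applied to the decomposition $\mathbb{R}^n = \mathbb{R} v \oplus v^{\perp}$, the set $A_v$ of points $x \in \mathbb{R}^n$ at which $\partial_v f(x)$ fails to exist is Lebesgue-null. Choosing a countable dense subset $\{v_k\}$ of the unit sphere, the union $A = \bigcup_k A_{v_k}$ remains null, so outside $A$ every $\partial_{v_k} f(x)$ exists.

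The harder step is to show that on a full-measure set the assignment $v \mapsto \partial_v f(x)$ is linear. Here I would test against an arbitrary $\varphi \in C_c^\infty(\mathbb{R}^n)$ and integrate by parts: the difference quotients $t^{-1}(f(x+tv_k) - f(x))$ are uniformly bounded by $K$, so the dominated convergence theorem gives
\begin{align*}
\int_{\mathbb{R}^n} \varphi(x)\, \partial_{v_k} f(x) \, dx = -\int_{\mathbb{R}^n} (\partial_{v_k} \varphi)(x) \, f(x) \, dx.
\end{align*}
The right-hand side is linear in $v_k$, hence so is the left-hand side for every test function $\varphi$, forcing $v_k \mapsto \partial_{v_k} f(x)$ to be (the restriction of) a linear functional $L_x$ for almost every $x$. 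Finally, to promote this to Fr\'{e}chet differentiability at such an $x$, I would use the Lipschitz bound to estimate, for a general unit vector $v$ approximated by some $v_k$,
\begin{align*}
\left| \frac{f(x+tv) - f(x)}{t} - L_x v \right| \leq 2K \|v - v_k\| + \left| \frac{f(x+tv_k) - f(x)}{t} - L_x v_k \right|,
\end{align*}
and then choose $v_k$ close to $v$ and $t$ small; compactness of the unit sphere makes the estimate uniform in $v$, yielding genuine differentiability at $x$.

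The main obstacle is the middle step: converting almost-everywhere existence of directional derivatives in each fixed direction into an almost-everywhere linear dependence on the direction. The integration-by-parts trick sidesteps this cleanly, but it crucially depends on the uniform bound on difference quotients supplied by the Lipschitz hypothesis; without that bound, neither the dominated-convergence argument nor the final Fr\'{e}chet promotion would go through.
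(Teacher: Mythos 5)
The paper does not prove this statement; it is quoted as a classical result with a citation to Federer. Your outline is the standard proof of Rademacher's theorem (essentially the argument given in the cited reference and in Evans--Gariepy), and it is sound: reduce to a scalar-valued globally Lipschitz function on $\mathbb{R}^n$, get almost-everywhere existence of each fixed directional derivative from the one-dimensional Lebesgue theorem plus Fubini, establish linearity in the direction by testing against smooth compactly supported functions, and upgrade to Fr\'echet differentiability via the Lipschitz bound and compactness of the unit sphere. Two points you gloss over are routine but worth naming: to apply Fubini you need the exceptional set $A_v$ to be measurable, which holds because the upper and lower difference quotients of the continuous function $f$ are Borel; and the passage from the identity $\int \varphi\,\partial_{v}f\,dx = \int \varphi\,(\nabla f\cdot v)\,dx$ for all test functions $\varphi$ to the pointwise equality $\partial_v f = \nabla f\cdot v$ almost everywhere uses the fundamental lemma of the calculus of variations, with the final exceptional null set taken as a countable union over the dense family $\{v_k\}$. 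With those remarks supplied, your closing estimate does correctly yield genuine differentiability at almost every point.
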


Using Rademacher's theorem,
we can define the following concept that was first introduced by Clarke \cite{clarke76}.
Denote the convex hull of a set $A$ by $\conv A$.
For a point $x \in D_1$,
define the set 
\begin{align*}
    \partial f(x) := \conv \left\{ \lim_{n \rightarrow \infty} df(x_n) : x_n \rightarrow x,~ f \text{ is differentiable at each } x_n \right\}.
\end{align*}
Any linear map $\delta f(x) \in \partial f(x)$ will be called a \emph{generalised derivative of $f$ at $x$}.
The generalised derivatives will have the following properties.

\begin{lemma}[see, for example, \cite{clarke}]\label{l:genderiv}
    Let $f: D_1 \rightarrow D_2$ be a locally Lipschitz map.
    Then the following properties hold:
    \begin{enumerate}[(i)]
        \item\label{l:genderiv1} $f$ is differentiable at a point $x$ if and only if $|\partial f(x)| = 1$.
        \item\label{l:genderiv2} The set $\partial f(x)$ will always be non-empty and compact.
        \item\label{l:genderiv3} For every point $x_0 \in D$ and every $\varepsilon >0$,
        there exists $\delta >0$ such that for all points $x \in D$ with $\|x-x_0\|_X<\delta$,
        we have 
        \begin{align*}
            \partial f(x) \subset \partial f(x_0) + B_\varepsilon,
        \end{align*}
        where $B_\varepsilon$ is the set of linear maps $T : X \rightarrow Y$ with $\|T(x)\|_Y < \varepsilon \|x\|_X$.
        Hence the derivative is continuous on the set of differentiable points.
    \end{enumerate}
\end{lemma}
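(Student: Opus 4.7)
The plan is to deduce all three properties from Rademacher's theorem (\Cref{t:rad}) together with the finite-dimensional compactness furnished by the local Lipschitz bound. Throughout I would fix $x_0 \in D_1$ and a neighbourhood $U \ni x_0$ on which $f$ is $K$-Lipschitz, and write $S(x) := \{\lim_{n} df(x_n) : x_n \to x, \ f \text{ is differentiable at each } x_n \in U\}$, so that $\partial f(x) = \conv S(x)$. The key elementary observation is that $\|df(y)\|_{\mathrm{op}} \leq K$ at every differentiability point $y \in U$ (apply the definition of the derivative to the Lipschitz estimate on secant lines), which places every $S(x)$ inside the operator-norm ball of radius $K$ in the finite-dimensional space $L(X,Y)$.

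For (ii), Rademacher's theorem guarantees that the differentiability points of $f$ form a dense subset of $D_1$, so I can select a sequence $x_n \to x_0$ of such points; the uniform bound above combined with Bolzano--Weierstrass in $L(X,Y)$ produces a convergent subsequence, showing $S(x_0) \neq \emptyset$. A routine diagonal argument shows $S(x_0)$ is closed, hence compact, and in finite dimensions $\partial f(x_0) = \conv S(x_0)$ is then also compact and non-empty.

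For (iii), I would argue by contradiction: suppose there exist $\varepsilon > 0$, a sequence $x_n \to x_0$, and $T_n \in \partial f(x_n)$ with $T_n \not\in \partial f(x_0) + B_\varepsilon$. Carath\'eodory's theorem applied in $L(X,Y)$ writes each $T_n$ as a convex combination $\sum_{i=1}^{d+1} \lambda_{n,i} S_{n,i}$ with $S_{n,i} \in S(x_n)$ and $d = \dim L(X,Y)$. Passing to subsequences, both $\lambda_{n,i} \to \lambda_i$ and $S_{n,i} \to S_i$ by compactness, so $T_n \to T := \sum_i \lambda_i S_i$. For each $i$, a further diagonal extraction produces differentiability points $z_n^{(i)} \to x_0$ with $df(z_n^{(i)}) \to S_i$, exhibiting $S_i \in S(x_0)$ and hence $T \in \partial f(x_0)$; this contradicts the separation hypothesis for large $n$.

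For (i), given the singleton hypothesis $\partial f(x_0) = \{T\}$, property (iii) implies that for every $\varepsilon > 0$ one has $\|df(y) - T\|_{\mathrm{op}} < \varepsilon$ at every differentiability point $y$ in a small enough convex neighbourhood of $x_0$. Applied to the locally Lipschitz function $g(y) := f(y) - f(x_0) - T(y - x_0)$, whose derivative is thus $\varepsilon$-small almost everywhere, the standard fact that a locally Lipschitz map on a convex set is globally Lipschitz with constant equal to the essential supremum of its derivative yields $\|f(y) - f(x_0) - T(y-x_0)\| \leq \varepsilon \|y - x_0\|$ for $y$ close to $x_0$, i.e.\ Fr\'echet differentiability at $x_0$ with derivative $T$. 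The main obstacle is the converse direction: pointwise Fr\'echet differentiability alone does not force $\partial f$ to be a singleton, as illustrated by $x \mapsto x^2 \sin(1/x)$ at $0$. The resolution, implicit in Clarke's framework, is that ``differentiable'' here must be read in the strict (Hadamard) sense, under which every limit $\lim_n df(x_n)$ along differentiability points $x_n \to x_0$ is forced to equal $df(x_0)$, giving $S(x_0) = \{df(x_0)\}$ and hence the required singleton.
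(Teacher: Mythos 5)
The paper does not actually prove this lemma --- it is quoted wholesale from Clarke's book --- so the only comparison available is with the standard literature proofs, which your write-up essentially reconstructs. Parts (ii) and (iii) are correct and complete: the uniform bound $\|df(y)\|_{\mathrm{op}}\le K$ at differentiability points in $U$, Rademacher plus Bolzano--Weierstrass for non-emptiness, a diagonal argument for closedness of the limit set, and Carath\'eodory plus a further diagonal extraction for the upper semicontinuity are exactly the arguments behind Clarke's Proposition 2.6.2. The ``$\Leftarrow$'' direction of (i), via the observation that $\partial f(x_0)=\{T\}$ together with (iii) forces $\|df(y)-T\|_{\mathrm{op}}<\varepsilon$ at almost every $y$ in a small convex neighbourhood, followed by integration along segments, is likewise the standard route.

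Your most valuable contribution is spotting that the ``$\Rightarrow$'' direction of (i), read with ``differentiable'' meaning Fr\'echet differentiable, is false: for $f(x)=x^2\sin(1/x)$ one has $f'(0)=0$ yet $\partial f(0)=[-1,1]$. Clarke's actual result characterises $|\partial f(x)|=1$ by \emph{strict} differentiability, and your reinterpretation is the right fix. Two corrections, though. First, do not call this ``Hadamard'' differentiability: for locally Lipschitz maps between finite-dimensional spaces Hadamard and Fr\'echet differentiability coincide, whereas strict differentiability is genuinely stronger; the two notions should not be conflated. Second, the same counterexample falsifies the closing sentence of (iii) (``the derivative is continuous on the set of differentiable points''), which your proof leaves unaddressed --- it does not follow from the inclusion $\partial f(x)\subset\partial f(x_0)+B_\varepsilon$ unless $\partial f(x_0)$ is a singleton. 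Neither defect damages the paper: every component of the rigidity map is convex, the Clarke subdifferential of a convex function coincides with its convex-analytic subdifferential, and a convex function is differentiable at a point precisely when that subdifferential is a singleton; hence all of the paper's invocations of (i) and of the final sentence of (iii) concern situations where the literal statement is valid. It would strengthen your write-up to say this explicitly rather than leaving the repair ``implicit in Clarke's framework.''
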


Convex functions can be seen to be locally Lipschitz,
we can even tie their generalised derivatives into one-sided directional derivatives.

\begin{lemma}[see, for example, \cite{clarke}]\label{l:conv}
    Let $f: D_1 \rightarrow D_2$ be a convex function and $x,y \in D_1$.
    Then the one-way limit $\lim_{t \searrow 0} \frac{1}{t}(f(x+ty)-f(x))$ exists and is equal to $\max_{F \in \partial f(x)} F(u)$.
\end{lemma}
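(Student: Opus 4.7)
The plan is to split the argument into two parts: first, to establish that the one-sided limit $f'(x;y) := \lim_{t \searrow 0} \frac{1}{t}(f(x+ty)-f(x))$ exists and is finite, and second, to identify it with $\max_{F \in \partial f(x)} F(y)$. A convex function on an open subset of a finite-dimensional normed space is automatically locally Lipschitz, so \Cref{t:rad} and \Cref{l:genderiv} apply; in particular $\partial f(x)$ is non-empty and compact.

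For existence, I would use the classical monotonicity of secant slopes. For $0 < s < t$ with $x+ty \in D_1$, applying convexity to the collinear points $x,\, x+sy,\, x+ty$ gives $\frac{f(x+sy)-f(x)}{s} \leq \frac{f(x+ty)-f(x)}{t}$, so the slope function $t \mapsto \frac{f(x+ty)-f(x)}{t}$ is non-decreasing. Combined with the lower bound $-K\|y\|_X$ coming from a local Lipschitz constant $K$, this forces the infimum over $t>0$ to be finite, and hence the right-hand limit exists.

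For each $F$ in the generating set of $\partial f(x)$, write $F = \lim_n df(x_n)$ where the $x_n \to x$ run through differentiable points of $f$. At each such $x_n$, convexity yields the subgradient inequality $f(x_n + h) \geq f(x_n) + df(x_n)(h)$; passing to the limit using continuity of $f$ gives $f(x + h) \geq f(x) + F(h)$ for all admissible $h$. Setting $h = ty$, dividing by $t$, and sending $t \searrow 0$ produces $F(y) \leq f'(x;y)$, and convex combination extends this inequality to every element of $\partial f(x)$.

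The main obstacle is the reverse inequality: producing an $F^* \in \partial f(x)$ that attains the bound. For this I would invoke Lebourg's mean value theorem for locally Lipschitz maps (available in \cite{clarke}), which furnishes a point $z_t$ on the open segment between $x$ and $x+ty$ together with $F_t \in \partial f(z_t)$ such that $F_t(y) = \frac{f(x+ty) - f(x)}{t}$. As $t \searrow 0$ we have $z_t \to x$, so the upper semicontinuity in part (iii) of \Cref{l:genderiv} combined with compactness of $\partial f(x)$ allows us to extract a subsequence $F_{t_n}$ converging in operator norm to some $F^* \in \partial f(x)$. Evaluating at $y$ then gives $F^*(y) = \lim_n F_{t_n}(y) = f'(x;y)$, finishing the proof.
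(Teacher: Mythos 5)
The paper does not prove this lemma at all --- it is quoted from Clarke's book \cite{clarke} --- so there is no in-paper argument to compare against; what matters is whether your proof stands on its own, and it does. Your three steps are all sound: monotonicity of the secant slopes plus the local Lipschitz lower bound gives existence of $f'(x;y)$ as an infimum; the subgradient inequality at nearby differentiable points, passed to the limit and then through convex combinations, gives $F(y)\leq f'(x;y)$ for every $F\in\partial f(x)$ (this is really the statement that for convex $f$ the Clarke gradient is contained in the convex-analysis subdifferential); and Lebourg's mean value theorem together with \Cref{l:genderiv}(ii)--(iii) produces an attaining $F^*$. Your route differs from the standard textbook one: Clarke identifies $\partial f(x)$ for convex $f$ with the ordinary subdifferential and characterises $f'(x;\cdot)$ as its support function, with attainment coming from a separating-hyperplane (Hahn--Banach) argument rather than a mean value theorem. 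Your version has the advantage of using only the machinery already displayed in \Cref{l:genderiv}, at the cost of importing Lebourg's theorem as a black box. One small point worth tightening: compactness of $\partial f(x)$ alone does not bound the sequence $F_{t_n}\in\partial f(z_{t_n})$; you should say explicitly that for small $t$ the inclusion $\partial f(z_t)\subset\partial f(x)+B_1$ from part (iii) (or the uniform local Lipschitz constant $K$) bounds the operator norms, so that a convergent subsequence exists in the finite-dimensional space of linear maps, and that its limit lies in $\bigcap_{\varepsilon>0}\overline{\partial f(x)+B_\varepsilon}=\partial f(x)$ by closedness. This is a routine repair, not a gap in the idea.
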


With this generalisation of differentiation for locally Lipschitz maps,
Butler, Timourian and Viger were able to give a Lipschitzian analogue of the constant rank theorem.

\begin{theorem}[\cite{btv}]\label{t:btv}
    Let $f: D_1 \rightarrow D_2$ be a locally Lipschitz map, and let $m = \dim X$ and $n = \dim Y$.
    Suppose that every generalised derivative of $f$ at every point $x \in D_1$ has rank $k$.
    Then for every point $z \in D_1$, there exists open sets $U \subset \mathbb{R}^m$, $V \subset \mathbb{R}^n$, $U' \subset D_1$, $V' \subset D_2$ with $z \in U'$ and $f(z) \in V'$, and bilipschitz maps $\phi : U \rightarrow U'$ and $\psi:V \rightarrow V'$,
    so that for every point $x=(x_1,\ldots,x_n) \in U$ we have
    \begin{align*}
        \psi \circ f \circ \phi^{-1} (x_1,\ldots,x_m) = (x_1,\ldots, x_k).
    \end{align*}
\end{theorem}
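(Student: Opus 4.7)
The plan is to mimic the classical constant rank theorem from differential topology, replacing the smooth inverse function theorem at its heart with a Lipschitzian inverse function theorem of Clarke (which follows from the same generalised-derivative machinery appearing in \Cref{l:genderiv}). I would aim to produce the bilipschitz changes of coordinates $\phi$ and $\psi$ in two stages: first straightening the ``rank-$k$ direction'' of $f$, then peeling off the remaining dependence.

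First I would fix the target point $z \in D_1$, pick any $T_0 \in \partial f(z)$, and use that $T_0$ has rank $k$ to choose direct sum decompositions $X = X_1 \oplus X_2$ and $Y = Y_1 \oplus Y_2$ with $\dim X_1 = \dim Y_1 = k$, such that $T_0$ restricts to a linear isomorphism $X_1 \to Y_1$ and $Y_1 = \operatorname{im} T_0$. Writing $f = (f_1, f_2)$ with respect to $Y = Y_1 \oplus Y_2$, I would then define
\begin{align*}
    \phi(x_1, x_2) := (f_1(x_1, x_2), \, x_2),
\end{align*}
which is locally Lipschitz and has $(T_0|_{X_1}, \operatorname{id}_{X_2})$ as one of its generalised derivatives at $z$. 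Using the upper semicontinuity of $\partial f$ given by \Cref{l:genderiv}(\ref{l:genderiv3}) together with the fact that every element of $\partial f(x)$ has rank $k$, on a small neighbourhood of $z$ every element of $\partial \phi(x)$ is invertible. Clarke's Lipschitz inverse function theorem then produces a bilipschitz inverse $\phi^{-1}$ defined on an open neighbourhood of $\phi(z)$; this is the map $\phi$ in the statement, after identifying $X_1 \oplus X_2$ with $\mathbb{R}^m$.

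Next, I would analyse the composite $F := f \circ \phi^{-1}$. By construction its first component is the projection $(y_1, x_2) \mapsto y_1$, so $F$ has the form $F(y_1, x_2) = (y_1, g(y_1, x_2))$ for some locally Lipschitz $g$ into $Y_2$. Here is where the constant-rank hypothesis does the essential work: at any differentiability point, $dF$ equals $d\phi^{-1}$ composed with an element of $\partial f$, and so has rank $k$; but $dF$ has the block form $\bigl( \begin{smallmatrix} I & 0 \\ \partial_1 g & \partial_2 g \end{smallmatrix}\bigr)$, forcing $\partial_2 g = 0$ wherever $g$ is differentiable. Combined with Rademacher's theorem (\Cref{t:rad}) and the Lipschitz property of $g$, this should let me conclude that $g$ is independent of $x_2$ on a neighbourhood of $\phi(z)$, so that $g(y_1, x_2) = h(y_1)$ for some locally Lipschitz $h$. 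Finally I would define $\psi^{-1}(y_1, y_2) := (y_1, y_2 - h(y_1))$ on a neighbourhood of $F(\phi(z))$; this map and its obvious inverse $(y_1, y_2) \mapsto (y_1, y_2 + h(y_1))$ are Lipschitz because $h$ is, and $\psi^{-1} \circ F(y_1, x_2) = (y_1, 0)$ in these coordinates.

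The main obstacle I foresee is the middle step: rigorously passing from ``$\partial_2 g = 0$ almost everywhere'' to ``$g$ is genuinely independent of $x_2$.'' For a Lipschitz function this is essentially an absolute continuity argument along lines in the $x_2$-direction, and one must verify that the set of non-differentiable points does not obstruct it; this is where the full strength of \Cref{t:rad} and a careful Fubini-type decomposition of the null set is required. A secondary obstacle is confirming that Clarke's Lipschitz inverse function theorem applies to $\phi$, which in turn reduces to checking that the constant rank hypothesis propagates to the generalised derivatives of $\phi$ via the chain rule for generalised derivatives and the upper semicontinuity in \Cref{l:genderiv}(\ref{l:genderiv3}).
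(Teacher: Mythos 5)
The paper does not prove \Cref{t:btv} at all --- it is imported verbatim from Butler--Timourian--Viger \cite{btv} --- so your sketch is being judged against the original source rather than an in-paper argument. Your plan is the natural adaptation of the smooth constant rank theorem, but it has a genuine gap at its very first step, and it is precisely the point where the Lipschitz case is substantially harder than the $C^1$ case. You fix a single $T_0 \in \partial f(z)$ and split $X = X_1 \oplus X_2$, $Y = Y_1 \oplus Y_2$ accordingly; for Clarke's inverse function theorem to apply to $\phi = (f_1, \pi_{X_2})$ you need every element of $\partial \phi(x)$ for $x$ near $z$ to be invertible, which amounts to $\pi_{Y_1} \circ T|_{X_1}$ being invertible for \emph{every} $T \in \partial f(z)$. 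That is strictly stronger than each $T$ having rank $k$: the elements of the convex set $\partial f(z)$ need not share kernels or images. Concretely, take $f: \mathbb{R}^2 \rightarrow \mathbb{R}^2$, $f(x,y) = (\max(x,y), 0)$. Every generalised derivative at every point is of the form $(u,v) \mapsto (tu + (1-t)v, 0)$ for some $t \in [0,1]$, so the hypotheses of \Cref{t:btv} hold with $k=1$. Choosing $T_0 : (u,v) \mapsto (u,0)$ gives $X_1 = Y_1 = \operatorname{span}(e_1)$, $X_2 = \operatorname{span}(e_2)$ and $\phi(x,y) = (\max(x,y), y)$, which is not injective on any neighbourhood of the origin (it sends $(x,y)$ to $(y,y)$ for all $x < y$); correspondingly $\partial \phi(0)$ contains the singular map arising from $T_1 : (u,v) \mapsto (v,0)$, so Clarke's theorem simply does not apply. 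A different splitting (e.g.\ $X_1 = \operatorname{span}(e_1 + e_2)$) happens to work for this example, but your recipe gives no way to find one, and proving that a uniformly good choice --- or an altogether different construction --- always exists under the constant rank hypothesis is the real content of \cite{btv}.

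Two smaller points. In the second stage you cannot write $dF = df \circ d\phi^{-1}$ at a differentiability point of $F = f \circ \phi^{-1}$, since $\phi^{-1}$ is only Lipschitz and the chain rule for generalised Jacobians gives only an inclusion into the convex hull of products of elements of $\partial f$ and $\partial \phi^{-1}$; as convex combinations of rank-$\leq k$ maps can have rank exceeding $k$, the claim ``$\operatorname{rank} dF = k$ at differentiability points'' needs its own justification. By contrast, the step you flag as the main obstacle --- passing from $\partial_2 g = 0$ almost everywhere to genuine independence of $x_2$ --- is actually the unproblematic one: Fubini reduces it to almost every line parallel to $X_2$, Lipschitz functions are absolutely continuous on lines, and continuity handles the exceptional null set of lines.
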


\subsection{Hessians}

For this section we will fix $X$ to be a normed space and $f: X \rightarrow \mathbb{R}$ to be a continuous function.

\begin{definition}
    Suppose that the function $f$ is differentiable at a point $x\in X$.
    We say that $f$ is \emph{twice-differentiable at $x$} if there exists a symmetric bilinear function $d^2f(x) : X \times X \rightarrow \mathbb{R}$ so that
    \begin{align*}
        \lim_{\|h\| \rightarrow 0} \frac{f(x+h) - f(x) - df(x)(h) - \frac{1}{2}d^2f(x)(h,h)}{\|h\|^2} = 0;
    \end{align*}
    we call $d^2f(x)$ (if it exists) the \emph{Hessian of $f$ at $x$}.
\end{definition}

It can be quickly checked that if a function has a Hessian at a point,
then it must also be unique.
If $f$ and $df$ are both continuously differentiable on an open set $U \subset X$,
then $d^2f(x)$ will be the derivative of $df$ at each point $x \in U$.
The Hessian of a function at a point can be very useful,
especially if we wish to determine whether a point is a local minimum or not.
We will first need the following result regarding isometries of normed spaces.

\begin{lemma}[{\cite[Lemma 3.4]{D21}}]\label{l:isom}
    Let $\mathcal{G}$ be a subgroup of the isometries of $X$ with tangent space $T_I \mathcal{G}$ at the identity map $I$.
    Then for each $x \in X$,
    the set $\mathcal{G} \cdot x := \{ g(x): g \in \mathcal{G}\}$ is a closed\footnote{Here we mean closed in the topological sense. We are not assuming that the manifold is also compact.} smooth manifold with tangent space $T_I \mathcal{G} \cdot x := \{ g(x): g \in T_I \mathcal{G}\}$ at $x$.
\end{lemma}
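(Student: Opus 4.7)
The plan is to realise $\mathcal{G}\cdot x$ as the image of a smooth orbit map and apply standard Lie group orbit theory. First I would note that, since $X$ is a finite-dimensional normed space, the Mazur--Ulam theorem implies every isometry of $X$ is affine, so $\Iso(X)$ embeds as a closed subgroup of the finite-dimensional Lie group $\mathrm{Aff}(X)$. Cartan's closed subgroup theorem then equips $\Iso(X)$ with the structure of a Lie group, and the hypothesis that $\mathcal{G}$ has a well-defined tangent space at $I$ amounts to $\mathcal{G}$ being a (closed) Lie subgroup, hence a Lie group in its own right with Lie algebra $T_I\mathcal{G}$.

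Next I would introduce the smooth orbit map $\pi_x:\mathcal{G}\rightarrow X$, $g\mapsto g(x)$. For any smooth curve $g_t$ in $\mathcal{G}$ with $g_0=I$ and $\dot g_0 = v\in T_I\mathcal{G}$, the chain rule yields
$$d\pi_x(I)(v) = \left.\frac{d}{dt}\right|_{t=0} g_t(x) = v(x),$$
so the image of $d\pi_x(I)$ equals $T_I\mathcal{G}\cdot x$. The orbit map is also equivariant --- $\pi_x\circ L_g = g\circ \pi_x$ for every $g\in\mathcal{G}$, where $L_g$ denotes left translation --- and since both $L_g$ on $\mathcal{G}$ and $g$ on $X$ are diffeomorphisms, the differential $d\pi_x$ has constant rank on all of $\mathcal{G}$. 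Combined with the orbit--stabiliser theorem --- the stabiliser $\mathcal{G}_x=\{g\in\mathcal{G}:g(x)=x\}$ is closed, hence a Lie subgroup, and the induced map $\mathcal{G}/\mathcal{G}_x\rightarrow\mathcal{G}\cdot x$ is a smooth bijective immersion --- this gives $\mathcal{G}\cdot x$ the structure of an immersed smooth submanifold of $X$ with tangent space $T_I\mathcal{G}\cdot x$ at $x$.

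The main obstacle is promoting ``immersed'' to ``closed embedded'', which is what the statement requires. My approach would be to verify that $\mathcal{G}$ acts on $X$ by a proper action: because isometries of a normed space are affine with linear parts of operator norm one, a short Arzel\`a--Ascoli-type argument shows that the map $(g,y)\mapsto(g(y),y)$ from $\mathcal{G}\times X$ to $X\times X$ has compact preimage over every compact set. A standard result on proper smooth Lie group actions (see, for example, Duistermaat--Kolk) then guarantees that every orbit $\mathcal{G}\cdot x$ is a topologically closed embedded submanifold of $X$, which both completes the manifold claim and justifies the use of the word ``closed'' in the statement.
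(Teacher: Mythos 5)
This lemma is not proved in the paper at all: it is imported verbatim from \cite{D21}, so there is no in-paper argument to compare against. Your proposal is the standard Lie-theoretic proof of such orbit statements and is essentially sound: Mazur--Ulam realises $\Iso(X)$ as a closed subgroup of the affine group, hence a Lie group; the orbit map $\pi_x$ is equivariant and therefore of constant rank, with $\im d\pi_x(I)=T_I\mathcal{G}\cdot x$; and properness of the action --- which does hold, since the group of linear isometries of a finite-dimensional normed space is compact and the translation part of any $g$ with $y$ and $g(y)$ confined to compacta is bounded --- upgrades the immersed orbit $\mathcal{G}/\mathcal{G}_x\cong\mathcal{G}\cdot x$ to a closed embedded submanifold with the stated tangent space.

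The one point you should not leave implicit is the closedness of $\mathcal{G}$ in $\Iso(X)$. You write that the existence of a tangent space at $I$ ``amounts to $\mathcal{G}$ being a (closed) Lie subgroup'', but a tangent space at the identity only gives an immersed Lie subgroup: an irrational one-parameter winding inside $SO(2)\times SO(2)\subset\Iso(\mathbb{R}^4)$ has a perfectly good tangent space at $I$, yet the orbit of a generic point is a dense, non-closed winding on a $2$-torus, so the conclusion of the lemma fails for it as literally stated. Closedness is also exactly what your properness argument consumes: if $\mathcal{G}$ is not closed, a sequence $g_n\in\mathcal{G}$ may converge in $\Iso(X)$ to an isometry outside $\mathcal{G}$, and the preimage of a compact set under $(g,y)\mapsto(g(y),y)$ fails to be compact in $\mathcal{G}\times X$. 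The hypothesis should therefore be read (as it is in \cite{D21}, and as is satisfied by every subgroup actually used in this paper) as requiring $\mathcal{G}$ to be a closed subgroup; with that made explicit, your argument goes through.
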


\begin{lemma}\label{l:minimum}
    Let $\mathcal{G}$ be a subgroup of the group of isometries of $X$.
    Suppose $f(g.x) = f(x)$ for all $g \in \mathcal{G}$ and $x\in X$.
    If:
    \begin{enumerate}[(i)]
        \item $f$ is twice-differentiable at $x_0$,
        \item the derivative $df(x_0)$ is the zero map, i.e.~$df(x_0)(u) = 0$ for all $u \in X$,
        \item the Hessian $d^2f(x_0)$ is \emph{positive semi-definite},
        i.e.~$d^2f(x_0)(u,u) \geq 0$ for all $u \in X$, and
        \item for any $u \in X$ we have $d^2f(x_0)(u,u) =0$ if and only if $u \in T_I \mathcal{G} \cdot x_0$;
    \end{enumerate}
    then $x_0$ is a local minimum of $f$, and there exists a neighbourhood $U$ of $x_0$ where $f^{-1}[f(x_0)]\cap U = (\mathcal{G}\cdot x) \cap U$.
\end{lemma}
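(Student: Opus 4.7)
The plan is to reduce the statement to a strict local minimum argument on a transverse slice to the orbit $\mathcal{G}\cdot x_0$. Write $T := T_I\mathcal{G}\cdot x_0$, which by \Cref{l:isom} is the tangent space at $x_0$ to the smooth submanifold $\mathcal{G}\cdot x_0$; hypothesis (iv) together with (iii) identifies $T$ as precisely the null directions of the positive-semidefinite form $d^2f(x_0)$. I would fix a linear complement $N\subseteq X$ so that $X=T\oplus N$, and choose a subspace $\mathfrak{g}_0\subseteq T_I\mathcal{G}$ complementary to the kernel of the linear map $\xi\mapsto \xi(x_0)$, so that $\mathfrak{g}_0\to T$, $\xi\mapsto \xi(x_0)$, is a linear isomorphism. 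In particular $\dim \mathfrak{g}_0 + \dim N = \dim T + \dim N = \dim X$.

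Next, I would define $\Phi:\mathfrak{g}_0\times N\to X$ by $\Phi(\xi,v) = \exp(\xi)(x_0+v)$, where $\exp$ denotes the Lie-group exponential of $\mathcal{G}$. A short computation shows that the derivative of $\Phi$ at $(0,0)$ sends $(\xi,v)$ to $\xi(x_0)+v\in T\oplus N = X$, which is a linear isomorphism by construction. The inverse function theorem then supplies a neighbourhood $U$ of $x_0$ such that every $x\in U$ admits a unique representation $x=g(x_0+v)$ with $g\in \mathcal{G}$ near $I$ and $v\in N$ near $0$. The hypothesised $\mathcal{G}$-invariance of $f$ then gives $f(x)=f(x_0+v)$, so understanding $f$ near $x_0$ reduces to understanding it on the slice $x_0 + N$.

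Finally, I would analyse $f$ on this slice. Since $df(x_0)=0$, the map $v\mapsto f(x_0+v)$ is twice-differentiable at $0$ with zero first derivative and with Hessian $d^2f(x_0)|_{N\times N}$. Hypotheses (iii) and (iv), together with $T\cap N = \{0\}$, force this restricted Hessian to be strictly positive definite; as $N$ is finite-dimensional, a compactness argument on the unit sphere of $N$ yields a constant $c>0$ with $d^2f(x_0)(v,v)\geq c\|v\|^2$ for every $v\in N$. The twice-differentiability of $f$ at $x_0$ then produces a standard second-order Taylor estimate
\begin{align*}
    f(x_0+v) - f(x_0) \;\geq\; \tfrac{c}{4}\|v\|^2 \;>\; 0
\end{align*}
for all sufficiently small nonzero $v\in N$. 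Combined with the slice parameterisation, this gives $f(x)\geq f(x_0)$ on $U$ (after possibly shrinking $U$) with equality precisely when $v=0$, i.e.\ precisely when $x=g(x_0)\in \mathcal{G}\cdot x_0$, proving both assertions.

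I expect the chief technical point to be setting up the slice decomposition cleanly: one must treat $\mathcal{G}$ as an embedded Lie subgroup of $\Iso(X)$ so that $\exp$ is available, and verify carefully that $\xi\mapsto \exp(\xi)(x_0)$ has derivative $\xi\mapsto \xi(x_0)$ at the origin under the identifications used in \Cref{l:isom}. Once that inverse-function-theorem step is justified, the rest is a routine strict-minimum argument on the finite-dimensional transversal $N$.
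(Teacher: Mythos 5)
Your proof is correct, and the core of it --- restrict to a transversal of the orbit, note that hypotheses (iii) and (iv) make the Hessian strictly positive definite there, extract a uniform constant $c>0$ by compactness of the unit sphere, and conclude with a second-order Taylor estimate --- is exactly the computation the paper performs. Where you genuinely diverge is in how the local decomposition $x = g(x_0+v)$ is produced. You build a tubular-neighbourhood chart via the inverse function theorem applied to $\Phi(\xi,v)=\exp(\xi)(x_0+v)$ on $\mathfrak{g}_0\times N$; the paper instead first renorms $X$ by an equivalent inner product under which $\mathcal{G}$ still acts by isometries, takes $H$ to be the orthogonal complement of $T_I\mathcal{G}\cdot x_0$, and then uses a nearest-point projection onto the orbit: since $\mathcal{G}\cdot x_0$ is a closed submanifold by \Cref{l:isom}, every nearby $x$ has a closest orbit point $g(x_0)$, the difference $x-g(x_0)$ is orthogonal to the tangent space there, and equivariance of the orthogonal splitting gives $x=g(x_0+h)$ with $h\in H$. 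Your route buys a cleaner, more standard slice argument that needs neither the renorming trick nor closedness of the orbit (if $f(x)=f(x_0)$ forces $v=0$ in your chart, membership in the orbit follows regardless); the cost is that you must set up the Lie-theoretic machinery --- smoothness of $\Phi$ and the derivative computation $d\Phi(0,0)(\xi,v)=\xi(x_0)+v$ --- which the paper sidesteps by leaning on elementary Hilbert-space geometry and \Cref{l:isom}. Both arguments are complete; yours is a valid alternative.
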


\begin{proof}
    Since $X$ can be renormed by an equivalent inner product norm so that $\mathcal{G}$ remains a subgroup of the isometries (see, for example, \cite[Cor.~3.3.4]{minkowski}),
    we may assume that $X$ is an inner product space with inner product $\langle \cdot, \cdot \rangle$.
    Define $H \subset X$ to be the orthogonal complement of $T_I \mathcal{G} \cdot x_0$,
    i.e.~$\langle h, z \rangle = 0$ for all $h \in H$ and $z \in T_I \mathcal{G} \cdot x_0$.
    If we set 
    \begin{align*}
        c:= \inf \left\{ d^2 f(x_0)(h,h) : h \in H, ~ \|h\|=1\right\},
    \end{align*}
    then as $d^2f(x_0)$ is positive definite on $H$ 
    (i.e.~$d^2f(x_0)(u,u) > 0$ for all non-zero $u \in H$),
    we observe that $c >0$.
    Since Hessian of $f$ is positive definite on $H$ and $df(x_0)$ is the zero map,
    we have from rearranging the definition of the Hessian the following;
    for every $\varepsilon >0$,
    there exists $\delta >0$ such that if $\|h\|<\delta$ and $h \in H$ then
    \begin{align*}
        \frac{f(x_0+h) - f(x_0)}{\|h\|^2} \ > \ \frac{1}{2} d^2 f(x_0) \left(\frac{h}{\|h\|}, \frac{h}{\|h\|}  \right) - \varepsilon \ \geq \ c-\varepsilon.
    \end{align*}
    Hence there exists $\delta' >0$ so that if $h \in H \setminus \{0\}$ with $\|h\|<\delta'$,
    then $f(x_0) > f(x_0)$.
    
    Define the open set $O \subset X$ of points that are less than $\delta'$ distance from $\mathcal{G} \cdot x_0$,
    and fix a point $x \in (O \setminus \mathcal{G} \cdot x_0)$.
    Since $\mathcal{G} \cdot x_0$ is a closed subset of $X$,
    there exists $g \in \mathcal{G}$ so that $g (x_0)$ is the closest point to $x$ in $\mathcal{G} \cdot x_0$.
    The vector $x- g(x_0)$ must be perpendicular to the tangent space of $\mathcal{G} \cdot x_0$ at $g(x_0)$,
    as otherwise there would exist a point in $\mathcal{G} \cdot x_0$ closer to $x$.
    Since the tangent space of $\mathcal{G} \cdot x_0$ at $g(x_0)$ is $g(T_I \mathcal{G} \cdot x_0)-g(0)$,
    we have $x = g(x_0 + h)$ for some $h \in H \setminus \{0\}$ with $\|h\|<\delta'$.
    Hence $f(x) = f(g(x_0 + h)) = f(x_0 +h) > f(x_0)$ as required.
\end{proof}

A convex functions will be twice-differentiable at almost all points by the following famous result of Alexandrov \cite{alex}.

\begin{theorem}[Alexandrov's theorem]\label{t:alex}
    If $f: X \rightarrow \mathbb{R}$ is a convex function,
    then there exists a conull\footnote{The complement of the set has Hausdorff measure zero.} subset $D' \subset X$ of twice-differentiable points of $f$.
\end{theorem}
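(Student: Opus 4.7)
The plan is to reduce to the Euclidean case, invoke Rademacher twice (once for $f$, once for a Lipschitz parametrization of the subdifferential graph built from $f$), and then upgrade almost-everywhere differentiability of the gradient into a genuine second-order Taylor expansion by exploiting convexity. Throughout I choose an inner product on $X$ (all norms are equivalent and all the conclusions we need are basis-free and preserved by bilipschitz maps), so I may assume $X$ is Euclidean with inner product $\langle\cdot,\cdot\rangle$.

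First I would observe that a convex function on a finite-dimensional normed space is automatically locally Lipschitz: boundedness of $f$ on a neighbourhood of any $x_0$ (which follows from convexity applied to a simplex around $x_0$) gives, via the three-slope inequality, a uniform Lipschitz bound on a smaller neighbourhood. Hence \Cref{t:rad} applies and shows that $f$ is (Fr\'echet) differentiable outside a set $N_1$ of Hausdorff measure zero. At each such point, the unique vector $\nabla f(x)$ represents $df(x)$ via the inner product, and it is standard that $\nabla f$ agrees with the subdifferential single-valuedly.

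Next I would apply the Minty parametrization. Consider the monotone operator $T = I + \partial f$, where $\partial f$ denotes the convex-analysis subdifferential. Since $f$ is convex and finite everywhere, $T$ is maximal monotone, and its resolvent $J := T^{-1}$ is a \emph{single-valued} 1-Lipschitz map $J: X \to X$ (Minty's theorem). Applying \Cref{t:rad} to $J$ shows that $J$ is differentiable outside a null set $N_2 \subset X$. Because $J$ is Lipschitz, the image $J(N_2)$ is still null, and because $f$ is locally Lipschitz the set $N_1$ lifts through the correspondence $x \mapsto T(x) = x + \nabla f(x)$ to a null set as well. Thus for Lebesgue-almost every $x$, both $\nabla f(x)$ exists and $J$ is differentiable at $T(x)$; at each such $x$, $\nabla f = T - I$ has a linear ``derivative'' $A(x)$ in the sense that $\nabla f(x+h) - \nabla f(x) = A(x) h + o(\|h\|)$ holds along the set of points where $\nabla f$ is defined. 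By convexity of $f$ the matrix $A(x)$ is symmetric and positive semi-definite.

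The main obstacle is the last step: upgrading this derivative of $\nabla f$ (which a priori only holds along the full-measure set where $\nabla f$ exists) to a pointwise second-order Taylor expansion
\[
    f(x+h) - f(x) - \langle \nabla f(x), h \rangle - \tfrac12 \langle A(x) h, h\rangle = o(\|h\|^2).
\]
Here I would use the integral representation $f(x+h) - f(x) - \langle \nabla f(x), h\rangle = \int_0^1 \langle \nabla f(x+th) - \nabla f(x), h\rangle \, dt$ (valid for a.e.\ segment by Fubini, since $\nabla f$ is defined a.e.), combined with the monotonicity bound $\langle \nabla f(x+th) - \nabla f(x), h\rangle \geq 0$ from convexity to control the integrand uniformly. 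A Vitali/density argument at points $x$ where $J$ is differentiable then squeezes the integral between the quadratic form $\tfrac12\langle A(x) h, h\rangle$ and a negligible remainder in \emph{every} direction $h$, not just along the a.e.\ set; this is precisely the classical bridge between ``derivative of $\nabla f$ exists a.e.'' and ``second-order Taylor expansion holds a.e.'', and it is the technical heart of Alexandrov's theorem. Symmetry of $d^2 f(x) := A(x)$ then follows, completing the proof.
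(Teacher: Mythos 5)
The paper does not prove this statement: Alexandrov's theorem is quoted with a citation to \cite{alex} and used as a black box, so there is no internal proof to compare against. Your plan is the standard modern route to Alexandrov's theorem (reduce to the Euclidean case, apply Rademacher to $f$ and to the Minty resolvent $J=(I+\partial f)^{-1}$, then upgrade to a second-order Taylor expansion), and that route does work; it is essentially the proof found in Crandall--Ishii--Lions or Evans--Gariepy rather than Alexandrov's original geometric argument. Two points in your sketch need more than a gesture, though. First, to define $A(x)$ you implicitly invert $dJ$ at $y=T(x)$: you have $dJ(y)=(I+A(x))^{-1}$ only if $dJ(y)$ is nonsingular, and a priori the derivative of a $1$-Lipschitz map can be singular. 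The standard fix is that $J$ is surjective (since $\operatorname{dom}\partial f=X$), so by the area formula the image under $J$ of the set where $dJ$ exists but is singular is Lebesgue-null; hence for a.e.\ $x$ one can write $x=J(y)$ with $dJ(y)$ invertible and set $A(x):=dJ(y)^{-1}-I$, which is then symmetric and positive semi-definite. Second, the final step --- passing from ``$\nabla f$ has derivative $A(x)$ along the full-measure set where $\nabla f$ exists'' to the pointwise expansion $f(x+h)-f(x)-\langle\nabla f(x),h\rangle-\tfrac12\langle A(x)h,h\rangle=o(\|h\|^2)$ for \emph{all} $h$ --- is, as you say, the technical heart, and your sketch stops exactly there. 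The clean way to close it is to use that $J$ is genuinely Fr\'echet differentiable at $y$ (so the estimate $J(y+k)=J(y)+dJ(y)k+o(\|k\|)$ holds in every direction, not just a.e.), transfer this through $\nabla f=T-I$ to get a uniform estimate $\|\nabla f(x')-\nabla f(x)-A(x)(x'-x)\|=o(\|x'-x\|)$ valid for all $x'$ in the full-measure set where $\nabla f$ exists, and then integrate along a.e.\ segment and conclude for all $h$ by continuity of $f$. With those two steps made explicit your argument is complete; as written it is a correct outline with the hardest lemma still to be proved.
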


\section{Strong infinitesimal rigidity}\label{sec:main}

Previously we could only define the rigidity operator of well-positioned frameworks.
Since the rigidity map is Lipschitz,
we define any generalised derivative $\delta f_G(p)$ of $f_G$ at $p$ to be a \emph{generalised rigidity operator} of $(G,p)$.
As the space $\mathcal{T}(p)$ will lie in the kernel of all the rigidity operator limits that generate $\partial f_G(p)$,
it is immediate that $\mathcal{T}(p)$ will lie in the kernel of all the generalised rigidity operators of $(G,p)$.
By \Cref{l:genderiv}(\ref{l:genderiv1}),
a framework will be well-positioned if and only if it has a single generalised rigidity operator.
We can use generalised rigidity operators to give an alternative definition of infinitesimal rigidity.

\begin{proposition}\label{p:altdef}
    The set of infinitesimal flexes of a framework $(G,p)$ in a normed space $X$ is exactly the linear space
    \begin{align*}
        \mathcal{F}(G,p) := \bigcap_{\delta f_G(p) \in \partial f_G(p)} \ker \delta f_G(p).
    \end{align*}
    Hence $(G,p)$ is infinitesimally rigid if and only if $\mathcal{F}(G,p) = \mathcal{T}(p)$.
\end{proposition}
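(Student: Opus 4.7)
The plan is to reduce everything to a component-wise analysis using the convexity of each coordinate function $\phi_e : X^V \to \mathbb{R}$ defined by $\phi_e(p) = \|p_v - p_w\|$ for each edge $e = vw$. Each such $\phi_e$ is the composition of the norm with a linear map, so it is convex (and hence Lipschitz), and $f_G = (\phi_e)_{e \in E}$.

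First I would translate the infinitesimal flex condition into the language of Clarke subdifferentials. Since $\phi_e$ is convex, the two-sided limit required in the definition of an infinitesimal flex exists on edge $e$ and equals $0$ if and only if both one-sided directional derivatives $\lim_{t \searrow 0} \tfrac{1}{t}(\phi_e(p+tu)-\phi_e(p))$ and $\lim_{t \searrow 0} \tfrac{1}{t}(\phi_e(p-tu)-\phi_e(p))$ vanish. By \Cref{l:conv} these equal $\max_{F \in \partial \phi_e(p)} F(u)$ and $-\min_{F \in \partial \phi_e(p)} F(u)$ respectively, so $u$ is an infinitesimal flex if and only if $F(u)=0$ for every edge $e$ and every $F \in \partial \phi_e(p)$.

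Next I would link $\partial f_G(p)$ to the component subdifferentials via the coordinate projection $\pi_e$ that sends a linear map $T: X^V \to \mathbb{R}^E$ to its $e$-th row $T_e : X^V \to \mathbb{R}$. Since $df_G(q) = (d\phi_e(q))_{e \in E}$ wherever $f_G$ is differentiable, passing to limits and convex hulls immediately gives the easy inclusion $\pi_e(\partial f_G(p)) \subseteq \partial \phi_e(p)$; combined with the previous step this already yields one direction of the proposition, since any infinitesimal flex $u$ then satisfies $T_e(u) = 0$ for every $e$ and every $T \in \partial f_G(p)$, hence $T(u)=0$ and $u \in \mathcal{F}(G,p)$.

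The main obstacle is the reverse inclusion $\pi_e(\partial f_G(p)) \supseteq \partial \phi_e(p)$ -- informally, every subgradient of a single component must be realisable as the $e$-th row of some generalised Jacobian of the full map $f_G$. To prove this I would fix $F \in \partial \phi_e(p)$, approximate it by derivatives $d\phi_e(p_n)$ at points $p_n \to p$ where $f_G$ itself is differentiable (such points form a conull set by applying \Cref{t:rad} componentwise and taking a finite union), and then use the local boundedness and continuity of generalised derivatives given by \Cref{l:genderiv}(\ref{l:genderiv2}) and (\ref{l:genderiv3}) to pass to a subsequence along which every $d\phi_{e'}(p_n)$ converges; the stacked limit lies in $\partial f_G(p)$ and has $e$-th row equal to $F$. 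A convex combination argument then handles arbitrary $F \in \partial \phi_e(p)$. Combined with the first step this gives $\mathcal{F}(G,p) \subseteq \{\text{infinitesimal flexes}\}$, and the second assertion of the proposition follows at once, since $\mathcal{T}(p)$ already lies in the kernel of every generalised rigidity operator.
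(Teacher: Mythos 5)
Your argument is correct and follows essentially the same route as the paper's proof: reduce to the convex component functions $\phi_e$ and use \Cref{l:conv} to identify vanishing of the two one-sided directional derivatives with vanishing of every generalised derivative on $u$. The only real difference is that you explicitly verify the projection identity $\pi_e(\partial f_G(p)) = \partial \phi_e(p)$ linking the generalised Jacobian of the vector-valued map $f_G$ to the subdifferentials of its components (relying on the standard fact that the Clarke subdifferential is insensitive to null sets of non-differentiability), a step the paper's one-line proof leaves implicit.
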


\begin{proof}
    For each $u \in X^V$, define the one-way limit
    \begin{align*}
        f_G'(p;u) := \lim_{t \searrow 0} \frac{1}{t}(f_G(p+tu)-f_G(p)).
    \end{align*}
    Since each component function of $f_G$ is convex,
    it follows from \Cref{l:conv} that both $f_G'(p;u)$ and $f_G'(p;-u)$ exist,
    and $f'_G(p;u)=f'_G(p;-u)=0$ if and only if $\delta f_G(p)(u) = 0$ for all $\delta f_G(p) \in \partial f_G(p)$ as required.
\end{proof}

We will require later on that our set of equivalent frameworks will a local manifold structure at a given placement.
To facilitate this, we define the following.

\begin{definition}\label{def:constant}
    A framework $(G,p)$ in a normed space $X$ is \emph{weakly constant} if there exists a non-negative integer $k$ and an open neighbourhood $U \subset X^V$ of $p$ where for all $q \in U$ and $\delta f_G(q) \in \partial f_G(q)$, we have $\rank \delta f_G(q) = k$.
\end{definition}

A weakly constant framework will be constant (see \Cref{sec:intro2}) if and only if it has a neighbourhood of well-positioned frameworks;
hence every constant framework will also be weakly constant.
Since the rank function is lower semi-continuous,
any framework with only surjective generalised rigidity operators will be weakly constant by \Cref{l:genderiv}(\ref{l:genderiv3}).

The next result demonstrates why weakly constant is an important property for a framework.
We recall that a \emph{Lipschitz manifold} is a topological manifold where the transition maps are all locally Lipschitz.
We will also denote by $\mathcal{O}_p$ the set of placements of a graph $G=(V,E)$ in a normed space $X$ congruent to a placement $p \in X^V$.
In \cite{D21} it was shown $\mathcal{O}_p$ is a smooth manifold with tangent space $\mathcal{T}(p)$ at $p$.

\begin{lemma}\label{l:manifold}
    Let $(G,p)$ be a weakly constant framework in a normed space $X$.
    Then there exists an open neighbourhood $U$ of $p$ where the set $f_G^{-1}[f_G(p)] \cap U$ is a connected Lipschitz manifold of dimension $m$, where $m$ is the nullity of any of the generalised rigidity operators of $(G,p)$.
    Furthermore,
    the set $\mathcal{O}_p \cap U$ is a connected Lipschitz submanifold of $f_G^{-1}[f_G(p)] \cap U$ with dimension $\dim \mathcal{T}(p)$.
\end{lemma}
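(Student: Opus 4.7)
The plan is to apply the Lipschitzian constant rank theorem (\Cref{t:btv}) to the rigidity map $f_G$. By the weakly constant hypothesis there is an open neighbourhood $U_0 \subset X^V$ of $p$ on which every generalised derivative of $f_G$ at every point has rank $k$, and $f_G$ is locally Lipschitz because each of its component functions is a norm composed with a linear map. Applying \Cref{t:btv} at $p$ yields bilipschitz coordinate charts $\phi$ and $\psi$ around $p$ and $f_G(p)$ respectively, in which $f_G$ becomes the coordinate projection onto the first $k$ coordinates.

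Writing $N = |V|\dim X$, this identifies an open neighbourhood $U \subset U_0$ of $p$ bilipschitzly with an open set $W \subset \mathbb{R}^N$, and identifies $f_G^{-1}[f_G(p)] \cap U$ with the corresponding slice of $W$ at a fixed value of the first $k$ coordinates. Shrinking $W$ to a connected open box around $\phi^{-1}(p)$ (and $U$ accordingly), this slice is a connected open subset of an affine $(N-k)$-dimensional subspace; transporting this structure back to $X^V$ via $\phi$ equips $f_G^{-1}[f_G(p)] \cap U$ with the structure of a connected Lipschitz manifold of dimension $m = N - k$, which is exactly the nullity of any of the generalised rigidity operators. This gives the first assertion.

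For the second assertion, the cited result from \cite{D21} tells us that $\mathcal{O}_p$ is a smooth manifold of dimension $\dim \mathcal{T}(p)$ containing $p$, and since congruent placements share edge lengths we have $\mathcal{O}_p \subset f_G^{-1}[f_G(p)]$. Hence $\mathcal{O}_p \cap U$ is a smooth (and therefore Lipschitz) submanifold of $f_G^{-1}[f_G(p)] \cap U$, and a further shrinking of $U$ so that it sits inside a single smooth chart of $\mathcal{O}_p$ centred at $p$ secures connectedness. The main obstacle is really just bookkeeping: $U$ must simultaneously lie in the open set of constant rank, produce a connected coordinate slice, and intersect $\mathcal{O}_p$ in a connected set. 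Each of these properties is preserved under passage to a smaller open neighbourhood containing $p$, so three nested restrictions suffice; the only conceptual care needed is to observe that the bilipschitz submanifold structure on $\mathcal{O}_p \cap U$ is automatic, since the composition of the smooth embedding $\mathcal{O}_p \hookrightarrow U$ with the bilipschitz chart $\phi^{-1}$ is locally bilipschitz onto its image.
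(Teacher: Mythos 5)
Your proposal is correct and follows essentially the same route as the paper: both apply the Lipschitz constant rank theorem (\Cref{t:btv}) on the neighbourhood of constant rank to realise $f_G^{-1}[f_G(p)] \cap U$ as an $m$-dimensional Lipschitz manifold, then use the smooth manifold structure of $\mathcal{O}_p$ from \cite{D21} (which lies inside the fibre) and local (path-)connectedness to shrink $U$ so that both sets are connected. Your write-up simply spells out the coordinate-slice and nested-shrinking bookkeeping that the paper leaves implicit.
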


\begin{proof}
    By \Cref{t:btv},
    there exists an open neighbourhood $U$ of $p$ where the set $f_G^{-1}[f_G(p)] \cap U$ is a Lipschitz submanifold of $X^V$ of dimension $m$.
    Since $\mathcal{O}_p \cap U$ is a Lipschitz submanifold of $X^V$ also,
    then $\mathcal{O}_p \cap U$ is a Lipschitz submanifold of $f_G^{-1}[f_G(p)] \cap U$ of dimension $\dim \mathcal{T}(p)$.
    Since Lipschitz manifolds are locally path-connected,
    we may suppose that $U$ is sufficiently small that both $f_G^{-1}[f_G(p)] \cap U$ and $\mathcal{O}_p \cap U$ are connected.
\end{proof}

We can now extend our equivalence of rigidity properties for constant frameworks to weakly constant frameworks.

\begin{lemma}\label{mainlem}
    Let $(G,p)$ be a weakly constant framework in a normed space $X$.
    Then the following are equivalent:
    \begin{enumerate}[(i)]
        \item\label{mainthm2item1} There exists a generalised rigidity operator $\delta f_G(p)$ of $(G,p)$ with $\ker \delta f_G(p) = \mathcal{T}(p)$.
        \item\label{mainthm2item2} $(G,p)$ is continuously rigid.
        \item\label{mainthm2item3} $(G,p)$ is locally rigid.
    \end{enumerate}
\end{lemma}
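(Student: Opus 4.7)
The plan is to establish the cycle (i) $\Rightarrow$ (iii) $\Rightarrow$ (ii) $\Rightarrow$ (i). The middle implication is already known in arbitrary normed spaces from \cite{D21}, so the work lies in the other two, both of which exploit the Lipschitz manifold structure produced by \Cref{l:manifold}.

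For (i) $\Rightarrow$ (iii): the weakly constant hypothesis and \Cref{l:manifold} yield a neighbourhood $U$ of $p$ on which $M := f_G^{-1}[f_G(p)] \cap U$ is a connected Lipschitz manifold of dimension $m$ (the common nullity of every generalised rigidity operator), and $\mathcal{O}_p \cap U$ is a connected Lipschitz submanifold of dimension $\dim \mathcal{T}(p)$. Assumption (i) supplies a generalised rigidity operator whose kernel equals $\mathcal{T}(p)$, forcing $m = \dim \mathcal{T}(p)$. Hence $\mathcal{O}_p \cap U$ and $M$ are Lipschitz manifolds of equal dimension, so Lipschitz invariance of domain makes $\mathcal{O}_p \cap U$ open in $M$. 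Since $\mathcal{O}_p$ is closed in $X^V$ by \Cref{l:isom} applied to $\mathcal{G} = \Iso(X)$, the subset $\mathcal{O}_p \cap U$ is also closed in $M$, and connectedness of $M$ forces $\mathcal{O}_p \cap U = M$. Shrinking $U$ to an $\varepsilon$-ball then establishes local rigidity.

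For (ii) $\Rightarrow$ (i), I will prove the contrapositive. If no generalised rigidity operator has kernel exactly $\mathcal{T}(p)$, then since $\mathcal{T}(p)$ lies in every such kernel and the weakly constant hypothesis provides a common nullity $m$, strict containment forces $m > \dim \mathcal{T}(p)$. Applying \Cref{l:manifold} again, $M = f_G^{-1}[f_G(p)] \cap U$ is a Lipschitz manifold of strictly larger dimension than the submanifold $\mathcal{O}_p \cap U$; in particular there exists a point $q \in M \setminus \mathcal{O}_p$. Local path-connectedness of Lipschitz manifolds then produces a continuous path $\gamma : [0,1] \to M \subset X^V$ from $p$ to $q$, which is by construction a continuous flex, and it is non-trivial because $\gamma(1) = q \notin \mathcal{O}_p$. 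This contradicts (ii).

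The main obstacle is justifying the two Lipschitz-manifold facts invoked above: invariance of domain, and that a Lipschitz submanifold of strictly smaller dimension is necessarily a proper subset. Both reduce to their classical Euclidean counterparts via the bilipschitz charts supplied by \Cref{t:btv}; those charts locally identify $M$ with an open subset of $\mathbb{R}^m$, and $\mathcal{O}_p \cap U$ (after composing with analogous charts coming from the manifold structure of $\mathcal{O}_p$) with the image of a subspace of dimension $\dim \mathcal{T}(p)$. Brouwer's invariance of domain, together with the fact that bilipschitz maps preserve Hausdorff dimension, then finishes the argument.
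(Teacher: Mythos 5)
Your proposal is correct and follows essentially the same route as the paper: both rest on the Lipschitz manifold structure from \Cref{l:manifold}, compare the dimension $m$ of $f_G^{-1}[f_G(p)]\cap U$ with $\dim\mathcal{T}(p)$, and use local path-connectedness of Lipschitz manifolds to pass between local and continuous rigidity (the paper defers the latter to the proof of \cite[Theorem 1.1]{D21}). You merely make explicit the details the paper leaves implicit, namely that equal dimension forces $\mathcal{O}_p\cap U=f_G^{-1}[f_G(p)]\cap U$ via invariance of domain plus closedness and connectedness, and that a strictly larger $m$ yields a non-congruent equivalent placement joined to $p$ by a path.
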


\begin{proof}
    By \Cref{l:manifold},
    there exists an open neighbourhood $U$ of $p$ where $f_G^{-1}[f_G(p)] \cap U = \mathcal{O}_p \cap U$ if and only if $\dim \ker \delta f_G(p) = \dim \mathcal{T}(p)$ for some $\delta f_G(p) \in \partial f_G(p)$.
    Hence (\ref{mainthm2item1}) and (\ref{mainthm2item2}) are equivalent.
    Since Lipschitz manifolds are locally path-connected,
    the rest of the proof is identical to \cite[Theorem 1.1]{D21}.
\end{proof}

Although \Cref{mainlem} is useful,
having to check the rank of every generalised rigidity operator of every framework in some neighbourhood will not be practical.
Checking the rank of all the generalised rigidity operators of a single framework, however, is a much more tractable problem due to the convexity of $\partial f_G(p)$.
With this in mind, 
we define the following.

\begin{definition}\label{def:inf}
    Let $(G,p)$ be a framework in a normed space $X$.
    We define $(G,p)$ to be \emph{strongly infinitesimally rigid} if for every generalised rigidity operator $\delta f_G(p)$ of $(G,p)$ we have $\ker \delta f_G(p) = \mathcal{T}(p)$;
    otherwise we say $(G,p)$ is \emph{strongly infinitesimally flexible}.
\end{definition}

We first make the following observations about our new property.
The first follows, in part, from \Cref{l:genderiv}(\ref{l:genderiv1}), while the latter observation can be seen by checking the rank of any generalised rigidity operator.

\begin{observation}
    Every strongly infinitesimally rigid framework is infinitesimally rigid,
    and every well-positioned infinitesimally rigid framework is strongly infinitesimally rigid also.
\end{observation}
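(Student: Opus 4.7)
The plan is to deduce both implications almost immediately from the characterisation
\begin{align*}
    \mathcal{F}(G,p) = \bigcap_{\delta f_G(p) \in \partial f_G(p)} \ker \delta f_G(p)
\end{align*}
supplied by Proposition~\ref{p:altdef}, combined with the fact (recorded in the paragraph just above that proposition) that $\mathcal{T}(p) \subseteq \ker \delta f_G(p)$ for every generalised rigidity operator $\delta f_G(p)$.

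For the first implication, I would assume that $(G,p)$ is strongly infinitesimally rigid, so that $\ker \delta f_G(p) = \mathcal{T}(p)$ for every $\delta f_G(p) \in \partial f_G(p)$. The intersection above then collapses to $\mathcal{T}(p)$, so Proposition~\ref{p:altdef} yields $\mathcal{F}(G,p) = \mathcal{T}(p)$; that is, $(G,p)$ is infinitesimally rigid.

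For the second implication, I would appeal to Lemma~\ref{l:genderiv}(\ref{l:genderiv1}): since $(G,p)$ is well-positioned, $\partial f_G(p)$ contains precisely one element, say $\delta f_G(p)$. Then the intersection defining $\mathcal{F}(G,p)$ reduces to the single kernel $\ker \delta f_G(p)$, so infinitesimal rigidity together with Proposition~\ref{p:altdef} forces $\ker \delta f_G(p) = \mathcal{T}(p)$. Since there is only one generalised rigidity operator to inspect, this is exactly strong infinitesimal rigidity by Definition~\ref{def:inf}.

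There is no genuine obstacle here --- both halves amount to bookkeeping on top of Proposition~\ref{p:altdef} and Lemma~\ref{l:genderiv}. The only subtlety worth flagging is that the intersection defining $\mathcal{F}(G,p)$ really does equal $\mathcal{T}(p)$ under the strong infinitesimal rigidity hypothesis (rather than merely being contained in it); this uses the containment $\mathcal{T}(p) \subseteq \ker \delta f_G(p)$, which in turn is inherited from the fact that trivial infinitesimal flexes survive any limit of Fr\'echet derivatives $df_G(x_n) \to \delta f_G(p)$ and are preserved under the convex combinations in the definition of $\partial f_G(p)$.
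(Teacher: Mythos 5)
Your proposal is correct and follows essentially the route the paper indicates: the first implication is immediate from Proposition~\ref{p:altdef} (the intersection of kernels collapses to $\mathcal{T}(p)$, using that $\partial f_G(p)$ is non-empty), and the second combines Proposition~\ref{p:altdef} with Lemma~\ref{l:genderiv}(\ref{l:genderiv1}), exactly as the paper's one-line justification suggests. No gaps.
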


(There do, however, exist infinitesimally rigid frameworks that are not strongly infinitesimally rigid;
for instance, the framework defined in \Cref{fig2}(ii).)

\begin{observation}
    If $(G,p)$ is a strongly infinitesimally rigid framework in a normed space $X$,
    then, given $G=(V,E)$, we have $|E| \geq (\dim X) |V| - \dim \mathcal{T}(p)$.
\end{observation}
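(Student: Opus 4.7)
The plan is to apply the rank-nullity theorem to any generalised rigidity operator of $(G,p)$. First, note that since every norm on $X$ is $1$-Lipschitz with respect to itself, the rigidity map $f_G : X^V \to \mathbb{R}^E$ is Lipschitz (each coordinate is a composition of an affine map with a norm). In particular $f_G$ is locally Lipschitz, so by \Cref{l:genderiv}(\ref{l:genderiv2}) the set $\partial f_G(p)$ is non-empty, and we may choose a generalised rigidity operator $\delta f_G(p) \in \partial f_G(p)$.

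Next, by the assumption of strong infinitesimal rigidity (\Cref{def:inf}), this chosen $\delta f_G(p)$ satisfies $\ker \delta f_G(p) = \mathcal{T}(p)$. Viewing $\delta f_G(p)$ as a linear map between the finite-dimensional real vector spaces $X^V$ and $\mathbb{R}^E$, the rank-nullity theorem yields
\begin{align*}
    \rank \delta f_G(p) \;=\; \dim X^V - \dim \ker \delta f_G(p) \;=\; (\dim X)\,|V| - \dim \mathcal{T}(p).
\end{align*}

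Finally, the rank of any linear map into $\mathbb{R}^E$ is bounded above by $\dim \mathbb{R}^E = |E|$, so combining with the equation above gives $|E| \geq (\dim X)|V| - \dim \mathcal{T}(p)$, as required. There is no serious obstacle here: the only subtle point is verifying that at least one generalised rigidity operator exists, which is handled by the Lipschitz property of $f_G$ together with \Cref{l:genderiv}(\ref{l:genderiv2}); the rest is a direct application of rank-nullity.
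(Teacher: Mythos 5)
Your proof is correct and follows exactly the route the paper intends: the paper's justification is the one-line remark that the observation ``can be seen by checking the rank of any generalised rigidity operator,'' and your argument simply spells this out via rank--nullity applied to a generalised rigidity operator whose kernel is $\mathcal{T}(p)$, together with the bound $\rank \delta f_G(p) \leq |E|$. Nothing further is needed.
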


Importantly, all strongly infinitesimally rigid frameworks are also weakly constant.

\begin{lemma}\label{l:infimplconst}
    If a framework is strongly infinitesimally rigid,
    then it is weakly constant.
    Furthermore,
    every framework in a sufficiently small neighbourhood will also be strongly infinitesimally rigid.
\end{lemma}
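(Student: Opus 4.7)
The plan is to combine the upper semi-continuity of the set-valued map $q \mapsto \partial f_G(q)$ (from \Cref{l:genderiv}(\ref{l:genderiv3})) with the lower semi-continuity of the rank function, and to exploit the fact that $\mathcal{T}(q)$ always sits inside the kernel of every generalised rigidity operator at $q$. Let $k := \dim X^V - \dim \mathcal{T}(p)$. Strong infinitesimal rigidity of $(G,p)$ says that every $\delta f_G(p) \in \partial f_G(p)$ has kernel exactly $\mathcal{T}(p)$, and therefore rank exactly $k$.

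First I would check that $\dim \mathcal{T}(q) \geq \dim \mathcal{T}(p)$ for all $q$ in a neighbourhood of $p$. Fixing a basis $T_1,\ldots,T_m$ of the tangent space $T_I \Iso(X)$, one has $\mathcal{T}(q) = \spann\{T_1(q),\ldots,T_m(q)\}$, and the dimension of this span is the rank of a matrix whose entries depend continuously on $q$; lower semi-continuity of rank then gives the desired bound. Since $\mathcal{T}(q) \subseteq \ker \delta f_G(q)$ for every generalised rigidity operator at $q$, this gives the upper bound $\rank \delta f_G(q) \leq \dim X^V - \dim \mathcal{T}(q) \leq k$ for $q$ near $p$.

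For the matching lower bound, suppose towards a contradiction that there exist $q_n \to p$ and $\delta f_G(q_n) \in \partial f_G(q_n)$ with $\rank \delta f_G(q_n) < k$. By \Cref{l:genderiv}(\ref{l:genderiv3}), for any $\varepsilon_n \searrow 0$ we may write $\delta f_G(q_n) = T_n + E_n$ with $T_n \in \partial f_G(p)$ and $\|E_n\| < \varepsilon_n$. Since $\partial f_G(p)$ is compact by \Cref{l:genderiv}(\ref{l:genderiv2}), passing to a subsequence we may assume $T_n \to T \in \partial f_G(p)$, hence $\delta f_G(q_n) \to T$. By hypothesis $\rank T = k$, but lower semi-continuity of rank forces $\liminf_n \rank \delta f_G(q_n) \geq \rank T = k$, contradicting $\rank \delta f_G(q_n) < k$. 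Hence $\rank \delta f_G(q) = k$ for every $\delta f_G(q) \in \partial f_G(q)$ and every $q$ in a sufficiently small neighbourhood of $p$, which is exactly weak constancy.

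Finally, from the equality $\dim \ker \delta f_G(q) = \dim X^V - k = \dim \mathcal{T}(p)$ and the chain $\mathcal{T}(q) \subseteq \ker \delta f_G(q)$ with $\dim \mathcal{T}(q) \geq \dim \mathcal{T}(p)$, we deduce $\dim \mathcal{T}(q) = \dim \mathcal{T}(p)$ and hence $\ker \delta f_G(q) = \mathcal{T}(q)$ for every $\delta f_G(q) \in \partial f_G(q)$. Thus $(G,q)$ is strongly infinitesimally rigid for all $q$ in a neighbourhood of $p$. The main technical hurdle is the compactness/selection argument combining \Cref{l:genderiv}(\ref{l:genderiv2}) with \Cref{l:genderiv}(\ref{l:genderiv3}) to produce a limit point in $\partial f_G(p)$; the rest is bookkeeping with semi-continuities of rank applied both to the generalised rigidity operators and to the tangent family defining $\mathcal{T}(q)$.
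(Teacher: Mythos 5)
Your proof is correct, and its core is the same as the paper's: combine the upper semi-continuity of $q \mapsto \partial f_G(q)$ from \Cref{l:genderiv}(\ref{l:genderiv3}) with the lower semi-continuity of the rank function to get $\rank \delta f_G(q) \geq k$ near $p$, then sandwich against the upper bound coming from $\mathcal{T}(q) \subseteq \ker \delta f_G(q)$. Your compactness/selection argument for the lower bound is a correct (if slightly more laborious) unpacking of what the paper dispatches in one line; one could equally note that the set of operators of rank at least $k$ is open, so the compact set $\partial f_G(p)$ has an $\varepsilon$-neighbourhood contained in it, and then apply \Cref{l:genderiv}(\ref{l:genderiv3}) once.

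Where you genuinely diverge is in pinning down $\dim \mathcal{T}(q)$. The paper establishes $\dim \mathcal{T}(q) = \dim \mathcal{T}(p)$ on a neighbourhood by citing external results of \cite{D21} (Prop.~3.13 and Lemma~4.9(ii)), which forces it to treat the degenerate case $|V| \leq \dim X$ separately. You instead observe that $\mathcal{T}(q)$ is the image of the linear evaluation map on a fixed basis $T_1,\dots,T_m$ of $T_I\Iso(X)$, whose matrix depends continuously on $q$, so lower semi-continuity of rank gives $\dim \mathcal{T}(q) \geq \dim \mathcal{T}(p)$ directly; the sandwich $k \leq \rank \delta f_G(q) \leq (\dim X)|V| - \dim \mathcal{T}(q) \leq k$ then forces equality everywhere at once, yielding weak constancy, $\dim\mathcal{T}(q)=\dim\mathcal{T}(p)$, and $\ker\delta f_G(q)=\mathcal{T}(q)$ simultaneously. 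This is self-contained, avoids the case split and the citations, and is arguably cleaner than the published argument.
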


\begin{proof}
    Let $(G,p)$ be a strongly infinitesimally rigid framework in a normed space $X$,
    i.e.~the rank of every generalised rigidity operator of $(G,p)$ is $(\dim X)|V|-\dim \mathcal{T}(p)$.
    Then by \Cref{l:genderiv}(\ref{l:genderiv3}) and the lower semi-continuity of the rank function,
    there exists an open neighbourhood $U'$ of $p$ where for all $q \in U'$ and $\delta f_G(q) \in \partial f_G(q)$ we have
    \begin{equation}\label{eq1}
        (\dim X)|V| - \dim \mathcal{T}(p) \leq \rank \delta f_G(q) \leq (\dim X)|V| - \dim \mathcal{T}(q).
    \end{equation}
    If $|V| \leq \dim X$ then by \cite[Prop. 5.7 \& Thm 5.8]{D21},
    we have that $X$ is Euclidean and $(G,p)$ is independent,
    so we may suppose $|V| \geq \dim X +1$.
    By \cite[Prop. 3.13 \& Lemma 4.9(ii)]{D21},
    there exists an open set $U \subset U'$ where $p \in U$ and $\dim \mathcal{T}(q) = \dim \mathcal{T}(p)$ for all $q \in U$.
    Combining this with \cref{eq1},
    we have that $(G,p)$ is constant and every framework in $U$ is also infinitesimally rigid.
\end{proof}

We are now ready to prove our main result of the section.

\begin{theorem}\label{mainthm}
    Let $X$ be a normed space. 
    Then every strongly infinitesimally rigid framework in $X$ is locally rigid,
    and every locally rigid framework in $X$ is continuously rigid.
\end{theorem}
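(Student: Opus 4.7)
The plan is to prove the theorem by combining \Cref{l:infimplconst} with \Cref{mainlem} for the first implication, and to use a standard clopen-set connectedness argument for the second implication.

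For the first implication, suppose $(G,p)$ is strongly infinitesimally rigid in $X$. By \Cref{l:infimplconst}, $(G,p)$ is weakly constant, so \Cref{mainlem} applies. By strong infinitesimal rigidity, every generalised rigidity operator $\delta f_G(p)$ satisfies $\ker \delta f_G(p) = \mathcal{T}(p)$; in particular, at least one does, which is condition (\ref{mainthm2item1}) of \Cref{mainlem}. Hence $(G,p)$ is locally rigid by the equivalence (\ref{mainthm2item1}) $\Leftrightarrow$ (\ref{mainthm2item3}). This step is essentially a direct invocation of the preceding lemmas and requires no further work.

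For the second implication, suppose $(G,p)$ is locally rigid with witnessing constant $\varepsilon > 0$, and let $\gamma : [0,1] \to X^V$ be a continuous flex with $\gamma(0) = p$. I would consider the set
\begin{align*}
    S := \{ t \in [0,1] : \gamma(t) \in \mathcal{O}_p \},
\end{align*}
and show that $S$ is both open and closed in $[0,1]$. Closedness follows because $\mathcal{O}_p$ is a closed subset of $X^V$ (this was recalled just before \Cref{l:manifold}) and $\gamma$ is continuous. For openness, suppose $t_0 \in S$, so $\gamma(t_0) = g \circ p$ for some isometry $g \in \Iso(X)$. The map $t \mapsto g^{-1} \circ \gamma(t)$ is then continuous with value $p$ at $t_0$, and since $g^{-1}$ is an isometry it sends equivalent placements to equivalent placements; thus $g^{-1} \circ \gamma(t)$ is equivalent to $p$ for all $t$, and lies within $\varepsilon$ of $p$ for $t$ sufficiently close to $t_0$. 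Local rigidity then forces $g^{-1} \circ \gamma(t)$ to be congruent to $p$, and consequently so is $\gamma(t)$. Hence $t \in S$ for all $t$ in a neighbourhood of $t_0$, so $S$ is open. Since $0 \in S$ and $[0,1]$ is connected, $S = [0,1]$, which is exactly the statement that $\gamma$ is trivial.

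The main subtlety, if any, lies in the second implication: one must verify that an isometry of $X$ really does map the $\varepsilon$-neighbourhood of $p$ to an $\varepsilon$-neighbourhood of $g \circ p$ in the product metric on $X^V$, and that equivalence is preserved under precomposition with isometries; both facts follow immediately from $\|g(x)-g(y)\|=\|x-y\|$ applied coordinatewise. Beyond this, the argument is purely formal once \Cref{l:infimplconst,mainlem} are in hand.
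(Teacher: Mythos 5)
Your proof is correct and follows the paper's route exactly for the first implication (combining \Cref{l:infimplconst} with \Cref{mainlem}), while for the second implication the paper simply defers to the proof of \cite[Theorem 1.1]{D21}; your clopen-connectedness argument is the standard one underlying that reference and is carried out correctly, using that $\mathcal{O}_p$ is closed (via \Cref{l:isom}) and that isometries preserve both equivalence and the $\varepsilon$-neighbourhood condition. No gaps.
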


\begin{proof}
    Every strongly infinitesimally rigid framework is locally rigid by \Cref{l:infimplconst,mainlem}.
    That every locally rigid framework is continuously rigid can be seen from the proof of \cite[Theorem 1.1]{D21}.
\end{proof}

Combining \Cref{l:infimplconst} with \Cref{mainthm},
we obtain the immediate corollary.

\begin{corollary}\label{maincor}
    Let $X$ be a normed space.
    If there exists a well-positioned infinitesimally rigid framework $(G,p)$ in $X$,
    then there exists $\varepsilon > 0$ such that if $(G,q)$ is a framework with $\|p_v-q_v\|<\varepsilon$ for all $v \in V$,
    then $(G,q)$ is (strongly) infinitesimally, locally and continuously rigid.
\end{corollary}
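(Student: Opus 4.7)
The plan is to assemble the corollary from the preceding observation and lemma essentially without new arguments. First I would invoke the observation stating that every well-positioned infinitesimally rigid framework is strongly infinitesimally rigid; applied to $(G,p)$ this yields that $(G,p)$ itself is strongly infinitesimally rigid.

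Next I would apply the second conclusion of \Cref{l:infimplconst}, which asserts that strong infinitesimal rigidity is an open condition: there exists an $\varepsilon > 0$ such that every framework $(G,q)$ with $\|p_v - q_v\| < \varepsilon$ for all $v \in V$ is strongly infinitesimally rigid. I would also note that this same $\varepsilon$ makes each such $(G,q)$ weakly constant (the first conclusion of \Cref{l:infimplconst}), although for the corollary's statement this is only needed implicitly via \Cref{mainthm}.

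To pass from strong infinitesimal rigidity of $(G,q)$ to the remaining three rigidity properties, I would then cite the observation that strong infinitesimal rigidity implies infinitesimal rigidity, and apply \Cref{mainthm} to obtain that $(G,q)$ is locally rigid and hence continuously rigid. Stringing these implications together gives all four rigidity properties for every $q$ in the $\varepsilon$-neighbourhood of $p$.

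There is no real obstacle here since the corollary is a direct packaging of the earlier results; the only point worth flagging is that the hypothesis that $(G,p)$ is well-positioned is used \emph{only} to upgrade infinitesimal rigidity at $p$ to strong infinitesimal rigidity at $p$, after which the openness statement in \Cref{l:infimplconst} carries the conclusion uniformly to a neighbourhood even though nearby frameworks need not themselves be well-positioned.
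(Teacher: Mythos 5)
Your proposal is correct and follows essentially the same route as the paper, which simply states that the corollary follows by combining \Cref{l:infimplconst} with \Cref{mainthm}; your write-up is just the natural unpacking of that one-line argument, including the (implicitly used) observation that well-positioned plus infinitesimally rigid gives strong infinitesimal rigidity at $p$.
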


\begin{remark}
    Neither of the converse statements given in \Cref{mainthm} will hold in general.
    An example of a locally rigid but (strongly) infinitesimally flexible framework in the Euclidean plane can be seen in \Cref{fig2}(i).
    For an example of a continuously rigid but locally flexible framework, we refer the reader to \cite{D21}.
\end{remark}

\section{Types of rigidity requiring the Hessian of the norm}\label{sec:2nd}

\subsection{Prestress stability}
Since infinitesimal rigidity is a strictly stronger property than local/continuous rigidity in Euclidean spaces,
one may start to wonder if there is a type of rigidity that lies in between.
One idea first put forward by Connelly and Whiteley in \cite{conn96} was to observe properties of the second-order derivative of the rigidity map.
Unlike with Euclidean spaces, we have to be slightly careful here, since the distance constraints of our frameworks are not guaranteed to be twice-differentiable. 

\begin{definition}\label{def:2ndwp}
    A framework $(G,p)$ in a normed space $X$ is \emph{second-order well-positioned} if the norm is twice-differentiable at each point $p_v-p_w$ for every $vw \in E$.
\end{definition}

The next result now follows immediately from \Cref{t:rad,t:alex}.

\begin{corollary}
    The set of (second-order) well-positioned placements of a graph $G=(V,E)$ in a normed space $X$ is a conull subset of $X^V$.
\end{corollary}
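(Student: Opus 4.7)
The plan is to reduce the statement to Alexandrov's theorem applied to the norm, via a standard Fubini-type observation about preimages under the edge difference maps. Since twice-differentiability of the norm at $p_v-p_w$ for every edge $vw$ immediately implies (first-order) differentiability of each component $\|p_v-p_w\|$ of $f_G$, hence of $f_G$ itself, it suffices to handle the second-order case; the first-order claim will then follow a fortiori (so Rademacher is only needed implicitly through Alexandrov).

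First I would invoke Alexandrov's theorem (\Cref{t:alex}) applied to the norm $\|\cdot\|: X \to \mathbb{R}$, which is convex, to obtain a conull subset $D' \subset X$ on which the norm is twice-differentiable. Next, for each edge $vw \in E$, consider the linear projection $\pi_{vw}: X^V \to X$, $p \mapsto p_v - p_w$. I would argue that $\pi_{vw}^{-1}(D')$ is conull in $X^V$: fixing all coordinates $p_u$ for $u \neq v$, the restriction of $\pi_{vw}$ to the remaining copy of $X$ is a translation, so by Fubini the complement $\pi_{vw}^{-1}(X \setminus D')$ has Hausdorff measure zero in $X^V$.

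Finally, since $E$ is finite, the set of second-order well-positioned placements
\begin{align*}
    \bigcap_{vw \in E} \pi_{vw}^{-1}(D')
\end{align*}
is a finite intersection of conull subsets and hence conull. Because twice-differentiability of $\|\cdot\|$ at each $p_v-p_w$ forces $f_G$ to be (Fr\'echet) differentiable at $p$, this same set is contained in the set of well-positioned placements, which is therefore also conull. There is no real obstacle here; the only point requiring care is the Fubini step, and that is standard for surjective linear maps between finite-dimensional spaces.
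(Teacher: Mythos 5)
Your proof is correct and is essentially the argument the paper intends: the paper offers no written proof beyond asserting that the corollary "follows immediately" from Rademacher's and Alexandrov's theorems, and your write-up simply makes explicit the pullback/Fubini step for the surjective linear maps $p \mapsto p_v - p_w$ and the finite intersection over $E$. The only (harmless) deviation is that you bypass Rademacher entirely by deducing the first-order claim from the second-order one, whereas the paper cites both theorems.
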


We will now define $\varphi_x$ and $\Delta_x$ to be the derivative and the Hessian (if they exist) of the point $x$ in a normed space $X$.
With this, we are now ready for a new type of rigidity.

\begin{definition}\label{def:prestress}
    Let $(G,p)$ be a second-order well-positioned framework in a normed space $X$.
    We say that $(G,p)$ is \emph{prestress stable} if there exists $a, b \in \mathbb{R}^E$ so that: (i) $b_{vw} >0$ for each $vw \in E$, (ii) $a$ is a \emph{(normalised equilibrium) stress of $(G,p)$}, i.e.~$\sum_{w \in N(v)} a_{vw} \varphi_{p_v-p_w} =0$ for each $v \in V$ with neighbourhood $N(v)$, and (iii) the quadratic form 
    \begin{align*}
        H_{a,b} : X^V \rightarrow \mathbb{R},~ u \mapsto \sum_{vw \in E} a_{vw} \Delta_{p_v-p_w}(u_v-u_w,u_v-u_w) + b_{vw} \left( \varphi_{p_v-p_w}(u_v-u_w) \right)^2
    \end{align*}
    is non-negative with zero set $\mathcal{T}(p)$.
\end{definition}

\begin{remark}
    Readers more familiar to rigidity theory in Euclidean space will notice that our definition for prestress stability differs from that given by \cite{conn96}.
    Our definition is, however, equivalent to Connelly and Whiteley's in Euclidean spaces (see \Cref{c:equiv defs}), since we have opted to use the norm instead of the norm squared for our rigidity map.
\end{remark}

The usefulness of prestress stability stems from the following result.

\begin{theorem}\label{main2nd}
    Let $(G,p)$ be a second-order well-positioned framework in a normed space $X$.
    \begin{enumerate}[(i)]
        \item If $(G,p)$ is infinitesimally rigid, then it is prestress stable.
        \item If $(G,p)$ is prestress stable, then it is locally rigid.
    \end{enumerate}
\end{theorem}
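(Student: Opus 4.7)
The plan is to dispatch part (i) almost immediately and then to build a carefully chosen energy functional to prove part (ii) via Lemma \ref{l:minimum}.

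For part (i), I would simply take $a = 0$ and any $b \in \mathbb{R}^E$ with $b_{vw} > 0$ for each edge. The zero vector trivially satisfies the equilibrium stress condition. Since the framework is second-order well-positioned and hence well-positioned, infinitesimal rigidity gives $\ker df_G(p) = \mathcal{T}(p)$, and an infinitesimal flex is precisely a $u$ with $\varphi_{p_v-p_w}(u_v - u_w) = 0$ for all $vw \in E$. Then
\begin{align*}
H_{0,b}(u) = \sum_{vw \in E} b_{vw}\bigl(\varphi_{p_v-p_w}(u_v-u_w)\bigr)^2
\end{align*}
is non-negative with zero set exactly $\ker df_G(p) = \mathcal{T}(p)$, which is what is needed.

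For part (ii), the plan is to convert prestress stability into a Hessian positivity statement for an auxiliary scalar function and then invoke Lemma \ref{l:minimum}. Given a witnessing pair $(a, b)$, I would define
\begin{align*}
F : X^V \rightarrow \mathbb{R}, \quad F(q) := \sum_{vw \in E} a_{vw}\,\|q_v - q_w\| + \tfrac{1}{2}\sum_{vw \in E} b_{vw}\bigl(\|q_v - q_w\| - \|p_v - p_w\|\bigr)^2.
\end{align*}
Because each term depends only on the edge lengths, $F$ is invariant under the induced action of $\Iso(X)$ on $X^V$, so hypothesis (i) of the condition ``$f(g.x) = f(x)$'' is satisfied with $\mathcal{G}$ the induced isometry group (whose orbit through $p$ is $\mathcal{O}_p$ with tangent space $\mathcal{T}(p)$, by the setup recalled before Lemma \ref{l:manifold}). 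Second-order well-positionedness lets me differentiate $F$ twice at $p$ via the chain rule; a direct computation shows $dF(p) = 0$ (the linear term vanishes because $a$ is an equilibrium stress, and the quadratic penalty contributes nothing at $p$ since $\|q_v - q_w\| - \|p_v - p_w\|$ vanishes there), and $d^2F(p)(u,u) = H_{a,b}(u)$ (the stress term contributes the $\Delta$-part and the penalty term contributes the $\varphi^2$-part after the half cancels with the factor of $2$ from differentiating a square).

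By prestress stability, $d^2F(p)$ is positive semi-definite with zero set exactly $\mathcal{T}(p) = T_I \mathcal{G} \cdot p$. All four hypotheses of Lemma \ref{l:minimum} are therefore met, yielding a neighbourhood $U$ of $p$ in which $F^{-1}[F(p)] \cap U = \mathcal{O}_p \cap U$. The final step is to note that any framework $(G,q)$ equivalent to $(G,p)$ satisfies $\|q_v - q_w\| = \|p_v - p_w\|$ on every edge, so $F(q) = F(p)$; hence every equivalent $q$ sufficiently close to $p$ lies in $\mathcal{O}_p$, i.e.\ is congruent to $p$, giving local rigidity.

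The main obstacle is the Hessian computation: I have to verify that the product rule and chain rule can be applied to a sum in which individual summands involve the composition of $\|\cdot\|$ with the affine map $q \mapsto q_v - q_w$, where second-differentiability of the norm is only guaranteed at the specific points $p_v - p_w$. This is exactly what second-order well-positionedness provides, and the computation is routine, but some care is needed to write $d^2F(p)$ correctly on $X^V$ and to confirm that its zero set on $X^V$ really is $\mathcal{T}(p)$ (not a proper superset) — something that follows at once from the identification $d^2F(p) = H_{a,b}$ and clause (iii) of Definition \ref{def:prestress}.
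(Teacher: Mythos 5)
Your proposal is correct and follows essentially the same route as the paper: part (i) via the zero stress and a positive $b$, and part (ii) by building the energy $F(q)=\sum_{vw}a_{vw}\|q_v-q_w\|+\tfrac12\sum_{vw}b_{vw}(\|q_v-q_w\|-\|p_v-p_w\|)^2$ (which is exactly the paper's composition of edgewise quadratics with the rigidity map), verifying $dF(p)=0$ and $d^2F(p)=H_{a,b}$, and invoking Lemma \ref{l:minimum} together with the invariance of $F$ on equivalent placements. No gaps.
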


\begin{proof}
    First suppose $(G,p)$ is infinitesimally rigid.
    Choose the stress $a=(0)_{vw \in E}$ of $(G,p)$ and the vector $b = (1)_{vw \in E}$.
    We now note that $H_{a,b}(u) = \|df_G(p)(u)\|^2_{2}$, where $\|\cdot\|_{2}$ is the Euclidean norm on $\mathbb{R}^E$.
    Hence $H_{a,b}$ is non-negative with zero set $\mathcal{T}(p)$.
    
    Now suppose $(G,p)$ is prestress stable with respect to $a,b$.
    For each $vw$,
    let $f_{vw} :\mathbb{R} \rightarrow \mathbb{R}$ be the quadratic function where $f_{vw}'(\|p_v-p_w\|) =a_{vw}$ and $f_{vw}''(\|p_v-p_w\|) =b_{vw}>0$.
    Define the function $F:\mathbb{R}^E \rightarrow \mathbb{R}$ where $F(x) = (f_{vw}(x_{vw}))_{vw\in E}$.
    We now note that $F \circ f_G$ is a convex function that is twice-differentiable at $p$, and will have derivative $df_G(p)^T(a)=0$ and a Hessian with corresponding quadratic form $H_{a,b}$.
    By applying \Cref{l:minimum} to the function $F \circ f_G$ and normed space $X^V$ with norm $\sum_{v\in V} \|x_v\|$,
    there exists a neighbourhood $U$ of $p$ where for each $q \in U$ we have $F\circ f_G(q) = F\circ f_G(p)$ if and only if $q \in \mathcal{O}_p$.
    Since $F\circ f_G(q) = F\circ f_G(p)$ for all equivalent frameworks $(G,q)$,
    we have that $(G,p)$ is locally rigid.
\end{proof}

\begin{remark}\label{rem1}
    It can be seen from the proof of \Cref{main2nd} that for any stress $a\in \mathbb{R}^E$ of $(G,p)$ and any vector $b \in \mathbb{R}^E$ with positive coefficients,
    the set $\mathcal{T}(p)$ must lie in the zero set of $H_{a,b}$.
    To see this, we observe that if we restrict the corresponding convex function $F \circ f_G$ to the smooth manifold $\mathcal{O}_p$ of congruent placements, then it is everywhere zero.
    This justifies our definition of prestress stable requiring that the zero set of $H_{a,b}$ is exactly $\mathcal{T}(p)$ instead of just requiring that the zero set is contained in $\mathcal{T}(p)$.
\end{remark}

Although \Cref{def:prestress} gives a better understanding of how prestress stability works,
the more practical formulation is as follows.

\begin{proposition}\label{p:prestress}
    Let $(G,p)$ be a second-order well-positioned framework in a normed space $X$.
    Then $(G,p)$ is prestress stable if and only if there exists a stress $a \in \mathbb{R}^E$ of $(G,p)$ so that
    \begin{align*}
        H_a(u) := \sum_{vw \in E} a_{vw} \Delta_{p_v-p_w}(u_v-u_w,u_v-u_w) >0
    \end{align*}
    for all $u \in \ker df_G(p) \setminus \mathcal{T}(p)$.
\end{proposition}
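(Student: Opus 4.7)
The forward direction is immediate. If $(G,p)$ is prestress stable with stress $a$ and positive vector $b$, then any $u \in \ker df_G(p)$ has $(df_G(p)u)_{vw} = \varphi_{p_v-p_w}(u_v-u_w) = 0$ for every $vw \in E$, and so $H_{a,b}(u) = H_a(u)$. Since $H_{a,b}$ is non-negative with zero set $\mathcal{T}(p)$, the same stress $a$ already satisfies $H_a(u) > 0$ on $\ker df_G(p) \setminus \mathcal{T}(p)$.

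For the converse, fix a stress $a$ with $H_a(u) > 0$ on $\ker df_G(p) \setminus \mathcal{T}(p)$. The plan is to set $b := \lambda(1,\ldots,1) \in \mathbb{R}^E$ with $\lambda > 0$ to be chosen large. Choose a direct-sum decomposition $X^V = \mathcal{T}(p) \oplus V_1 \oplus V_2$, where $V_1$ complements $\mathcal{T}(p)$ inside $\ker df_G(p)$ and $V_2$ complements $\ker df_G(p)$. The quadratic form $u \mapsto \sum_{vw}(df_G(p)u)_{vw}^2$ vanishes on $\ker df_G(p)$ and is positive definite on $V_2$ with some matrix $A$; hence adding $\lambda$ times this form perturbs only the $V_2 \times V_2$ block of $H_a$. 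Granting (see below) that the first row and column of $H_a$ in this decomposition vanish, $H_{a,\lambda\mathbf{1}}$ has the block form
\[
\begin{pmatrix} 0 & 0 & 0 \\ 0 & P_{11} & P_{12} \\ 0 & P_{12}^{T} & P_{22}+\lambda A \end{pmatrix}
\]
with $P_{11} \succ 0$ (from the hypothesis) and $A \succ 0$. A Schur complement against the lower-right block then shows that for $\lambda$ large enough the $(V_1 \oplus V_2)$-subblock is positive definite, so $H_{a,\lambda\mathbf{1}}$ is positive semi-definite with radical exactly $\mathcal{T}(p)$.

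The main obstacle is showing that the symmetric bilinear form $B_a$ associated to $H_a$ vanishes on $\mathcal{T}(p) \times X^V$, which is strictly stronger than the quadratic-form vanishing $H_a(\tau,\tau) = 0$ for $\tau \in \mathcal{T}(p)$ given by \Cref{rem1}. I would work with the auxiliary function $g_0 : X^V \to \mathbb{R}$, $g_0(q) := \sum_{vw \in E} a_{vw}\|q_v - q_w\|$, which is twice-differentiable at $p$ by second-order well-positionedness, satisfies $dg_0(p) = 0$ by the stress condition, and has Hessian $H_a$; \Cref{rem1} applied to $g_0$ already yields the quadratic-form vanishing. To upgrade to bilinear vanishing I would use the $\Iso(X)$-invariance of $g_0$: for any $\xi \in T_I \Iso(X)$, the orbit vector field $\Xi(q) := \frac{d}{dt}(e^{t\xi}\cdot q)|_{t=0}$ is affine in $q$, and invariance gives $g_0(e^{t\xi}\cdot(p + s\sigma)) = g_0(p + s\sigma)$ for every $\sigma \in X^V$ and all small $s,t$. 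Setting $\tau := \Xi(p) \in \mathcal{T}(p)$ and expanding both sides via the second-order Taylor approximation of $g_0$ at $p$, and using $dg_0(p) = 0$ together with $H_a(\tau,\tau) = 0$, the only $st$-cross-term on the left-hand side is $st\, B_a(\sigma,\tau)$, whereas the right-hand side carries no $st$-term; specialising to $s = t = \varepsilon$ and letting $\varepsilon \to 0$ forces $B_a(\sigma,\tau) = 0$. Since $\Xi(p)$ sweeps out all of $\mathcal{T}(p)$ as $\xi$ varies over $T_I \Iso(X)$ by \Cref{l:isom}, this gives the required radical property and completes the plan.
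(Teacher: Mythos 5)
Your argument is correct, and while your forward direction coincides with the paper's, your converse takes a genuinely different route. The paper's proof is much shorter: it sets $H_b := H_{a,b}-H_a$, observes that $H_b$ vanishes exactly on $\ker df_G(p)$, obtains $H_a=0$ on $\mathcal{T}(p)$ from \Cref{rem1}, and then simply scales, invoking \cite[Lemma 3.4.1]{conn96} to find $\lambda>0$ with $H_{a,\lambda b}(u)>0$ for all $u\notin\mathcal{T}(p)$ --- no choice of complements and no Schur complement appear. What your longer route buys is precisely the point that the paper delegates to that citation: the delicate vectors are those of the form $\tau+w$ with $\tau\in\mathcal{T}(p)$ large and $w$ small, and positivity of $H_a+\lambda H_b$ at such vectors is not a formal consequence of the quadratic-form vanishing $H_a|_{\mathcal{T}(p)}=0$, since for a form that is not yet known to be positive semi-definite the zero set need not be the radical, so a cross term $B_a(\tau,w)$ could a priori be negative and dominant. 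Your invariance-plus-Taylor argument for $g_0(q)=\sum_{vw\in E}a_{vw}\|q_v-q_w\|$ proves exactly that $\mathcal{T}(p)$ lies in the radical of $B_a$, which legitimises the block decomposition; note that taking $\sigma=0$ in the same expansion already yields $B_a(\tau,\tau)=0$, so you do not even need \Cref{rem1}, and since $\mathcal{T}(p)\subseteq\ker df_G(p)$ the added form $\lambda\sum_{vw\in E}\bigl(\varphi_{p_v-p_w}(u_v-u_w)\bigr)^2$ also has vanishing cross terms with $\mathcal{T}(p)$, so all of $H_{a,\lambda\mathbf 1}$, not just $H_a$, is block-diagonal with respect to $\mathcal{T}(p)\oplus(V_1\oplus V_2)$, as your conclusion requires. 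In short: the paper's converse is a two-line reduction to the Connelly--Whiteley scaling lemma with the trivial-flex directions handled implicitly, whereas yours is self-contained and makes the separation of $\mathcal{T}(p)$ explicit at the cost of length; both establish the same equivalence.
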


\begin{proof}
    Fix $a,b \in \mathbb{R}^E$ with $a$ being a stress of $(G,p)$ and $b_{vw} >0$ for all $vw \in E$. Define the non-negative quadratic $H_b := H_{a,b}-H_a$.
    It is immediate that $H_b(u) =0$ if and only if $u \in \ker df_G(p)$.
    Suppose first that $(G,p)$ is prestress stable with respect to $a,b \in \mathbb{R}^E$.
    Then we have that $H_{a}(u) = H_{a,b}(u) >0$ for all $u \in \ker df_G(p) \setminus \mathcal{T}(p)$.
    Now suppose that $H_a(u)(u) > 0$ for all $u \in \ker df_G(p) \setminus \mathcal{T}(p)$.
    Since $H_b=0$ on $\ker df_G(p)$,
    it follows from the observation made in \Cref{rem1} that $H_a =0$ on $\mathcal{T}(p)$.
    Since $H_a > 0$ on $\ker df_G(p) \setminus \mathcal{T}(p)$ and $H_b$ is non-negative and positive on $X^V \setminus \ker df_G(p)$,
    we can choose sufficiently large $\lambda >0$ so that $H_{\lambda b}(u) = \lambda H_b(u) > H_a(u)$ for all $u \notin \ker df_G(p)$ (see \cite[Lemma 3.4.1]{conn96}).
    It follows that $H_{a, \lambda b}(u) >0$ for all $u \notin \mathcal{T}(p)$ as required.
\end{proof}

\subsection{Second-order rigidity}
Connelly and Whiteley also described another weaker type of rigidity in \cite{conn96} which we will now define for all normed spaces.

\begin{definition}\label{def:2nd}
    Let $(G,p)$ be a second-order well-positioned framework in a normed space $X$.
    A pair $(u,u') \in X^V \times X^V$ is said to be a \emph{second-order (infinitesimal) flex} of $(G,p)$ if $u$ is an infinitesimal flex of $(G,p)$ and
    \begin{align*}
        \Delta_{p_v-p_w}(u_v-u_w,u_v-u_w) + \varphi_{p_v-p_w}(u_v'-u_w') = 0
    \end{align*}
    for every edge $vw \in E$.
    If every second-order flex of $(G,p)$ has the property that $u \in \mathcal{T}(p)$,
    then we say $(G,p)$ is \emph{second-order (infinitesimal) rigid}; otherwise we say $(G,p)$ is \emph{second-order (infinitesimal) flexible}.
\end{definition}

It is immediate from the definition that for second-order well-positioned frameworks, infinitesimal rigidity will always imply second-order rigidity.
We can also easily show the following implication.

\begin{proposition}\label{main prestress implies 2nd}
    Let $(G,p)$ be a second-order well-positioned framework in a normed space $X$.
    If $(G,p)$ is prestress stable, then it is second-order rigid.
\end{proposition}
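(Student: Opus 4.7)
The plan is to reduce the problem to the alternative characterisation of prestress stability given in \Cref{p:prestress}: it suffices to exhibit a stress $a \in \mathbb{R}^E$ of $(G,p)$ such that $H_a(u) > 0$ for every $u \in \ker df_G(p) \setminus \mathcal{T}(p)$. Given a second-order flex $(u,u')$, the goal is to show that $H_a(u) = 0$. Since a second-order well-positioned framework is in particular well-positioned and $u$ is by definition an infinitesimal flex, we have $u \in \ker df_G(p)$, so the positivity of $H_a$ on $\ker df_G(p) \setminus \mathcal{T}(p)$ forces $u \in \mathcal{T}(p)$, which is exactly second-order rigidity.

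The core computation is a discrete ``integration by parts''. The second-order flex equations $\Delta_{p_v-p_w}(u_v-u_w,u_v-u_w) = -\varphi_{p_v-p_w}(u'_v - u'_w)$ substitute directly into $H_a(u)$ to give
\begin{align*}
    H_a(u) = -\sum_{vw \in E} a_{vw}\, \varphi_{p_v-p_w}(u'_v - u'_w).
\end{align*}
Using linearity of $\varphi_{p_v-p_w}$ together with the oddness identity $\varphi_{-x} = -\varphi_x$ (which holds wherever the norm is differentiable, by evenness of the norm), the right-hand side reorganises as a vertex-indexed sum
\begin{align*}
    -\sum_{v \in V} \left(\sum_{w \in N(v)} a_{vw}\, \varphi_{p_v-p_w}\right)(u'_v),
\end{align*}
and each inner bracket is zero by the defining condition of a stress. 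Hence $H_a(u) = 0$, completing the argument.

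The only subtle point is bookkeeping around edge orientations: since $\varphi$ and $\Delta$ are defined at the unordered difference $p_v - p_w$, one must check that the summand $a_{vw}\varphi_{p_v-p_w}(u'_v - u'_w)$ is orientation-invariant, which is immediate from $\varphi_{-x} = -\varphi_x$; similarly $\Delta_{-x} = \Delta_x$ ensures $H_a$ itself is well-defined. Beyond this, no additional analytic input is required because all the heavy lifting (twice-differentiability at the edge vectors, existence and properties of $\varphi_{p_v-p_w}$ and $\Delta_{p_v-p_w}$) is built into the hypothesis that $(G,p)$ is second-order well-positioned.
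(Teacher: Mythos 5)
Your proof is correct and takes essentially the same approach as the paper: both reduce to \Cref{p:prestress} and show that $H_a(u)=0$ for any stress $a$ and any second-order flex $(u,u')$, using the second-order flex equation together with the stress condition and the oddness $\varphi_{-x}=-\varphi_x$. The only cosmetic difference is that the paper adds the (vanishing) vertex-indexed stress sum $\tfrac{1}{2}\sum_{v}\sum_{w\in N(v)}a_{vw}\varphi_{p_v-p_w}(u'_v-u'_w)$ to $H_a(u)$ and recombines it edgewise, whereas you substitute the flex equation first and then show the remaining edge sum regroups to zero over vertices --- the same computation read in opposite directions.
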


\begin{proof}
    Suppose $(G,p)$ has a second-order flex $(u,u')$ with $u \in \ker df_G(p) \setminus \mathcal{T}(p)$.
    We note that for any stress $a$ of $(G,p)$, we have
    \begin{eqnarray*}
        H_a(u) &=& \sum_{vw \in E} a_{vw} \Delta_{p_v-p_w}(u_v-u_w,u_v-u_w)  + \frac{1}{2} \sum_{v \in V}\sum_{w \in N(v)} a_{vw} \varphi_{p_v-p_w} (u'_v-u'_w) \\
        &=& \sum_{vw \in E} a_{vw} \left( \Delta_{p_v-p_w}(u_v-u_w,u_v-u_w) + \varphi_{p_v-p_w} (u'_v-u'_w) \right) \ = \ 0.
    \end{eqnarray*}
    Hence $(G,p)$ is not prestress stable by \Cref{p:prestress}.
\end{proof}

It is unclear whether second-order rigidity implies local or continuous rigidity in all normed spaces,
though the author would conjecture that this does not always hold (see \Cref{conj1}).

\section{Results for various classes of normed spaces}\label{sec:norm}

We shall now focus on two well-known specific classes of normed spaces.

\subsection{Polyhedral normed spaces}
We begin with \emph{polyhedral normed spaces};
i.e.~normed spaces $X$ where there exists a finite set of linear functionals $f_1,\dots,f_n:X \rightarrow \mathbb{R}$ so that $\|x\| = \max_{i=1,\ldots,n} |f_i(x)|$ for all $x \in X$.
Rigidity in such normed spaces was originally investigated by Kitson in \cite{kit}.

\begin{proposition}\label{p:poly}
    Let $(G,p)$ be a framework in a polyhedral normed space $X$.
    Then the following properties hold.
    \begin{enumerate}[(i)]
        \item\label{p:poly1} $(G,p)$ is locally rigid if and only if it is continuously rigid.
        \item\label{p:poly2} If $(G,p)$ is well-positioned, then it is second-order well-positioned.
        \item\label{p:poly3} If $(G,p)$ is well-positioned,
        then all types of rigidity (i.e.~(strong) infinitesimal rigidity, prestress stability, second-order rigidity, local rigidity, continuous rigidity) are all equivalent.
    \end{enumerate}
\end{proposition}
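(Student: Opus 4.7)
The plan is to tackle the three parts in the order (ii), (i), (iii), since (ii) supplies a structural fact used repeatedly in (iii) and rests on a piecewise-linear observation closely related to the one driving (i). For (ii), I would exploit the polyhedral structure directly: write $\|x\|=\max_i |f_i(x)|$. At a point $x$ where the norm is differentiable, a single signed functional $\pm f_i$ must strictly attain the maximum on a whole neighbourhood of $x$; otherwise two competing linear functionals would produce conflicting one-sided directional derivatives, contradicting differentiability. Consequently the norm agrees with a linear functional on a neighbourhood of each edge vector $p_v-p_w$, so $\Delta_{p_v-p_w}$ exists and is identically zero. This gives (ii) and also yields the key additional fact used in (iii), namely that each $\Delta_{p_v-p_w}$ vanishes.

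For (i), the implication ``locally rigid implies continuously rigid'' is already part of \Cref{mainthm}, so only the converse needs argument. The key observation is that each component $p\mapsto \|p_v-p_w\|$ of the rigidity map is a maximum of finitely many absolute values of linear functionals, so $f_G$ is a piecewise-linear map on $X^V$ and the level set $f_G^{-1}[f_G(p)]$ is a finite union of polyhedra. Such a set is locally path-connected at $p$, so there is a neighbourhood $U$ of $p$ in which every $q\in U\cap f_G^{-1}[f_G(p)]$ is joined to $p$ by a continuous path inside $f_G^{-1}[f_G(p)]$. If $(G,p)$ is continuously rigid each such path is trivial, so $q\in \mathcal{O}_p$, which is local rigidity.

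For (iii), assume $(G,p)$ is well-positioned. By the argument behind (ii), the norm is affine on a neighbourhood of every edge vector, so $f_G$ is itself affine on a neighbourhood of $p$; hence $(G,p)$ is constant (and therefore weakly constant), is second-order well-positioned, and satisfies $\Delta_{p_v-p_w}\equiv 0$ for every $vw\in E$. I would then close a cycle of implications: strong infinitesimal rigidity $\Rightarrow$ local rigidity $\Rightarrow$ continuous rigidity by \Cref{mainthm}; continuous rigidity $\Rightarrow$ infinitesimal rigidity by \Cref{mainlem}, noting that well-positionedness makes the unique generalised rigidity operator coincide with $df_G(p)$; infinitesimal equals strong infinitesimal by the well-positioned observation in \Cref{sec:main}; infinitesimal rigidity $\Rightarrow$ prestress stability by \Cref{main2nd}(i); prestress stability $\Rightarrow$ second-order rigidity by \Cref{main prestress implies 2nd}; and finally second-order rigidity $\Rightarrow$ infinitesimal rigidity because, with $\Delta_{p_v-p_w}\equiv 0$, every infinitesimal flex $u$ extends to a second-order flex $(u,0)$, so second-order rigidity forces $u\in \mathcal{T}(p)$.

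The main obstacle is really a single structural observation, the local linearity of the norm in (ii): once that is in hand, the remainder of the argument is careful bookkeeping of implications already established in \Cref{mainthm,mainlem,main2nd,main prestress implies 2nd}. Without the vanishing of $\Delta_{p_v-p_w}$ the closing step ``second-order $\Rightarrow$ infinitesimal'' in (iii) would fail, and without piecewise linearity the level-set argument in (i) would lose its local path-connectedness, so I would write (ii) first and then repeatedly cite it.
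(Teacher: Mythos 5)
Your proposal is correct and follows essentially the same route as the paper: both rest on the observation that a polyhedral norm is linear on a neighbourhood of each point of differentiability, so the rigidity map is piecewise linear (the paper makes this explicit via the cones $C_h$ on which $f_G$ agrees with a linear map $T_h$), giving (ii) with vanishing Hessians, the local path/segment argument for (i), and local linearity of $f_G$ at well-positioned placements for (iii). Your closing cycle for (iii), including second-order rigidity $\Rightarrow$ infinitesimal rigidity via $\Delta_{p_v-p_w}\equiv 0$, is exactly the reasoning the paper leaves implicit in its terse final sentence and the remark following the proposition.
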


\begin{proof}
    We may suppose that the norm of $X$ is generated by the finite set of linear functionals $S$ with $-S = S$.
    Order the vertices of $G$.
    Define $S^E$ to be the set of all maps from $E$ to $S$,
    and define for each map $h:E \rightarrow S$ in $S^E$ the linear map 
    \begin{align*}
            T_h :X^V \rightarrow \mathbb{R}^E,~ x \mapsto ( h_{vw}(x_v-x_w))_{vw \in E, ~ v > w}.
    \end{align*}
    Given $\preceq$ is the partial ordering on $\mathbb{R}^E$ given by the equivalence $a \leq b$ if and only if $a_{vw} \leq b_{vw}$ for all $vw \in E$,
    define for each $h \in S^E$ the (possibly empty) set 
    \begin{align*}
        C_h := \left\{ q \in X^V : T_h(q) \succeq T_{g}(q) \text{ for all } g \in S^E \right\}.
    \end{align*}
    Each set $C_h$ will have the following properties:
    (i) each $C_h$ will be either empty or a closed convex cone, 
    (ii) every point $q \in X^V$ lies in some $C_h$, and
    (iii) the restriction of the rigidity map to every $C_h$ is the linear map $T_h$.
    We now fix the set $\Phi(p) := \{ h \in E^S : p \in C_h\}$ and note that $\bigcup_{h \in \Phi(p)} C_h$ is a neighbourhood of $p$.
    
    If $(G,p)$ is locally rigid then it will be continuously rigid by \Cref{mainthm}.
    Suppose $(G,p)$ is locally flexible,
    i.e.~there exists a sequence of placement $(p^n)_{n \in \mathbb{N}}$ that converge to $p$ where each framework $(G,p^n)$ is equivalent but non-congruent to $(G,p)$.
    As $\bigcup_{h \in \Phi(p)} C_h$ is a neighbourhood of $p$,
    we must have for some sufficiently large $n$ that $p^n \in C_h$ for some $h \in \Phi(p)$.
    Since $f_G$ is linear on $C_h$,
    there exists a continuous path $\gamma : t \mapsto (1-t) p + t p^n$, $t \in [0,1]$, where $f_G(\gamma(t)) = f_G(p)$ for each $t \in [0,1]$ as required.
    
    Now suppose $(G,p)$ is well-positioned.
    We note that this is equivalent to $\Phi(p) = \{h\}$ for some $h$.
    Since $f_G$ is linear on $C_h$, which in turn is a neighbourhood of $p$,
    it follows that all types of rigidity for $(G,p)$ are equivalent as required.
\end{proof}

It follows from \Cref{p:poly} that prestress stability and second-order rigidity will never be useful things to consider when dealing with polyhedral normed spaces.
We further note that the conditions for prestress stability and second-order rigidity will always simplify to being exactly the condition for infinitesimal rigidity,
since the Hessian of any twice-differentiable point of a polyhedral norm will always be the zero matrix.

\subsection{\texorpdfstring{$\ell_p$}{Lp} normed spaces}
Our next class of normed space are the \emph{$\ell_p$ normed spaces}, denoted by $\ell_p^d = (\mathbb{R}^d,\|\cdot\|_p)$ for some $p \in [1, \infty]$\footnote{Whenever we are dealing with an $\ell_p$ space, we shall reserve $p$ to be a scalar and shall instead use $q$ to represent a placement of a graph.},
where for each $x=(x_1,\ldots,x_d) \in \mathbb{R}^d$ we have
\begin{align*}
    \|x\|_p := \left( \sum_{i=1}^d |x_i|^p \right)^{1/p} \text{ if $p \neq \infty$,} \qquad \|x\|_\infty := \max_{i=1,\ldots,d} |x_i|.
\end{align*}
Both $\ell_1^d$ and $\ell_\infty^d$ are polyhedral normed spaces,
so we shall focus on those $\ell_p$ normed spaces where $p \in (1,\infty)$.
Given $\sgn: \mathbb{R} \rightarrow \{-1,0,1\}$ is the sign function (i.e.~$\sgn(a) = 1$ if $a>0$, $\sgn(a) = -1$ if $a<0$, and $\sgn(a)=0$ if $a=0$) and $p \in (1,\infty)$, the norm $\|\cdot\|_p$ is a strictly convex function that is differentiable at every point $x\neq 0$ with gradient
\begin{align*}
    \frac{1}{\|x\|_p^{p-1}} \left( \sgn(x_1)|x_1|^{p-1}, ~ \ldots ~ , ~  \sgn(x_d)|x_d|^{p-1} \right),
\end{align*}
hence the set of (second-order) well-positioned placements in $\ell_p^d$ will always be an open conull set.
It was shown in \cite{kit-pow-1} that for any $p \in (1,\infty)$,
the set of infinitesimally rigid placements of a given graph in a $\ell_p$ normed space will either be an open conull set or empty.
We shall now show that the three types of ``finite'' rigidity are equivalent in $\ell_p$ normed spaces with $p \notin \{1,\infty\}$.
We will first need the following technical result.

\begin{theorem}[{\cite[Thm.~5.4.8]{analytic}}]\label{t:uniform}
    Let $S \subset \mathbb{R}^n$ be a real analytic set (i.e.~the zero set of a real analytic function defined on an open subset of $\mathbb{R}^n$).
    Then there exists a real analytic manifold $M$ and a real analytic map $\phi : M \rightarrow \mathbb{R}^n$ with $\phi(M) = \overline{S}$,
    where $\overline{S}$ is the closure of $S$.
\end{theorem}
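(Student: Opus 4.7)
The plan is to proceed by induction on the dimension of $\overline{S}$ as a real analytic set, exploiting the natural stratification of an analytic set into its regular and singular parts.

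The key structural input I would use is the following fact from real analytic geometry: for any real analytic subset $T \subset \mathbb{R}^n$ of dimension $k$, the singular locus $\operatorname{Sing}(T)$ is itself a real analytic subset of $\mathbb{R}^n$ of strictly smaller dimension, while the regular part $T \setminus \operatorname{Sing}(T)$ is a real analytic submanifold of $\mathbb{R}^n$. In particular, one has the decomposition $T = (T \setminus \operatorname{Sing}(T)) \cup \operatorname{Sing}(T)$.

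The induction then runs as follows. For the base case $\dim \overline{S} = 0$, the set $\overline{S}$ is a closed discrete collection of points in $\mathbb{R}^n$, which is itself a zero-dimensional real analytic manifold, so we may take $M := \overline{S}$ and $\phi$ the inclusion. For the inductive step, let $M_0 := \overline{S} \setminus \operatorname{Sing}(\overline{S})$, a real analytic manifold, and apply the inductive hypothesis to the real analytic set $\operatorname{Sing}(\overline{S})$ (which has strictly smaller dimension) to obtain a real analytic manifold $M_1$ and a real analytic map $\phi_1 : M_1 \to \mathbb{R}^n$ with $\phi_1(M_1) = \operatorname{Sing}(\overline{S})$. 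Setting $M := M_0 \sqcup M_1$ and letting $\phi$ be the inclusion on $M_0$ and $\phi_1$ on $M_1$, one obtains $\phi(M) = M_0 \cup \operatorname{Sing}(\overline{S}) = \overline{S}$.

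The main obstacle is the structural claim that $\operatorname{Sing}(T)$ is a real analytic set of strictly smaller dimension than $T$; this is the technical heart of the argument and requires non-trivial tools from real analytic geometry, typically via local decomposition into irreducible components together with Weierstrass-type preparation theorems (and one must also verify that $\overline{S}$ is itself a real analytic set, which is a standard but non-trivial point since zero sets of real analytic functions are closed in their domain of definition but may fail to extend analytically to the boundary). A more sophisticated alternative would be to invoke Hironaka's real analytic desingularization theorem directly to produce a connected real analytic manifold resolving $\overline{S}$ via a proper analytic map, but the inductive stratification approach is considerably more elementary and suffices for the present statement, which does not require $M$ to be connected.
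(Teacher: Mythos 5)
This statement is not proved in the paper at all: it is imported verbatim from Krantz--Parks as a black-box citation (it is a form of Hironaka's uniformization theorem for real analytic varieties), so there is no in-paper argument to compare against. Judged on its own terms, your proposal has a genuine gap at exactly the point you flag as ``the technical heart'': the claim that for a real analytic set $T$ the singular locus $\operatorname{Sing}(T)$ is again a real analytic set of strictly smaller dimension is \emph{false} over $\mathbb{R}$. This is one of the standard pathologies separating real from complex analytic geometry: real analytic sets need not be coherent, need not be pure-dimensional, and their loci of non-manifold points need not be real analytic. For the Whitney umbrella $W=\{x^2=zy^2\}\subset\mathbb{R}^3$, the set of points at which $W$ fails to be a manifold is the closed half-axis $\{x=y=0,\ z\geq 0\}$, which is not the zero set of any family of real analytic functions (any analytic function vanishing on it vanishes on the whole axis by the identity theorem); Cartan's umbrella $z(x^2+y^2)=x^3$ exhibits the non-coherence. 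Weierstrass preparation and decomposition into irreducible components do not control $\operatorname{Sing}$ over $\mathbb{R}$ the way they do over $\mathbb{C}$, so the induction has no valid engine.

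Two further problems compound this. First, $\overline{S}$ need not be a real analytic set: $S$ is cut out by a function defined only on an open $U\subset\mathbb{R}^n$, so $S$ is closed in $U$ but $\overline{S}$ may pick up points of $\partial U$ at which no analytic equation is available; hence ``$\dim\overline{S}$ as a real analytic set'' and ``$\operatorname{Sing}(\overline{S})$'' are not well-defined objects to induct on -- this is precisely why the theorem is stated for the closure and is genuinely hard. Second, even granting an analytic stratification, your map on $M_0$ is the inclusion of the regular part, whose image is $M_0$ itself rather than $\overline{M_0}$; points of $\overline{S}$ that are limits of regular points but lie neither in $M_0$ nor in the (closure of the) singular stratum are missed, so one only obtains $\phi(M)\subsetneq\overline{S}$ in general. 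The viable route is the one you relegate to an ``alternative'': Hironaka--Bierstone--Milman desingularization/uniformization, which is what Krantz and Parks actually invoke and which cannot be replaced by an elementary regular/singular induction.
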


\begin{proposition}\label{p:lp}
    Let $(G,q)$ be a framework in an $\ell_p$ normed space for $p \in (1, \infty)$.
    Then the following are equivalent:
    \begin{enumerate}[(i)]
        \item $(G,q)$ is locally rigid.
        \item $(G,q)$ is continuously rigid.
    \end{enumerate}
\end{proposition}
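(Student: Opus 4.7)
My plan is to handle the two directions separately. The implication (i) $\Rightarrow$ (ii) is immediate from Theorem~\ref{mainthm}. For the converse I argue the contrapositive: assuming $(G,q)$ is locally flexible, I construct a non-trivial continuous flex. Fix a sequence of equivalent non-congruent frameworks $(G,q^n)$ with $q^n \to q$.

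The first step is to pass to a stratum on which the rigidity map is real analytic. For each sign function $s : E \times \{1, \ldots, d\} \to \{-1, 0, +1\}$ set
\begin{align*}
\Sigma_s := \{q' \in (\mathbb{R}^d)^V : \sgn((q'_v - q'_w)_i) = s_{vw,i} \text{ for all } vw \in E, \, i \in \{1, \ldots, d\}\}.
\end{align*}
These finitely many strata partition $(\mathbb{R}^d)^V$, so after passing to a subsequence I may assume that every $q^n$ lies in a common $\Sigma_s$. Let $I_0 := \{(vw, i) : s_{vw, i} = 0\}$ and let $L \subseteq (\mathbb{R}^d)^V$ be the linear subspace cut out by $(q'_v - q'_w)_i = 0$ for all $(vw, i) \in I_0$. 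Being closed, $L$ contains each $q^n$ and hence also the limit $q$. On the relatively open subset $\Sigma_s$ of $L$, every coordinate $(q'_v - q'_w)_i$ with $(vw, i) \notin I_0$ has fixed nonzero sign $s_{vw, i}$, so $|(q'_v - q'_w)_i|^p = (s_{vw, i}(q'_v - q'_w)_i)^p$ is real analytic there, as the composition of the strictly positive linear function $q' \mapsto s_{vw, i}(q'_v - q'_w)_i$ with the map $t \mapsto t^p$ which is real analytic on $t > 0$. Summing over $i$, the map $q' \mapsto f_G(q')^p := (\|q'_v - q'_w\|_p^p)_{vw \in E}$ is real analytic on $\Sigma_s$.

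Set $S := \{q' \in \Sigma_s : f_G(q')^p = f_G(q)^p\}$; since $x \mapsto x^p$ is injective on $[0, \infty)$, $S$ coincides with the set of placements in $\Sigma_s$ equivalent to $q$, and it is a real analytic subset of the open set $\Sigma_s \subset L$. By Theorem~\ref{t:uniform} applied inside $L$, there exist a real analytic manifold $M$ and a real analytic map $\phi : M \to L$ with $\phi(M) = \overline{S}$. Continuity of $f_G$ gives $\overline{S} \subseteq \{q' : f_G(q') = f_G(q)\}$, and $q \in \overline{S}$ since $q^n \in S$ with $q^n \to q$. Invoking local path connectedness of $\overline{S}$ at $q$, for all sufficiently large $n$ there is a continuous path $\gamma : [0, 1] \to \overline{S}$ with $\gamma(0) = q$ and $\gamma(1) = q^n$. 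As $\overline{S}$ lies in the level set of $f_G$, this $\gamma$ is a continuous flex of $(G, q)$; since $q^n \notin \mathcal{O}_q$, the flex is non-trivial, contradicting continuous rigidity.

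The main obstacle is the last step: producing a path in $\overline{S}$ that joins $q$ and $q^n$. The parametrisation $\phi$ from Theorem~\ref{t:uniform} need not be open and $M$ may be disconnected, so one cannot simply lift $q^n \to q$ to $M$ pointwise. Two standard remedies apply. One either arranges $\phi$ to be proper (a stronger and available form of analytic uniformisation), whence a convergent lift of a subsequence of $q^n$ exists in $M$ by compactness and is joined to a preimage of $q$ through local charts of $M$ before being pushed forward by $\phi$; or one appeals to the \L{}ojasiewicz--Hironaka structure theorem, which guarantees that the image of a real analytic manifold under a real analytic map is a subanalytic set, hence locally triangulable and therefore locally path connected. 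A minor bookkeeping point is the degenerate case $I_0 \neq \emptyset$, where the $q^n$ lie on a coordinate hyperplane; this is absorbed into the stratification by working inside $L$ rather than in all of $(\mathbb{R}^d)^V$.
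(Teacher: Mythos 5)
Your proposal follows essentially the same route as the paper's proof: stratify placements by the sign patterns of the edge-difference coordinates, note that (a power of) the rigidity map is real analytic on each stratum, apply \Cref{t:uniform} to the closure of the corresponding level set, and extract a non-trivial continuous flex from local (path-)connectedness, with the direction (i)$\Rightarrow$(ii) coming from \Cref{mainthm} in both cases. The final step you flag is treated just as briskly in the paper, which reads local connectedness of the closure and an analytic path inside a connected component of the uniformizing manifold directly off \Cref{t:uniform}, so your properness/subanalyticity remedy is exactly the kind of input the paper implicitly relies on.
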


\begin{proof}
    We begin with the following set up.
    Define the set $S := \{-1,0,1\}^d$ and function $s : \mathbb{R}^d \rightarrow S$ with $s(x) = (\sgn(x_i))_{i=1}^d$.
    Fix an ordering on the vertices of $V$ and define for every $h \in S^E$ the set 
    \begin{align*}
        C_h := \left\{ r \in (\mathbb{R}^d)^V : s(r_v-r_w) = h_{vw} \text{ for all $vw \in E$ with $v > w$} \right\}.
    \end{align*}
    We note that each set $C_h$ is a convex cone that is a open subset of its linear span,
    and the rigidity map $f_G$ is a real analytic map on the set $C_h$ when restricted to the linear span of $C_h$.
    Each set $f_G^{-1}f_G(q) \cap C_h$ is a real analytic subset of the linear space $\spann C_h$.
    
    Suppose that $(G,q)$ is locally flexible.
    Since the set $S^E$ is finite,
    it follows that there exists $h \in S^E$ such that for every $\varepsilon >0$ there exists $q' \in \overline{C_h}$ where $(G,q')$ and $(G,q)$ are equivalent but not congruent and $\|q'_v-q_v\|_p< \varepsilon$ for all $v \in V$.
    By \Cref{t:uniform},
    there exists a real analytic manifold $M_h$ and a real analytic map $\phi: M_h \rightarrow (\mathbb{R}^d)^V$ so that $\phi(M_h) = \overline{f_G^{-1}[f_G(q)] \cap C_h}$.
    Hence $\overline{f_G^{-1}f_G(q) \cap C_h}$ is locally connected,
    and we can choose an equivalent but not congruent placement $q'$ in the same connected component $C$ of $\overline{f_G^{-1}[f_G(q)] \cap C_h}$ as $q$.
    Since $C$ is connected, $\phi$ is continuous and $M_h$ is locally compact,
    there exists a connected component $C'$ of $M_h$ where $\phi(C') = C$.
    Choose a real analytic path $\alpha :[0,1] \rightarrow C'$ so that $\phi\circ \alpha(0) = q$ and $\phi \circ\alpha(1) = q'$.
    It is now immediate that $\gamma := \phi \circ \alpha$ is a non-trivial continuous flex of $(G,q)$.
\end{proof}

The conditions given for prestress stability and second-order rigidity can be significantly simplified for frameworks in $\ell_p$ normed spaces.
We define for every $x \in \mathbb{R}^d$ and $p \in (1, \infty)$ the vector $x^{(p-1)} := ( \sgn(x_i)|x_i|^{p-1})_{i=1}^d$, and for every suitable $p \in (1,\infty)$ we define $d \times d$ diagonal matrix $\Delta^{(p-2)}_x$ with diagonal $|x_1|^{p-2}, \ldots, |x_d|^{p-2}$.
If $p >2$ then $\Delta^{(p-2)}_x$ exists for all points $x$,
and if $p<2$ then $\Delta^{(p-2)}_x$ is only defined if $x$ has non-zero coordinates.
For the special case of $p=2$,
we can define $\Delta^{(0)}_x$ for all $x$ by setting it to always be the identity matrix.
By applying the methods used in \Cref{sec:2nd} with $f_G$ replaced with the map $q \mapsto (\|q_v-q_w\|_p^p)_{vw \in E}$,
we obtain the following equivalent characterisations of prestress stability and second-order rigidity in $\ell_p$ normed spaces.

\begin{corollary}\label{c:equiv defs}
    Let $(G,q)$ be a second-order well-positioned framework in an $\ell_p$ normed space for $p \in (1,\infty)$.
    Then the following properties hold.
    \begin{enumerate}[(i)]
        \item \label{c:equiv defs1} $(G,q)$ is prestress stable if and only if there exists $a \in \mathbb{R}^E$ so that for every $v \in V$ we have $\sum_{w \in N(v)} a_{vw} (q_v-q_w)^{(p-1)} = 0$, and
        for every edge $vw \in E$ we have
        \begin{align*}
            H^p_a(u) := \sum_{vw \in E} a_{vw} (u_v - u_w)^T \Delta^{(p-2)}_{q_v-q_w} (u_v - u_w) > 0
        \end{align*}
        for all $u \in \ker df_G(q) \setminus \mathcal{T}(q)$.
        \item \label{c:equiv defs2} $(G,q)$ is second-order rigid if and only if the following holds:
        if $(u,u') \in (\mathbb{R}^d)^V \times (\mathbb{R}^d)^V$ with $u \in \ker df_G(q)$ and
        \begin{align*}
            (p-1)(u_v-u_w)^T \Delta^{(p-2)}_{q_v-q_w} (u_v-u_w) + (u_v'-u_w')^T (q_v-q_w)^{(p-1)}  = 0
        \end{align*}
        for each $vw \in E$,
        then $u \in \mathcal{T}(q)$.
    \end{enumerate}
\end{corollary}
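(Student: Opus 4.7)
The strategy is to reduce both parts to a single algebraic identity for the Hessian of the $\ell_p$ norm, coupled with a rescaling bijection on stresses. Specifically, by differentiating the gradient $\varphi_x = \|x\|_p^{1-p}\,x^{(p-1)}$ componentwise and using the relation $x^{(p-1)}\cdot u = \|x\|_p^{p-1}\,\varphi_x(u)$ to consolidate the rank-one correction term, I expect to obtain
\begin{align*}
    \Delta_x(u,u) \;=\; (p-1)\,\|x\|_p^{1-p}\, u^T \Delta^{(p-2)}_x u \;-\; (p-1)\,\|x\|_p^{-1}\,\varphi_x(u)^2.
\end{align*}
Now define the bijection $a\leftrightarrow\tilde a$ on $\mathbb{R}^E$ by $\tilde a_{vw} := a_{vw}\|q_v-q_w\|_p^{1-p}$. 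Substituting $\varphi_{q_v-q_w} = \|q_v-q_w\|_p^{1-p}(q_v-q_w)^{(p-1)}$ into the stress condition $\sum_{w\in N(v)} a_{vw}\varphi_{q_v-q_w}=0$ shows that $a$ is a stress of $(G,q)$ in the sense of \Cref{def:prestress} if and only if $\sum_{w\in N(v)} \tilde a_{vw}(q_v-q_w)^{(p-1)}=0$ for every $v\in V$.

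For part (i), I would invoke \Cref{p:prestress}: $(G,q)$ is prestress stable if and only if there is a stress $a$ with $H_a(u)>0$ on $\ker df_G(q)\setminus \mathcal{T}(q)$. For any $u\in\ker df_G(q)$ the quantity $\varphi_{q_v-q_w}(u_v-u_w)$ vanishes on every edge, so the second term of the key identity drops out and a direct calculation yields $H_a(u) = (p-1)H^p_{\tilde a}(u)$. Since $p-1>0$ and $a\leftrightarrow \tilde a$ is a bijection, positivity of $H_a$ on $\ker df_G(q)\setminus\mathcal{T}(q)$ is equivalent to positivity of $H^p_{\tilde a}$, which is the characterisation claimed in (i).

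For part (ii), I would feed the identity into the per-edge second-order equation of \Cref{def:2nd}. Again $u\in\ker df_G(q)$ annihilates the $\varphi_x(u)^2$ term, and $\varphi_{q_v-q_w}(u'_v-u'_w) = \|q_v-q_w\|_p^{1-p}\,(q_v-q_w)^{(p-1)}\cdot(u'_v-u'_w)$, so the equation factors as $\|q_v-q_w\|_p^{1-p}$ times the condition in (ii). Dividing by this positive factor shows that the set of second-order flex pairs $(u,u')$ is unchanged, settling (ii). The main obstacle throughout is the direct Hessian computation, which requires a careful case split between $p\geq 2$ and $1<p<2$; in the latter range $\Delta^{(p-2)}_x$ is only defined when every coordinate of $x$ is nonzero, precisely the content of second-order well-positionedness. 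Once the identity is in place, the rest is algebraic bookkeeping for the bijection $a\leftrightarrow \tilde a$.
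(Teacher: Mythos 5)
Your argument is correct: the Hessian identity
\begin{align*}
    \Delta_x(u,u) \;=\; (p-1)\,\|x\|_p^{1-p}\, u^T \Delta^{(p-2)}_x u \;-\; (p-1)\,\|x\|_p^{-1}\,\varphi_x(u)^2
\end{align*}
checks out (differentiate $\varphi_x=\|x\|_p^{1-p}x^{(p-1)}$ and use $x^{(p-1)}\cdot u=\|x\|_p^{p-1}\varphi_x(u)$), the rescaling $\tilde a_{vw}=a_{vw}\|q_v-q_w\|_p^{1-p}$ is a bijection between norm-stresses in the sense of \Cref{def:prestress} and the $(p-1)$-power stresses of the corollary, and on $\ker df_G(q)$ the rank-one term vanishes edgewise, giving $H_a(u)=(p-1)H^p_{\tilde a}(u)$ and the edgewise factorisation of the second-order flex equation by the positive scalar $\|q_v-q_w\|_p^{1-p}$; combined with \Cref{p:prestress} and \Cref{def:2nd} this yields both (i) and (ii). The route differs from the paper's, which does not compute the Hessian of the norm at all: there the rigidity map is replaced by the smooth map $q\mapsto(\|q_v-q_w\|_p^p)_{vw\in E}$, whose derivative and Hessian are the polynomial expressions $p\,x^{(p-1)}$ and $p(p-1)\Delta^{(p-2)}_x$, and the arguments of \Cref{sec:2nd} are rerun verbatim for this map, so the corollary's conditions appear as the prestress/second-order conditions of the modified map (the equivalence with the norm-based notions being the implicit chain-rule relation $\|\cdot\|_p=(\|\cdot\|_p^p)^{1/p}$). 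Your version makes that chain-rule translation fully explicit and stays entirely within the norm-based framework already established, which is more self-contained for this particular corollary; the paper's substitution trick avoids the negative powers of $\|q_v-q_w\|_p$ altogether and simultaneously re-proves the supporting results (e.g.\ the analogue of \Cref{main2nd}) in the smoother power formulation. One small correction: no case split between $p\geq 2$ and $1<p<2$ is actually needed in the computation itself, since second-order well-positionedness already guarantees twice-differentiability (hence, for $p<2$, nonvanishing coordinates of $q_v-q_w$) at every edge difference, and the identity is uniform at such points.
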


\section{Examples in various \texorpdfstring{$\ell_p$}{Lp} normed planes}\label{sec:ex}

For this section we shall focus on $\ell_p$ normed planes for $p \neq 2$.
In this class of normed planes we always have $\mathcal{T}(q)=\{(z)_{v\in V} : z\in \mathbb{R}^2\}$,
which will often simplify things.

\subsection{Examples for the \texorpdfstring{$\ell_\infty$}{L-infinity} normed plane}
We will begin with examples for the $\ell_\infty$ normed plane.
It follows from \Cref{p:poly} that our interesting examples shall be those where $(G,p)$ is not well-positioned.
We begin by revisiting the problematic framework depicted in \Cref{fig2}(ii).

\begin{example}[Revisiting \Cref{fig2}(ii)]\label{ex:1bar}
    Let $(G,p)$ be the framework in the $\ell_\infty$ normed plane pictured in \Cref{fig2}(ii);
    in particular,
    fix $G = ( \{a,b\},\{ab\})$ and $p_a=(0,0), p_b=(1,1)$.
    The set of generalised rigidity operators of $(G,p)$ is exactly the set $\{ R_t : t \in [0,1]\}$,
    where 
    \begin{align*}
        R_t : (\mathbb{R}^2)^{\{a,b\}} \rightarrow \mathbb{R}^{\{ab\}}, ~ u = \left(\left(u_a^1,u_a^2\right),\left(u_b^1,u_b^2\right)\right) \mapsto t\left(u_b^1- u_a^1\right) + (1-t)\left(u_b^2- u_a^2\right).
    \end{align*}
    By \Cref{p:altdef},
    $(G,p)$ is infinitesimally rigid.
    However $(G,p)$ is strongly infinitesimally flexible since $\rank R_t =1 < 2$ for each $t \in [0,1]$,
    which agrees with \Cref{mainthm}.
\end{example}

Strong infinitesimal rigidity importantly assumes that all generalised rigidity operators contain only trivial infinitesimal flexes in their kernels.
While it is tempting to weaken this condition to the requirement that this holds for a single generalised rigidity operator (a sufficient but not necessary condition for infinitesimal rigidity),
the following example tells us that we cannot do this and retain the useful equivalence properties that stem from \Cref{mainthm}.

\begin{example}[A strongly infinitesimally flexible framework with an ``infinitesimally rigid'' generalised rigidity operator]\label{ex1}
    Let $H$ be the graph described in \Cref{fig1}(i).
    Define the following placement $p$ of $H$ in the $\ell_\infty$ normed plane (as shown in \Cref{fig1}(ii)):
    \begin{gather*}
        p_{v_1} = (0,0), \qquad p_{v_2} = (1,0), \qquad  p_{v_3} = (0.9,1), \\ p_{v_4} =  (-0.1,1), \qquad p_{v_5} = p_{v_6} =  (0.5,1.2), \qquad p_{v_7} =  (1.3,2).
    \end{gather*}
    It is immediate that $(H,p)$ is locally and continuously flexible,
    as $p_{v_7}$ can swing freely around in a square path centered at $p_{v_5}=p_{v_6}$.
    Any generalised derivative of $(H,p)$ can be represented by the Jacobian matrix
    \begin{align*}
    \setcounter{MaxMatrixCols}{20}
        M(s,t) :=
        \scalemath{0.65}{
        \begin{bmatrix}
            0 & 0 & 0 & 0 & 0& 0 & 0& 0  & -s& -(1-s)& 0 & 0& s & (1-s)\\
            0 & 0 & 0 & 0 & 0& 0 & 0& 0 & 0 & 0 & -t& -(1-t)& t & (1-t)\\
            -1 & 0 & 1 & 0 & 0 & 0& 0 & 0& 0 & 0& 0 & 0& 0 & 0\\
            0 & -1 & 0 & 0 & 0 & 1 & 0 & 0& 0 & 0& 0 & 0& 0 & 0\\
            0 & -1 & 0 & 0 & 0 & 0 & 0 & 1& 0 & 0& 0 & 0& 0 & 0\\
            0 & 0 & 0 & 1 & 0 & -1& 0 & 0& 0 & 0& 0 & 0& 0 & 0\\
            0 & 0 & -1 & 0 & 0 & 0& 1 & 0& 0 & 0& 0 & 0& 0 & 0\\
            0 & 0 & 0 & 0 & 1& 0 & -1& 0  & 0& 0& 0 & 0& 0 & 0\\
            0 & -1 & 0 & 0 & 0 & 0 & 0 & 0& 0 & 1& 0 & 0& 0 & 0\\
            0 & 0 & 0 & -1 & 0 & 0 & 0& 0 & 0& 0 & 0& 1 & 0& 0\\
            0 & 0 & 0 & 0 & 1& 0 & 0& 0 & 0& 0 & -1& 0 & 0& 0 \\
            0 & 0 & 0 & 0 & 0& 0 & -1& 0  & 1& 0& 0 & 0& 0 & 0\\
        \end{bmatrix}
        }
    \end{align*}
    for $s,t \in [0,1]$.
    We observe that $\rank M(s,t) = 11$ if $s=t$,
    while $\rank M(s,t) = 12 = |E|$ if $s \neq t$.
    Hence although $(H,p)$ is strongly infinitesimally flexible,
    it has generalised rigidity operators with $\ker \delta f_G(p) = \mathcal{T}(p)$.
\end{example}

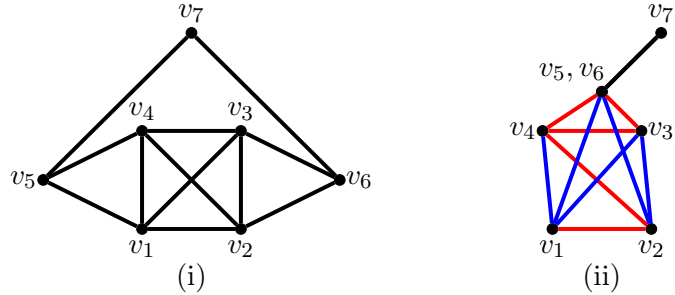
\begin{figure}[htp]
	\begin{center}
        \begin{tikzpicture}[scale=1.3]
			\node[vertex] (1) at (0,0) {};
			\node[vertex] (2) at (1,0) {};
			\node[vertex] (3) at (1,1) {};
			\node[vertex] (4) at (0,1) {};
			
			\node[vertex] (5) at (-1,0.5) {};
			\node[vertex] (6) at (2,0.5) {};
			\node[vertex] (7) at (0.5,2) {};
			
			\node at (0,-0.2) {$v_1$};
			\node at (1,-0.2) {$v_2$};
			\node at (1,1.2) {$v_3$};
			\node at (0,1.2) {$v_4$};
			
			\node at (-1.2,0.5) {$v_5$};
			\node at (2.2,0.5) {$v_6$};
			\node at (0.5,2.2) {$v_7$};
			
			\draw[edge] (1)edge(2);
			\draw[edge] (1)edge(3);
			\draw[edge] (2)edge(3);
			\draw[edge] (2)edge(4);
			\draw[edge] (3)edge(4);
			\draw[edge] (4)edge(1);
			
			\draw[edge] (1)edge(5);
			\draw[edge] (4)edge(5);
			
			\draw[edge] (2)edge(6);
			\draw[edge] (3)edge(6);
			
			\draw[edge] (5)edge(7);
			\draw[edge] (6)edge(7);
			
			\node at (0.5,-0.5) {(i)};
		\end{tikzpicture}\qquad\qquad
        \begin{tikzpicture}[scale=1.3]
			\node[vertex] (1) at (0,0) {};
			\node[vertex] (2) at (1,0) {};
			\node[vertex] (3) at (0.9,1) {};
			\node[vertex] (4) at (-0.1,1) {};
			
			\node[vertex] (5) at (0.5,1.4) {};
			\node[vertex] (6) at (0.5,1.4) {};
			\node[vertex] (7) at (1.1,2) {};
			
			\node at (0,-0.2) {$v_1$};
			\node at (1,-0.2) {$v_2$};
			\node at (1.1,1) {$v_3$};
			\node at (-0.3,1) {$v_4$};
			
			\node at (0.2,1.6) {$v_5,v_6$};
			\node at (1.1,2.2) {$v_7$};
			
			\draw[edge,red] (1)edge(2);
			\draw[edge,red] (2)edge(4);
			\draw[edge,red] (3)edge(4);
			\draw[edge,blue] (4)edge(1);
			\draw[edge,blue] (1)edge(3);
			\draw[edge,blue] (2)edge(3);
			
			\draw[edge,blue] (1)edge(5);
			\draw[edge,red] (4)edge(5);
			
			\draw[edge,blue] (2)edge(6);
			\draw[edge,red] (3)edge(6);
			
			\draw[edge] (5)edge(7);
			\draw[edge] (6)edge(7);
			
			\node at (0.5,-0.5) {(ii)};
		\end{tikzpicture}
	\end{center}
	\caption{(i) A graph $H$ that has rigid placements in the $\ell_\infty$ normed plane. (ii) A badly-positioned placement $p$ of $H$ in the $\ell_\infty$ normed plane as described in described in \Cref{ex1} that is locally and continuously flexible.
	An edge $v_iv_j$ is red edge (respectively, blue) if the Jacobian of the norm at $p_{v_i}-p_{v_j}$ is either $[1 ~ 0]$ or $[- 1 ~ 0]$ (respectively, either $[0 ~ 1]$ or $[0~ -1]$).}\label{fig1}
\end{figure}

As shown by \Cref{ex:1bar,ex1}, the equivalence stated in \Cref{p:poly}(\ref{p:poly3}) does not hold for badly-positioned frameworks.
One may hope that strong infinitesimal rigidity is still equivalent to local and continuous rigidity for the $\ell_\infty$ normed plane.
The next example indicates that this is unfortunately not true either.

\begin{example}[A strongly infinitesimally flexible framework that is locally and continuously rigid]\label{ex2}
    Let $G$ be the complete graph on the vertex set $\{v_1,v_2,v_3,v_4\}$ and let $p$ be the placement of $G$ with $p_{v_1}=(-1,-1)$, $p_{v_2}=(1,-1)$, $p_{v_3}=(-1,1)$, $p_{v_4}=(1,1)$.
    It was shown by Petty (see \cite[Thm.~4]{petty}) that every framework equivalent to $(G,p)$ is also congruent,
    hence $(G,p)$ is both locally and continuously rigid.
    
    For each $n \in \mathbb{N}$,
    define $(G,p^n)$ to be the well-positioned framework with $p^n_{v_1}=(-1,-1)$, $p^n_{v_2}=(1,-1)$, $p^n_{v_3}=(-1,1+\frac{1}{n})$, $p^n_{v_4}=(1,1+\frac{1}{n})$.
    Each rigidity operator $df_G(p^n)$ will be equal to the rank 5 linear map $T$ with the matrix representation
    \begin{align*}
    \setcounter{MaxMatrixCols}{20}
        \scalemath{0.75}{
        \begin{bmatrix}
            -1 & 0 & 1 & 0 & 0& 0 & 0& 0\\
            0 & -1 & 0 & 0 & 0& 1 & 0& 0 \\
            0 & -1 & 0 & 0 & 0 & 0& 0& 1 \\
            0 & 0 & 0 & -1 & 0 & 1 & 0 & 0\\
            0 & 0 & 0 & -1 & 0 & 0 & 0 & 1 \\
            0 & 0 & 0 & 0 & -1 & 0 & 1 & 0 
        \end{bmatrix}.
        }
    \end{align*}
    Hence, $(G,p)$ is strongly infinitesimally flexible as $T \in \partial f_G(p)$.
\end{example}

\subsection{An example in smooth \texorpdfstring{$\ell_p$}{Lp} normed planes with colinear points}\label{sec:ex2}

For this section we shall define a continuous family of locally rigid frameworks which exhibits a variety rigidity properties.
To do so,
we will fix the following framework.

Fix $p>2$.
Let $(G,q)$ be any infinitesimally rigid framework in $\ell_p^2$ with no non-zero stresses and $q_v \neq q_w$ for all distinct vertices $v,w\in V$;
the existence of such frameworks can be guaranteed by \cite{kit-pow-1}.
Choose two vertices $v_1,v_2 \in V$.
Define $G'=(V',E')$ to be the graph formed by adding a vertex $v_0$ and the edges $v_0 v_1, v_0 v_2$.
We now define $q'$ to be the placement of $G'$ with $q'_v=q_v$ for all $v \in V$ and $q'_{v_0} =  (1-t) q_{v_1} + t q_{v_2}$ for some $t \in (0,1)$.
It is immediate that $(G',q')$ is locally rigid.
See \Cref{fig3} for an example of the framework $(G',q')$.

\begin{figure}[htp]
	\begin{center}
        \begin{tikzpicture}[scale=1.3]
			\node[vertex] (1) at (0,0) {};
			\node[vertex] (2) at (1,0.1) {};
			\node[vertex] (3) at (1.1,1.2) {};
			\node[vertex] (4) at (-0.1,1.1) {};
			
			\node[vertex] (5) at (-1,1.8) {};
			\node[vertex] (6) at (2,1.8) {};
			\node[vertex] (7) at (0.3,1.8) {};

			\node at (-1.2,1.8) {$v_1$};
			\node at (2.2,1.8) {$v_2$};
			\node at (0.3,2) {$v_0$};
			
			\draw[edge] (1)edge(2);
			\draw[edge] (1)edge(3);
			\draw[edge] (2)edge(3);
			\draw[edge] (2)edge(4);
			\draw[edge] (3)edge(4);
			\draw[edge] (4)edge(1);
			
			\draw[edge] (1)edge(5);
			\draw[edge] (4)edge(5);
			
			\draw[edge] (2)edge(6);
			\draw[edge] (3)edge(6);
			
			\draw[edge] (5)edge(7);
			\draw[edge] (6)edge(7);
		\end{tikzpicture}
	\end{center}
	\caption{An example of the framework $(G',q')$ described in \Cref{sec:ex2}.
	If the vertex $v_0$ is removed, then we obtain the infinitesimally rigid framework $(G,q)$.
	The point $q'_{v_0}$ is chosen to lie on the line between $q_{v_1}$ and $q_{v_2}$.}\label{fig3}
\end{figure}
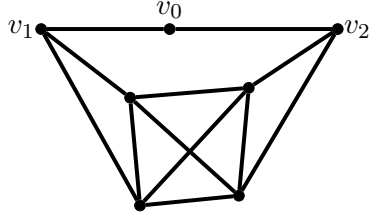

Let $a \in \mathbb{R}^{E'}$ be the unique (up to scalar) non-zero vector where $\sum_{w \in N(v)} a_{vw} (q'_v-q'_w)^{(p-1)} =0$ for all $v \in V'$.
As
\begin{align*}
   a_{v_0 v_1} (q'_{v_0}-q'_{v_1})^{(p-1)} + a_{v_0 v_2} (q'_{v_0}-q'_{v_2})^{(p-1)} = ( -t^{p-1} a_{v_0 v_1} + (1-t)^{p-1} a_{v_0 v_2} ) (q_{v_1}-q_{v_2})^{(p-1)},
\end{align*}
we have that $a_{v_0 v_1} = a_{v_0 v_2} (1-t)^{p-1}/t^{p-1}$.
If $a_{v_0 v_1} = a_{v_0 v_2} = 0$ then $a_e = 0$ for all $e \in E$ (as $(G,q)$ has no non-trivial stresses),
hence both $a_{v_0 v_1}$ and $a_{v_0 v_2}$ are non-zero.
Since the trivial infinitesimal flexes correspond with the translations,
we note that for any $u \in \ker df_{G'}(q')$, there exists $z \in \mathbb{R}^2$ so that $u_v = z$ for all $v \in V$,
and $u_{v_0} = y +z$ for some point $y$ orthogonal to $(q_{v_1}-q_{v_2})^{(p-1)}$ (with respect to the dot product).
We will now set $y =(y_1,y_2)$ and $q_{v_1} -q_{v_2} = ( q_1, q_2 )$ and observe the two different possibilities.

\begin{example}[$q_1,q_2 \neq 0$]\label{ex:prestress not inf}
    Given such an infinitesimal flex $u$, we note that
    \begin{eqnarray*}
        H^p_a(u) &=& a_{v_0 v_1} \left( y^T \Delta^{(p-2)}_{q'_{v_0} - q'_{v_1}} y \right) + a_{v_0 v_2} \left( y^T \Delta^{(p-2)}_{q'_{v_0} - q'_{v_2}} y \right) \\ 
        &=& \left(t^{p-2} a_{v_0 v_1} + (1-t)^{p-2}a_{v_0 v_2} \right)  y^T \Delta^{(p-2)}_{(q_1,q_2)} y. 
    \end{eqnarray*}
    Since
    \begin{align}\label{eq:prestress}
       y^T \Delta^{(p-2)}_{(q_1,q_2)} y = |q_1|^{q-2} y_1^2 + |q_2|^{q-2} y_2^2 >0,
    \end{align}
    it follows that $H_a(u) > 0$ for all $u \in \ker df_G(q)$ if and only if
     \begin{align*}
       t^{p-2} a_{v_0 v_1} + (1-t)^{p-2}a_{v_0 v_2} = (t^{-1}(1-t)^{p-1} + (1-t)^{p-2}) a_{v_0 v_2} >0.
    \end{align*}
    We can always choose whether $a_{v_0 v_2}$ is positive or negative by scaling $a$,
    hence $(G',q')$ is prestress stable by \Cref{c:equiv defs}(\ref{c:equiv defs1}).
\end{example}


\begin{example}[$q_2 = 0$]\label{ex:local not 2nd}
    Since $q_2=0$,
    it follows from our choice of $y$ that $y_1=0$.
    With this, \cref{eq:prestress} becomes an equality.
    Set $u' \in (\mathbb{R}^2)^{V'}$ with $u'_v = 0$ for all $v \in V$ and $u'_{v_0} = u_{v_0}$.
    It follows that $(u,u')$ is a second-order flex of $(G',q')$, hence $(G',q')$ is second-order flexible by \Cref{c:equiv defs}(\ref{c:equiv defs2}).
\end{example}

\subsection{Braced grids in the $\ell_p$ plane}

Fix $p >2$.
Let $K_4$ be the complete graph with vertex set $V = \{v_1,v_2,v_3,v_4\}$,
and let $q$ be the placement of $K_4$ in the $\ell_p$ normed plane where
$q_{v_1}=(0,0)$, $q_{v_2}=(1,0)$, $q_{v_3}=(0,1)$, $q_{v_4}=(1,1)$.
The framework $(G,q)$ is infinitesimally flexible as it has a non-trivial infinitesimal flex
$\bar{u}$ with $\bar{u}_{v_1}=(1,-1)$, $\bar{u}_{v_2}=(1,1)$, $\bar{u}_{v_3}=(-1,-1)$, $\bar{u}_{v_4}=(-1,1)$.

Let $a \in \mathbb{R}^E$ be the element where $a_{v_1 v_2} = a_{v_1 v_3} = a_{v_2 v_4}= a_{v_3 v_4} = -1$
and $a_{v_1 v_4}=a_{v_2 v_3} = 1$.
We compute that $\sum_{j \neq i} a_{v_i v_j}(q_{v_i}-q_{v_j})^{p-1} = 0$ for each $i\in \{1,2,3,4\}$.
We can also easily compute the Hessians of the various edge constraints to be
\begin{align*}
    \Delta^{(p-2)}_{q_{v_1} - q_{v_2}} = \Delta^{(p-2)}_{q_{v_2} - q_{v_1}} = \Delta^{(p-2)}_{q_{v_3} - q_{v_4}} = \Delta^{(p-2)}_{q_{v_4} - q_{v_3}} = 
    \begin{bmatrix}
        1 & 0 \\
        0 & 0
    \end{bmatrix},\\
    \Delta^{(p-2)}_{q_{v_1} - q_{v_3}} = \Delta^{(p-2)}_{q_{v_3} - q_{v_1}} = \Delta^{(p-2)}_{q_{v_2} - q_{v_4}} = \Delta^{(p-2)}_{q_{v_4} - q_{v_2}} = 
    \begin{bmatrix}
        0 & 0 \\
        0 & 1
    \end{bmatrix},\\
    \Delta^{(p-2)}_{q_{v_1} - q_{v_4}} = \Delta^{(p-2)}_{q_{v_4} - q_{v_1}} = \Delta^{(p-2)}_{q_{v_2} - q_{v_3}} = \Delta^{(p-2)}_{q_{v_3} - q_{v_2}} = 
    \begin{bmatrix}
        1 & 0 \\
        0 & 1
    \end{bmatrix}.
\end{align*}
Choose any infinitesimal flex $u \in \ker df_{K_4}(q)$.
It can quickly checked that $u = ( c \bar{u}_{v_i} + x)_{i=1}^4$,
where $x \in \mathbb{R}^2$ and $c \in \mathbb{R}$.
With this we compute that $H_a^p(u) = c^2 H_a^p(\bar{u}) = 4 c^2$,
hence $H_a^p$ takes only positive values on the set of non-trivial infinitesimal flexes.
By \Cref{c:equiv defs}(\ref{c:equiv defs1}) and \Cref{main2nd},
$(G',q')$ is prestress stable, locally rigid and continuously rigid.

We can now extend the properties of this framework to what we shall define to be a \emph{doubly-braced grid}.
Given $[k]$ is the first $k$ positive integers,
let $H$ be the graph with vertex set $[m] \times [n]$ and an edge between two vertices $(i,j),(i',j')$ if and only if $i=i'$ and $|j-j'|=1$, or $|i-i'|=1$ and $j=j'$.
Now define a set $B$ of double-braces of the grid $H$,
i.e.~pairs of edges $b_{i,j} = \{(i,j)(i+1,j+1), \, (i+1,j)(i,j+1)\}$.
Let $G=(V,E)$ be the graph formed from $H$ by adding the double-braces to $H$ as pairs of edges,
and fix $q:V \rightarrow \mathbb{R}^2$ to be the placement of $G$ with $p_{(i,j)} = (i,j)$.
See \Cref{fig:grid} for two examples of such grids.
With this, the following holds.

\begin{proposition}\label{p:grid}
    A doubly-braced grid $(G,q)$ is prestress stable/locally rigid/continuously rigid in the $\ell_p$ normed plane for $p>2$ if and only if for every $i \in [m-1]$ and $j \in [n-1]$,
    there exists double-braces $b_{i,k}, b_{\ell,j} \in B$.
\end{proposition}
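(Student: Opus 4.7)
My plan is to prove the biconditional in two directions and use \Cref{main2nd}(ii) together with \Cref{p:lp} to simultaneously deduce the equivalence of prestress stability, local rigidity, and continuous rigidity with the combinatorial condition. The argument rests on first describing all infinitesimal flexes of $(G,q)$ very explicitly. Since $p>2$, the vector $(q_v-q_w)^{(p-1)}$ equals $(\pm 1,0)$ on horizontal grid edges, $(0,\pm 1)$ on vertical grid edges, and $(\pm 1,\pm 1)$ on the brace diagonals. Each horizontal edge then forces the first coordinate $u^1_{(i,j)}$ to be independent of $i$ and each vertical edge forces $u^2_{(i,j)}$ to be independent of $j$, so $u_{(i,j)} = (f_1(j), f_2(i))$ for some $f_1:[n]\to\mathbb{R}$ and $f_2:[m]\to\mathbb{R}$. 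Both diagonals of a brace $b_{i,j}$ produce the same single scalar relation $\tilde a_j + \tilde b_i = 0$, where $\tilde a_j := f_1(j+1)-f_1(j)$ and $\tilde b_i := f_2(i+1)-f_2(i)$. The trivial space $\mathcal{T}(q)$ corresponds exactly to $f_1, f_2$ being constant.

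For the necessary direction, suppose some $i_0 \in [m-1]$ has no brace $b_{i_0,k}$ in $B$ (the case of an uncovered $j_0$ is symmetric). I would construct an explicit continuous flex by rigidly translating the upper strip $\{(i,j):i>i_0\}$ along an analytic curve through the origin on the set $\{(\alpha,\beta) : |1+\alpha|^p+|\beta|^p=1\}$; such a curve exists by the implicit function theorem at the smooth point $(1,0)$ of the $\ell_p^2$-unit sphere. The only edges of $G$ crossing the gap $i_0$ are the horizontal grid edges $(i_0,j)(i_0+1,j)$, and these retain length $1$ by the defining equation of the curve, while every other edge lies in a single strip and moves rigidly. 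The motion is non-trivial because the isometries of $\ell_p^2$ factor as translations composed with the finite dihedral symmetry group of the unit ball, and no small translation matches the strip-shifted placement. Thus $(G,q)$ is continuously flexible; \Cref{p:lp} then rules out local rigidity and \Cref{main2nd}(ii) rules out prestress stability.

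For the sufficient direction, assume every row and column gap carries a brace. I build a stress $a^*\in\mathbb{R}^E$ by summing, over each brace $b_{i,j}\in B$, the local ``$K_4$-stress'' from the computation preceding \Cref{p:grid}: value $+1$ on the two diagonal edges of cell $b_{i,j}$ and value $-1$ on each of its four grid edges. Vertex balance holds because each cell-summand is independently balanced at each of its corners. To evaluate $H^p_{a^*}$ on a flex $u$ from the setup, note that on each axis edge either $u^1_v - u^1_w = 0$ or $u^2_v - u^2_w = 0$ while the diagonal matrix $\Delta^{(p-2)}_{q_v-q_w}$ kills precisely the other coordinate, so those terms vanish. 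Only diagonals contribute, yielding
\[
H^p_{a^*}(u) \;=\; 2\sum_{b_{i,j}\in B}\bigl(\tilde a_j^2+\tilde b_i^2\bigr).
\]
If $u\notin \mathcal{T}(q)$ then some $\tilde a_{j_0}$ or $\tilde b_{i_0}$ is non-zero, and the hypothesis supplies a brace at that column or row index whose summand is strictly positive. \Cref{c:equiv defs}(\ref{c:equiv defs1}) now gives prestress stability, and \Cref{main2nd}(ii) together with \Cref{p:lp} upgrade this to both local and continuous rigidity.

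The main obstacle is the continuous-flex construction in the necessary direction: I need to verify both that the implicitly-defined curve on the $\ell_p$-sphere genuinely integrates into a simultaneous length-preserving motion of every edge of $G$, and that the resulting placement is not congruent to $q$ under the eight-element dihedral symmetry group of the $\ell_p^2$ unit ball. The remaining pieces—the coordinate-wise decomposition of $\ker df_G(q)$, the assembly of the cellwise stress, and the evaluation of $H^p_{a^*}$—are essentially bookkeeping once the infinitesimal flex structure is in place.
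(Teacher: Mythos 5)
Your proposal is correct and takes essentially the same route as the paper's proof: the same coordinate description of the infinitesimal flexes of the grid, the same cell-wise $\pm 1$ stress whose grid-edge terms vanish so that $H^p_a$ reduces to a positive sum over the braced diagonals, and the same strip-shear mechanism (which the paper only indicates via \Cref{fig:grid}(i)) when some row or column gap carries no brace. Your two elaborations---summing the local $K_4$ stresses so that shared grid edges receive coefficient $-2$ (which is what actually makes the equilibrium condition hold there), and the explicit implicit-function-theorem flex with non-triviality deduced from the discrete linear isometry group of $\ell_p^2$---merely make precise steps the paper leaves as sketches.
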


\begin{proof}
    If a column/row does not contain any braces, then it is easy to see that the grid must be locally and continuously flexible (see \Cref{fig:grid}(i)), and hence not prestress stable by \Cref{main2nd}.
    Suppose every row and column contains a double-brace.
    Fix the vector $a \in \mathbb{R}^E$ which takes the value $-1$ at every edge of a 4-cycle that has a double-brace, 1 at every double-brace edge and 0 for all other edges.
    Choose a non-trivial infinitesimal flex $u$ of $(G,q)$.
    We note an important property of $u$:
    there exists $x_1,\ldots,x_m \in \mathbb{R}$ and $y_1,\ldots,y_n \in \mathbb{R}$ so that $u_{(i,j)} = (x_i,y_i)$ for every $(i,j) \in [m] \times [n]$.
    As $u$ is non-trivial, either there exists $x_i \neq x_{i+1}$ or $y_j \neq y_{j+1}$ for some $i \in [m-1]$ or $j \in [n-1]$.
    Without loss of generality, we will suppose that $x_i \neq x_{i+1}$,
    since we may freely rotate $(G,q)$ by $90^\circ$ increments.
    Because every row contains a double-brace,
    there exists a $K_4$ subframework of $(G,q)$ on the vertices $(i,k),(i,k+1),(i+1,k),(i+1,k+1)$ for some $k \in [n-1]$ so that the restriction of $u$ is a non-trivial infinitesimal flex.
    As $u$ is non-trivial when restricted to a $K_4$ subframework of $(G,q)$,
    we have $H_a^p(u) = \sum_{vw \in \bigcup B}  \|u_v - u_w\|_2^2 > 0$.
    The result now follows from \Cref{c:equiv defs}(\ref{c:equiv defs1}) and \Cref{main2nd}.
\end{proof}

\begin{figure}[htp]
	\begin{center}
        \begin{tikzpicture}[scale=1]
			\node[vertex] (11) at (1,1) {};
			\node[vertex] (21) at (2,1) {};
			\node[vertex] (31) at (3,1) {};
			\node[vertex] (41) at (4,1) {};
			
			\node[vertex] (12) at (1,2) {};
			\node[vertex] (22) at (2,2) {};
			\node[vertex] (32) at (3,2) {};
			\node[vertex] (42) at (4,2) {};
			
			\node[vertex] (13) at (1,3) {};
			\node[vertex] (23) at (2,3) {};
			\node[vertex] (33) at (3,3) {};
			\node[vertex] (43) at (4,3) {};
			
			\node[vertex] (14) at (1,4) {};
			\node[vertex] (24) at (2,4) {};
			\node[vertex] (34) at (3,4) {};
			\node[vertex] (44) at (4,4) {};
			
			\node at (2.5,0.5) {(i)};
			
			\draw[edge] (11)edge(21);
			\draw[edge] (21)edge(31);
			\draw[edge] (31)edge(41);
			
			\draw[edge] (12)edge(22);
			\draw[edge] (22)edge(32);
			\draw[edge] (32)edge(42);
			
			\draw[edge] (13)edge(23);
			\draw[edge] (23)edge(33);
			\draw[edge] (33)edge(43);
			
			\draw[edge] (14)edge(24);
			\draw[edge] (24)edge(34);
			\draw[edge] (34)edge(44);
			
			\draw[edge] (11)edge(12);
			\draw[red,edge] (12)edge(13);
			\draw[edge] (13)edge(14);
			
			\draw[edge] (21)edge(22);
			\draw[red,edge] (22)edge(23);
			\draw[edge] (23)edge(24);
			
			\draw[edge] (31)edge(32);
			\draw[red,edge] (32)edge(33);
			\draw[edge] (33)edge(34);
			
			\draw[edge] (41)edge(42);
			\draw[red,edge] (42)edge(43);
			\draw[edge] (43)edge(44);
			
			\draw[edge] (11)edge(22);
			\draw[edge] (12)edge(21);
			
			\draw[edge] (31)edge(42);
			\draw[edge] (41)edge(32);

			\draw[edge] (13)edge(24);
			\draw[edge] (14)edge(23);
			
			\draw[edge] (33)edge(44);
			\draw[edge] (34)edge(43);
			
		\end{tikzpicture}\qquad\qquad
        \begin{tikzpicture}[scale=1]
			\node[vertex] (11) at (1,1) {};
			\node[vertex] (21) at (2,1) {};
			\node[vertex] (31) at (3,1) {};
			\node[vertex] (41) at (4,1) {};
			
			\node[vertex] (12) at (1,2) {};
			\node[vertex] (22) at (2,2) {};
			\node[vertex] (32) at (3,2) {};
			\node[vertex] (42) at (4,2) {};
			
			\node[vertex] (13) at (1,3) {};
			\node[vertex] (23) at (2,3) {};
			\node[vertex] (33) at (3,3) {};
			\node[vertex] (43) at (4,3) {};
			
			\node[vertex] (14) at (1,4) {};
			\node[vertex] (24) at (2,4) {};
			\node[vertex] (34) at (3,4) {};
			\node[vertex] (44) at (4,4) {};
			
			\node at (2.5,0.5) {(ii)};
			
			\draw[edge] (11)edge(21);
			\draw[edge] (21)edge(31);
			\draw[edge] (31)edge(41);
			
			\draw[edge] (12)edge(22);
			\draw[edge] (22)edge(32);
			\draw[edge] (32)edge(42);
			
			\draw[edge] (13)edge(23);
			\draw[edge] (23)edge(33);
			\draw[edge] (33)edge(43);
			
			\draw[edge] (14)edge(24);
			\draw[edge] (24)edge(34);
			\draw[edge] (34)edge(44);
			
			\draw[edge] (11)edge(12);
			\draw[edge] (12)edge(13);
			\draw[edge] (13)edge(14);
			
			\draw[edge] (21)edge(22);
			\draw[edge] (22)edge(23);
			\draw[edge] (23)edge(24);
			
			\draw[edge] (31)edge(32);
			\draw[edge] (32)edge(33);
			\draw[edge] (33)edge(34);
			
			\draw[edge] (41)edge(42);
			\draw[edge] (42)edge(43);
			\draw[edge] (43)edge(44);

			\draw[edge] (13)edge(24);
			\draw[edge] (14)edge(23);
			
			\draw[edge] (22)edge(33);
			\draw[edge] (23)edge(32);
			
			\draw[edge] (31)edge(42);
			\draw[edge] (41)edge(32);
			
		\end{tikzpicture}
	\end{center}
	\caption{(i) A continuously flexible doubly-braced grid in the $\ell_p$ normed plane. A non-trivial continuous flex can be formed by rotating the red edges in unison whilst maintaing the orientation of all other edges.
	(ii) A prestress stable doubly-braced grid in the $\ell_p$ normed plane for $p>2$.
	The grid is not prestress stable for $1 \leq p < 2$ since it is not second-order well-positioned, and is in fact not even continuously rigid for $p=2$.}\label{fig:grid}
\end{figure}
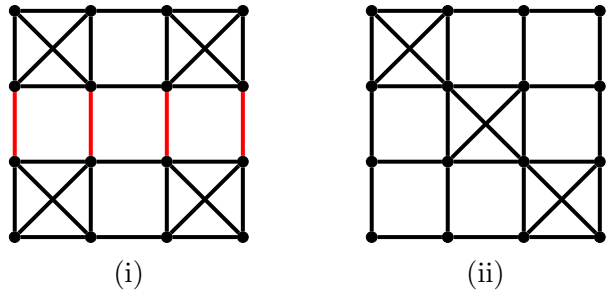

\begin{remark}
    Infinitesimal rigidity properties of doubly-braced grids in $\ell_p$ normed planes were very recently investigated by Power \cite{power20}.
    While his methods allow for a more general class of grids in a larger class of normed planes,
    they do require that each doubly-braced grid square be infinitesimally rigid.
    As was previously shown, this will not be true for the doubly-braced grids described in \Cref{p:grid}, and so his methods cannot be used here.
\end{remark}

\section{Closing remarks}\label{sec:end}

In \Cref{sec:summary},
we described the set of implications we have proven throughout the paper.
If we look at the first set of implications for frameworks that may or may not be well-positioned, we can see that no other possible implications will hold for all normed spaces:
\Cref{fig2}(i) is (strongly) infinitesimally flexible but locally and continuously rigid;
\Cref{fig2}(ii) is infinitesimally rigid but locally and continuously flexible;
and \cite{D21} details a locally flexible but continuously rigid framework.
Most of the remaining possible implications for second-order well-positioned frameworks are also false:
\Cref{ex:prestress not inf} is (strongly) infinitesimally flexible but prestress stable;
\cite[Fig.~9(b)]{conn96} is not prestress stable but is second-order rigid;
and \Cref{ex:local not 2nd} is not prestress stable or second-order rigid but is locally and continuously rigid.
It is, however, unknown if second-order rigidity implies local or continuous rigidity.
The method Connelly and Whiteley employ to show that second-order rigidity implies local and continuous rigidity in Euclidean normed spaces heavily relies on both the norm being infinitely differentiable and local flexibility implying the existence of an analytic flex.
Since there exist norms with no thrice-differentiable points\footnote{Take a positive continuous real-valued function $f:[a,b] \rightarrow \mathbb{R}$ (for example, a section of the Weierstrass function) and define the twice-differentiable (but not thrice-differentiable) concave function $F:[a,b] \rightarrow \mathbb{R}$ by setting $F(x) := -\int_a^x \int_a^t f(s) \mathop{ds} \mathop{dt}$. By applying the method outlined in \cite{govc} to $F$ (possibly shifted with some alteration of its domain), we can obtain a norm with no third-order derivative anywhere.}, we conjecture the following.

\begin{conjecture}\label{conj1}
    There exists a normed space with an second-order rigid framework that is locally/continuously flexible.
\end{conjecture}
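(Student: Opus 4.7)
The plan is to combine the Weierstrass-type norm construction outlined in the footnote with a concrete framework whose rigidity in a smooth ambient norm is driven precisely by third-order information. In Euclidean space, the implication from second-order rigidity to local rigidity rests on two features: (a) the squared-distance constraints are real-analytic, so a continuous flex, if it exists, may be taken real-analytic via curve selection on semi-algebraic sets, and (b) the flex hierarchy produced by iteratively differentiating the constraints terminates in the sense that the second-order level already blocks all candidates. Both features fail once the norm is not thrice-differentiable. So the first step is to identify, in a Euclidean (or $\ell_p$) ambient, a framework $(G,p)$ that is second-order rigid but whose rigidity is witnessed \emph{only} at third order, for example by a small modification of the colinear-point examples of \Cref{sec:ex2} or by adapting one of the third-order examples of Connelly--Whiteley \cite{conn96}.

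Second, following the construction hinted at in the footnote, I would build a norm $\|\cdot\|$ on $\mathbb{R}^d$ that agrees with the chosen smooth model to second order at each edge vector $p_v - p_w$, is globally convex and twice-differentiable, but is nowhere thrice-differentiable. Concretely, one starts from a non-negative continuous $f$ (for instance a restriction of the Weierstrass function), forms the twice-integrated concave function $F(x) = -\int_a^x \int_a^t f(s)\,ds\,dt$, and feeds $F$ into the \cite{govc} procedure to produce a unit ball whose boundary realises $F$ along one chosen direction. The design constraint is that along the direction of the suspected third-order flex the norm's higher-order corrections must flatten out or oscillate in such a way that an equivalent, non-congruent placement exists arbitrarily close to $p$.

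Finally, one verifies the three required properties. Second-order well-positionedness and second-order rigidity carry over from the smooth model because, by \Cref{def:2nd}, only $\varphi_x$ and $\Delta_x$ enter the second-order flex condition, and both are preserved by construction at the relevant edge vectors. Local flexibility is then witnessed by an explicit one-parameter family of placements whose edge vectors track the directions where the higher-order obstruction has been removed. I expect the main obstacle to be precisely this coordination step: producing a single norm that is globally convex and nowhere thrice-differentiable, and yet locally, at each of the finitely many edge directions simultaneously, deviates from the smooth reference in exactly the right way to allow a genuine non-trivial continuous equivalent path. This likely forces one to choose the framework and the norm jointly, and demands a delicate check that the constructed path preserves \emph{all} edge lengths rather than just those in the highlighted directions. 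A natural weakening, which may be more tractable as a first step, is to search for the example in a norm that is thrice-differentiable only on a meagre set, rather than nowhere thrice-differentiable.
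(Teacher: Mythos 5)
The statement you are addressing is left as a conjecture in the paper: no proof is given there, only the heuristic evidence in the closing remarks (the footnoted Weierstrass-type norm construction, the failure of the analytic-flex argument outside Euclidean space, and the Connelly--Servatius third-order phenomenon). Your proposal essentially elaborates that same heuristic programme, and by your own admission it stops exactly at the step that would constitute the proof: producing one convex norm that (a) agrees with a smooth reference to second order at each of the finitely many edge vectors, (b) has the prescribed pathological higher-order behaviour, and (c) admits an explicit equivalent, non-congruent family of placements arbitrarily close to $p$, i.e.\ a path along which \emph{every} edge length is exactly preserved. Until that coordination step is carried out and the exact length-preservation of the candidate flex is verified, nothing has been proved; the conjecture remains open after your argument just as it does in the paper.

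There is also a concrete step in your outline that fails as written. You claim second-order rigidity ``carries over from the smooth model because only $\varphi_x$ and $\Delta_x$ enter the second-order flex condition.'' Matching $\varphi$ and $\Delta$ at the edge vectors does preserve the second-order flex \emph{equations}, but second-order rigidity also requires that every admissible $u$ lie in $\mathcal{T}(p)$, and $\mathcal{T}(p)$ is determined by the isometry group of the whole norm, not by local derivative data. If your smooth model is Euclidean, the modified norm will generically lose the rotations, so a Euclidean rotational flex (which extends to a second-order flex, since the defining equations are unchanged) becomes a \emph{non-trivial} second-order flex of the new framework, destroying second-order rigidity. To avoid this you must start from a non-Euclidean smooth model (e.g.\ $\ell_p$, $p>2$, where $\mathcal{T}(q)$ is just the translations) and then show that the construction of \cite{govc} can be made to match an $\ell_p$ norm to second order at the prescribed directions while remaining convex and nowhere thrice-differentiable; neither compatibility is established in your sketch. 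Your proposed weakening (thrice-differentiable only off a meagre set) does not obviously help, since the candidate flex would still have to thread its edge directions through the bad set while solving the exact equivalence constraints.
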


Some evidence to back \Cref{conj1} stems from that fact that even in Euclidean spaces,
third-order rigidity does not imply local or continuous rigidity \cite{connserv}.
This is due to a key property that holds for second-order rigidity but not for third-order rigidity;
namely, if $(u^{1},\ldots,u^{2n})$ is a $2n^{\text{th}}$-order infinitesimal flex of a second-order rigid framework in a Euclidean space, then there exists a rigid body motion $\gamma$ of the framework with $\gamma^{(i)}(0) = u^i$ for each $i \in \{1,\ldots,n\}$ \cite{conn80}.
Since this property cannot be used in general normed spaces due to the lack of smoothness of general norms.
Combined with the fact that being rigid with respect to analytic flexes does not imply local or continuous rigidity in general normed spaces (for example, see \Cref{fig2}(ii)),
it seems logical that \Cref{conj1} should be true.

\subsection*{Acknowledgements}
The author would like to thank Derek Kitson for his helpful advice and suggestions for the paper.


\begin{thebibliography}{1}
\bibitem{abbott} T.~Abbott, {\em Generalizations of Kempe’s universality theorem}, Master’s thesis, Massachusetts Institute of Technology (2008).

\bibitem{alex} A.D.~Alexandroff, {\em Almost everywhere existence of the second differential of a convex function and some properties of convex surfaces connected with it}, (Russian) Leningrad State Univ. Annals [Uchenye Zapiski] Math. Ser. 6 (1939) 3--35.

\bibitem{asi-rot} L.~Asimow, B.~Roth, {\em The rigidity of graphs}, Transactions of the American Mathematical Society 245, 279--289 (1978). \href{https://doi.org/10.2307/1998867}{doi:10.2307/1998867}

\bibitem{ball90} K.~Ball, {\em Isometric embedding in $\ell_p$-spaces}, Europ. J. Combinatorics 11, 305--311 (1990). \href{https://doi.org/10.1112/blms.12336}{doi:10.1112/blms.12336}

\bibitem{btv} G.~Butler, J.~Timourian, C.~Viger, {\em The rank theorem for locally lipschitz continuous functions}, Canadian Mathematical Bulletin 31(2) (1988), 217--226. \href{https://doi.org/10.4153/CMB-1988-034-8}{doi:10.4153/CMB-1988-034-8}

\bibitem{cauchy} A.L.~Cauchy, {\em Sur les polygons et le polyh\'{e}ders} XVIe Cahier IX, 87--89 (1813).

\bibitem{clarke76} F.H.~Clarke, {\em On the inverse function theorem}, Pacific Journal of Mathematics 64(1), 97--102 (1976). \href{https://doi.org/10.2140/pjm.1976.64.97}{doi:10.2140/pjm.1976.64.97}

\bibitem{clarke} F.H.~Clarke, {\em Optimization and Nonsmooth Analysis}, Society for Industrial and Applied Mathematics (1990). \href{https://doi.org/10.1137/1.9781611971309}{doi:10.1137/1.9781611971309}

\bibitem{conn80} R.~Connelly, {\em The rigidity of certain cabled frameworks and the second-order rigidity of arbitrarily triangulated convex surfaces}, Advances in Mathematics 37(3), 272--299 (1980). \href{https://doi.org/10.1016/0001-8708(80)90037-7}{doi:10.1016/0001-8708(80)90037-7}

\bibitem{connserv} R.~Connelly, H.~Servatius, {\em Higher-order rigidity--What is the proper definition?},
Discrete Computational Geometry 11, 193--200 (1994). \href{https://doi.org/10.1007/BF02574003}{doi:10.1007/BF02574003}

\bibitem{conn96} R.~Connelly, W.~Whiteley, {\em Second order rigidity and prestress stability for tensegrity frameworks}, Siam J. Discrete Math. 9(3), 453--491 (1996). \href{https://doi.org/10.1137/S0895480192229236}{doi:10.1137/S0895480192229236}

\bibitem{clm} J.~Cook, J.~Lovett and F.~Morgan, {\em Rotation in a normed plane}, 
The American Mathematical Monthly 114:7, 628--632 (2007). \href{https://doi.org/10.1080/00029890.2007.11920453}{doi:10.1080/00029890.2007.11920453} 

\bibitem{dew2} S.~Dewar, {\em Infinitesimal rigidity in normed planes}, Siam J. Discrete Math. 34(2), 1205--1231 (2020). \href{https://doi.org/10.1137/19M1284051}{doi:10.1137/19M1284051} 

\bibitem{D21} S.~Dewar, {\em Equivalence of continuous, local and infinitesimal rigidity in normed spaces}, Discrete Comput. Geom. 65, 655--679 (2021). \href{https://doi.org/10.1007/s00454-019-00135-5}{doi:10.1007/s00454-019-00135-5} 

\bibitem{dewnixon21} S.~Dewar, A.~Nixon, {\em Generalised rigid body motions in non-Euclidean planes with applications to global rigidity}, online preprint, \href{https://arxiv.org/abs/2108.06484}{arXiv:2108.06484}.

\bibitem{federer} H.~Federer, {\em Geometric measure theory}, Springer-Verlag, New York (1969). \href{https://doi.org/10.1007/978-3-642-62010-2}{doi:10.1007/978-3-642-62010-2} 

\bibitem{fhjv2017} S.~Fiorini, T.~Huynh, G.~Joret, A.~Varvitsiotis, {\em The excluded minors for isometric realizability in the plane}, SIAM Journal on Discrete Mathematics 31(1), 438-453 (2017). \href{https://doi.org/10.1137/16M1064775}{doi:10.1137/16M1064775} 

\bibitem{govc} Dejan Govc (https://math.stackexchange.com/users/19588/dejan-govc), {\em Norm not differentiable on a dense set of $\mathbb{R}^n$}, URL (version: 2017-04-13): \href{https://math.stackexchange.com/q/1327234}{https://math.stackexchange.com/q/1327234}

\bibitem{gss} J.~Graver, B.~Servatius, H.~Servatius, {\em Combinatorial rigidity}, American Mathematical Society, Providence, RI (1993). \href{https://doi.org/10.1090/gsm/002}{doi:10.1090/gsm/002} 

\bibitem{hol78} W.~Holsztynski, {\em $\mathbb{R}^n$ as a universal metric space}, Notices of the Amer. Math. Soc. (3rd ed.), 25 A-367 (1978).

\bibitem{kempe} A.B.~Kempe, {\em On a general method of describing plane curves of the nth degree by linkwork}, Proceedings of the London Mathematical Society s1-7, 213--216 (1875). \href{https://doi.org/10.1112/plms/s1-7.1.213}{doi:10.1112/plms/s1-7.1.213}  

\bibitem{kit} D.~Kitson, {\em Finite and infinitesimal rigidity with polyhedral norms}, Discrete and Computational Geometry, 54:2, 390--411 (2015). \href{https://doi.org/10.1007/s00454-015-9706-x}{doi:10.1007/s00454-015-9706-x}

\bibitem{kit-pow-1} D.~Kitson and S.C.~Power, {\em Infinitesimal rigidity for non-Euclidean bar-joint frameworks}, {\itshape Bulletin of the London Mathematical Society}, 46:4, 685--697 (2014). \href{https://doi.org/10.1112/blms/bdu017}{doi:10.1112/blms/bdu017}

\bibitem{analytic} S.G.~Krantz, H.R.~Parks, {\em A primer of real analytic functions}, Birkh\"{a}user Verlag (1992). \href{https://doi.org/10.1007/978-0-8176-8134-0}{doi:10.1007/978-0-8176-8134-0}

\bibitem{maxwell} J.C.~Maxwell, {\em On the calculation of the equilibrium and stiffness of frames}, Philosophical Magazine 27, 294--299 (1864). Also: Collected papers, XXVI. Cambridge University Press, 1890. \href{https://doi.org/10.1080/14786446408643668}{doi:10.1080/14786446408643668}

\bibitem{petty} C.M.~Petty, {\em Equilateral sets in Minkowski spaces}, Proceedings of the American Mathematical Society 29(2), 369--374 (1971). \href{https://doi.org/10.2307/2038145}{doi:10.2307/2038145}

\bibitem{power20} S.C.~Power, {\em Non-Euclidean braced grids}, Mathematical Proceedings of the Royal Irish Academy, 120A(2), 43--57 (2020). \href{https://doi.org/10.3318/pria.2020.120.06}{doi:10.3318/pria.2020.120.06}

\bibitem{minkowski} A.C.~Thompson, {\em Minkowski geometry}, Encyclopedia of Mathematics and its Applications, Cambridge University Press (1996). \href{https://doi.org/10.1017/CBO9781107325845}{doi:10.1017/CBO9781107325845}

\bibitem{SithWill} M.~Sitharam, J.~Willoughby, {\em On flattenability of graphs}, Proceedings of the ADG (2015). \href{https://doi.org/10.1007/978-3-319-21362-0_9}{doi:10.1007/978-3-319-21362-0{\_}9}


\bibitem{wit86} H.S~Witsenhausen, {\em Minimum dimension embedding of finite metric spaces}, Journal of Combinatorial Theory, Series A, 42(2), 184--199 (1986). \href{https://doi.org/10.1016/0097-3165(86)90089-0}{doi:10.1016/0097-3165(86)90089-0}
\end{thebibliography}
\end{document}